\numberwithin{equation}{section}
\theoremstyle{plain}
\newtheorem{theorem}{Theorem}[section]
\newtheorem{lem}[theorem]{Lemma}
\theoremstyle{definition}
\newtheorem{df}[theorem]{Definition}
\newtheorem{rem}[theorem]{Remark}
\def\Int{\mathop\mathrm{Int}}
\def\dist{\mathop\mathrm{dist}}
\begin{document}
	
	\title{Tangential homoclinic points for Lozi maps}
	\author{Kristijan Kilassa Kvaternik}
	\address[K. Kilassa Kvaternik]{
		University of Zagreb,
		Faculty of Electrical Engineering and Computing,
		Department of Applied Mathematics,
		Unska 3,
		10\,000 Zagreb,
		Croatia -- and -- Jagiellonian University,
		Institute of Mathematics,
		ul.\ prof.\ Stanis{\l}awa {\L}ojasiewicza 6,
		30-348 Kraków,
		Poland}
	\urladdr{\url{https://www.fer.unizg.hr/en/kristijan.kilassa_kvaternik}}
	\email{kristijan.kilassakvaternik@fer.unizg.hr}
	\thanks{This work was supported in part by the Croatian Science Foundation grant HRZZ-MOBODL-2023-08-4960, and in part by the Croatian Science Foundation grant HRZZ-IP-2022-10-9820 GLODS}
		
	\date{December 17, 2024}
	
	\subjclass[2020]{37E30, 37D05}
	\keywords{Lozi map, homoclinic points, homoclinic tangency}
	
	\begin{abstract}
		
	For the family of Lozi maps, we study homoclinic points for the saddle fixed point $X$ in the first quadrant. Specifically, in the parameter space, we examine the boundary of the region in which homoclinic points for $X$ exist. For all parameters on that boundary, all intersections of the stable and unstable manifold of $X$, apart from $X$, are tangential, or these manifolds intersect along a segment. We ultimately prove that for such parameters, all possible homoclinic points for $X$ are iterates of two special points $Z$ and $V$, or iterates of points on a segment joining $V$ with an iterate of $Z$. Additionally, we describe the parameter curves that form the boundary and provide explicit equations for several of them.   
		
	\end{abstract}

	\maketitle	
	
	\baselineskip=18pt
	
	\section{Introduction}\label{sec:intro}

	With the intention of providing a map that would be an analogue of the H\'{e}non map, but at the same time possess simpler properties, R.\ Lozi introduced in his 1978 paper \cite{lozi1978attracteur} a two-parameter family of homeomorphisms
	\begin{equation*}
	L_{a,b}\colon\mathbb{R}^2\rightarrow\mathbb{R}^2,\quad L_{a,b}(x,\:y)=(1+y-a|x|,\: bx),
	\end{equation*}
where $a,b\in\mathbb{R}$ are the parameters. This family has since been known as the \emph{Lozi map family}.

	Despite the piecewise af\mbox{}fine nature of the Lozi map, it turned out that it induces complicated dynamics. Throughout the studies of that dynamics, in the case when $b>0$, much attention has been paid to the stable and unstable manifold of the fixed point $X$ in the first quadrant, $W_X^s$ and $W_X^u$. These notions are formally introduced in the next section. In this paper, we are interested in points at which $W_X^s$ and $W_X^u$ intersect. Such points are called \emph{homoclinic points} for the fixed point $X$. In general, the presence of homoclinic points indicates complex dynamical behavior which is often linked to chaos. 
	
	Specifically, we focus on the border case of the existence of homoclinic points for $X$. More formally, in the parameter space, let $\mathcal{H}$ denote the set of all parameters $(a,b)$, $a>0$, $0<b<1$, such that the Lozi maps $L_{a,b}$ exhibit homoclinic points for $X$. We study $\partial\mathcal{H}$, the boundary of $\mathcal{H}$, which we call the \emph{boundary of existence of homoclinic points} for $X$. A part of this boundary is represented in Figure \ref{figure:homoclinic_border_curves}. As it will follow from Lemma \ref{lemma:hom_tang}, for parameter values on the boundary $\partial\mathcal{H}$, all homoclinic intersections of $W_X^s$ and $W_X^u$ are either \emph{tangential}, which is illustrated in Figure \ref{figure:homoclinic_border_WsWu_config}, or $W_X^s$ and $W_X^u$ intersect along a segment as in Figure \ref{fig:homoclinic_segments_example}. Here, the notions of tangential and transverse intersections will be readjusted since $W_X^s$ and $W_X^u$ are polygonal lines and therefore not dif\mbox{}ferentiable curves. In the main result of the paper, we determine the set of homoclinic points for $X$ in the border case. In this context, the two distinct points $Z$ and $V$, at which $W_X^u$ and $W_X^s$, respectively, intersect the $x$- and $y$-axis for the first time, starting from $X$, proved to be of particular importance. For an exact definition of the points $Z$ and $V$, we refer the reader to the next section; specially, Figure \ref{fig:Lozi_stable_unstable_Z_V} illustrates several iterates of these points as an aid for visual intuition.
	
	\begin{theorem} \label{thm:hom_T0V0}
  		For parameters on the boundary $\partial\mathcal{H}$, the set of all homoclinic points for $X$ is one of the following:
  		\begin{enumerate}
  		\item the orbit of $Z$,
  		\item the orbit of $V$,
  		\item the union of orbits of $Z$ and $V$,
  		\item the union of orbits of points on a segment joining $V$ with an iterate of $Z$.
  		\end{enumerate}
	\end{theorem}
	
	A rather surprising fact here is that $W_X^s$ and $W_X^u$ can also intersect along a segment of points, even more so since this cannot happen for parameter pairs $(a,b)$ such that $0<b<1$, $a>0$ and $a>b+1$; see Remark \ref{rem:misiurewicz_no_segments}.
	
	It is worth mentioning that our main theorem, in a way, represents a counterpart to the result of Ishii in \cite{ishii1997towards1}, where the first tangency problem was solved. There, the author showed that for all parameters on the boundary of the maximal entropy region (the region where $L_{a,b}$ is conjugate to the full shift on two symbols), $L_{a,b}$ has a heteroclinic tangency for $b>0$, i.e., a homoclinic tangency for $b<0$, and these tangencies are iterates of a special tangency on the $x$-axis in the phase space. In the parameter space, as we leave the maximal entropy region and move along the positive $a$-axis, from greater to smaller values, at one moment we pass the boundary of existence of homoclinic points for $X$, $\partial\mathcal{H}$, and enter the region where these points cease to exist. Therefore, the points stated in Theorem \ref{thm:hom_T0V0} can be considered as the \emph{points of last tangency} for $L_{a,b}$.
	
	Moreover, the boundary $\partial\mathcal{H}$ also appears in the studies of the \emph{zero entropy locus} of $L_{a,b}$. Firstly, Yildiz provided in \cite{yildiz2011monotonicity} partial results about zero topological entropy $h_{top}$ for $L_{a,b}$. In addition, the author numerically observed a region in the parameter space in which $h_{top}(L_{a,b})=0$. That region is in part bounded by $\partial\mathcal{H}$, and the author remarks that this boundary is expected to be piecewise algebraic. This last claim is justified by the fact that the curves forming $\partial\mathcal{H}$ represent intersections of $W_X^s$ and $W_X^u$, which are piecewise linear. This is explained in greater detail in Section \ref{sec:examples_bd_curves}.
	
	Secondly, very recent results of Misiurewicz and \v{S}timac in \cite{misiurewicz2024zero} expand those of Yildiz and show that $h_{top}(L_{a,b})=0$ in a large region of parameters that is contained in the complement of $\mathcal{H}$. Thirdly, Burns and Weiss proved in \cite{burns1995geometric} that a dif\mbox{}feomorphism possessing a transverse homoclinic point has positive topological entropy. 
	
	Here, it is important to mention that in a private correspondence, Y.\ Ishii and D.\ Sands brought to the author's attention the existence of parameter pairs for which the Lozi map exhibits transverse homoclinic points for period 6 saddle points. Even though these findings indicate that the zero entropy locus of $L_{a,b}$ is a proper subset of the complement of $\mathcal{H}$, the study of the dynamical behavior near $\partial\mathcal{H}$ will be of relevance in determining the exact zero entropy locus of the Lozi family.
	
	Finally, the aforementioned result of Burns and Weiss also demonstrates another incentive to study $\partial\mathcal{H}$: namely, homoclinic points have so far been an object of research mainly in the context of di\-f\mbox{}fe\-o\-mor\-phisms, that is, one required some degree of dif\mbox{}ferentiability. This work thus presents a step towards a comparable theory for homeomorphisms of the plane.
	
	This paper is organized as follows: in Section \ref{sec:prelim}, we review the preliminary notions on Lozi maps and give an overview of the notation. The following Section \ref{sec:tan_hom_pts} is divided into three parts. In Subsection \ref{subsec:stable_struct}, we describe the zigzag structure that the stable manifold $W_X^s$ forms in the third quadrant of the plane. Subsection \ref{subsec:border_hom_pts} is dedicated to tangential homoclinic points. The proof of the main theorem is concluded in Subsection \ref{subsec:hom_intersec_segment}, where we consider homoclinic intersections along segments. Lastly, in Section \ref{sec:examples_bd_curves}, we describe the curves that form the boundary of existence of homoclinic points for $X$ and explicitly state the equations for some of them in Appendix \ref{appendix:curves_Cn_equations}.
	
	This paper is based on Chapter 2 of the author's PhD dissertation \cite{kilassa2022tangential}. The author of the paper would like to thank S.\ \v{S}timac for the support, reading the manuscript, and giving valuable suggestions for its improvement, as well as J.\ Boro\'nski for the constructive discussions related to specific proofs in this work. The author also wishes to thank Y.\ Ishii and D.\ Sands for pointing out new findings that refine the current understanding of the zero entropy locus of the Lozi family, and, by extension, the role of the boundary $\partial\mathcal{H}$ in this context. Finally, the author wishes to express his sincere gratitude to the anonymous reviewer whose constructive comments strengthened the argumentation of this paper and led to careful considerations of homoclinic intersections along a segment.

	\section{Preliminaries}\label{sec:prelim}

	\subsection{Lozi maps}\label{subsec:lozi_maps}
	
	To study homoclinic points in the case when the Lozi map $L_{a,b}$ is orientation-reversing (i.e., $b>0$), one can observe parameter pairs $(a,b)$ for which $0 < b < 1$ and $a+b > 1$. Namely, if $L_{a,b}$ is a Lozi map with $b>1$, then it is conjugate to $L^{-1}_{a/b,\,1/b}$. Moreover, if $b-1 \geqslant a$, then $L_{a,b}$ does not have any fixed points. If $b-1 < a \leqslant 1-b$, then $L_{a,b}$ has one fixed point $X$ in the first quadrant. This point is attracting for $a < 1-b$, and it is not hyperbolic when $a = 1-b$. 

	For parameter pairs $(a,b)$ such that $0<b<1$ and $a+b>1$, the Lozi map $L_{a,b}$ has two hyperbolic saddle fixed points, $X=\bigl(\frac{1}{1+a-b},\:\frac{b}{1+a-b}\bigr)$ in the first and $Y=\bigl(\frac{1}{1-a-b},\:\frac{b}{1-a-b}\bigr)$ in the third quadrant. The eigenvalues of the dif\mbox{}ferential of $L_{a,b}$ at $X$ are
	\begin{equation*}
	\lambda_X^u = \tfrac{1}{2}\left(-a-\sqrt{a^2+4b}\right),\quad \lambda_X^s = \tfrac{1}{2}\left(-a+\sqrt{a^2+4b}\right),
	\end{equation*} 
while those at $Y$ are given by
	\begin{equation*}
	\lambda_Y^u = \tfrac{1}{2}\left(a+\sqrt{a^2+4b}\right),\quad \lambda_Y^s = \tfrac{1}{2}\left(a-\sqrt{a^2+4b}\right).
	\end{equation*}
Observe that $\lambda_X^u<-1$, $0<\lambda_X^s<1$ and $\lambda_Y^u>1$, $-1<\lambda_Y^s<0$. Moreover, for every eigenvalue $\lambda$, the corresponding eigenvector is given by $\binom{\lambda}{b}$.

	For every point $A \in \mathbb{R}^2$ and every $k \in \mathbb{Z} \setminus \{0\}$, we put $A^k=L_{a,b}^k(A)$; specially, $A^0=A$. Recall that the set $\{A^k \colon k \in \mathbb{Z}\}$ is called the \emph{orbit} of $A$. Moreover, a set $\mathcal{B} \subseteq \mathbb{R}^2$ is $L_{a,b}$-\emph{invariant} if $L_{a,b}(\mathcal{B}) \subseteq \mathcal{B}$. An $L_{a,b}^{-1}$-\emph{invariant} set is defined analogously.

	Recall that the \emph{unstable manifold} of $X$ is the set $W_X^u = \{T \in \mathbb{R}^2 \colon T^{-n} \overset{n \rightarrow \infty}{\longrightarrow} X\}$. Similarly, the \emph{stable manifold} of $X$ is $W_X^s = \{T \in \mathbb{R}^2 \colon T^n \overset{n \rightarrow \infty}{\longrightarrow} X\}$. We know that $W_X^u$ and $W_X^s$ are $L_{a,b}$- and $L_{a,b}^{-1}$-invariant sets which contain $X$. As already explained in \cite{misiurewicz1980strange}, $W_X^u$ and $W_X^s$ are not manifolds in the true meaning of that notion since they are polygonal lines in the plane; see Figure \ref{fig:Lozi_stable_unstable_Z_V}. We nevertheless follow the established terminology and call them manifolds.

	\begin{figure}
	\begin{center}
		\includegraphics[width=\linewidth]{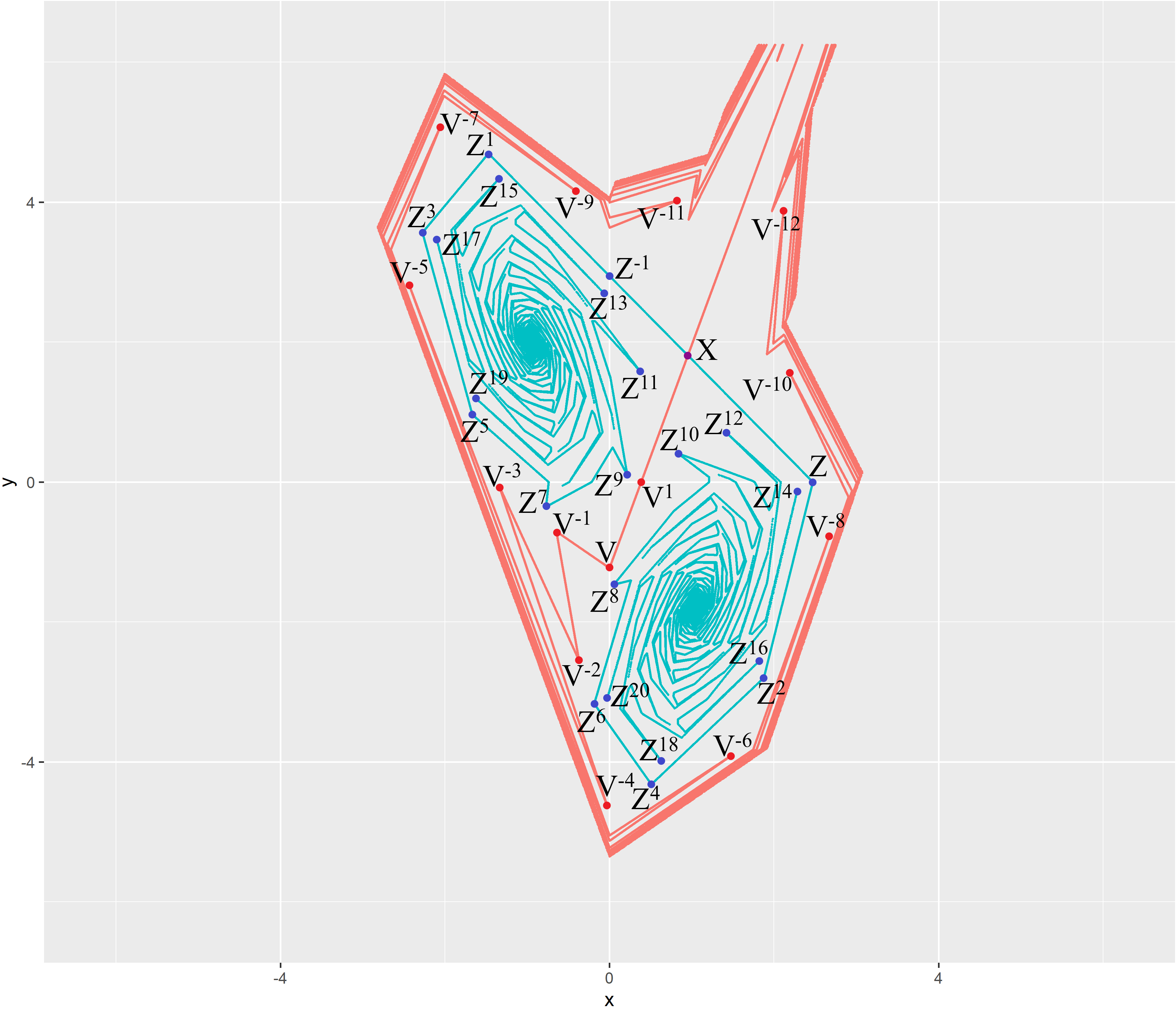}
	\end{center}
	\caption{The stable (red) and unstable (blue) manifold of $X$ for parameter values $a=1$, $b=0.95$, together with some iterates of $Z$ and $V$.}
	\label{fig:Lozi_stable_unstable_Z_V}
	\end{figure}
	
	Observe the unstable manifold $W_X^u$. We denote the half of that manifold that starts at $X$ and goes to the right by $W_X^{u+}$. That half intersects the positive $x$-axis for the first time at the point
	\begin{equation}
	Z=\left(\frac{2+a+\sqrt{a^2+4b}}{2(1+a-b)},\: 0\right)=\left(\frac{2}{2+a-\sqrt{a^2+4b}},\: 0\right).
	\label{eq:Z}
	\end{equation}
The other half, starting at $X$ and going to the left, we denote by $W_X^{u-}$. Note that
	\begin{equation*}
	W_X^{u+} = \{X\}\cup\bigcup_{n=-\infty}^{\infty}L_{a,b}^{2n}\bigl(\overline{ZZ^2}\bigr) ,\quad W_X^{u-} = \{X\}\cup\bigcup_{n=-\infty}^{\infty}L_{a,b}^{2n}\bigl(\overline{Z^{-1}Z^1}\bigr).
	\end{equation*}
Similarly, we denote by $W_X^{s-}$ the half of the stable manifold $W_X^s$ that starts at $X$ and goes down. That part intersects the negative $y$-axis for the first time at the point
	\begin{equation}
	V=\left(0,\: \frac{2b - a - \sqrt{a^2 + 4b}}{2 ( 1 + a - b)}\right)=\left(0,\: -\frac{2b}{-a+2b+\sqrt{a^2+4b}}\right).
	\label{eq:V}
	\end{equation}
We have
	\begin{equation*}
	W_X^{s-} = \{X\}\cup\bigcup_{n=-\infty}^{\infty}L_{a,b}^{-n}\bigl(\overline{VV^1}\bigr).
	\end{equation*}
The other half of $W_X^s$ is a half-line emanating from $X$ and going up in the first quadrant. That part we denote by $W_X^{s+}$.

	As already stated, $W_X^{u}$ and $W_X^{s}$ are polygonal lines in the plane. We are interested in the points at which these lines break, i.e., the endpoints of maximal straight line segments contained in $W_X^{u}$ and $W_X^{s}$. Adopting the terminology from \cite{boronski2023densely}, we call such points \emph{post-critical points} if they lie on $W_X^u$ and \emph{V-points} if they are on $W_X^s$.

	\subsection{Notation}\label{subsec:notation}
	
	Since we are working in the Euclidean plane, we will use the following notation to denote geometrical objects and their topological characteristics.
	\begin{itemize}
	\item The first, second, third and fourth quadrant of the Cartesian coordinate system will be denoted by $\mathcal{Q}_1$, $\mathcal{Q}_2$, $\mathcal{Q}_3$ and $\mathcal{Q}_4$, respectively.
	\item Points in the plane will be denoted by capital Latin letters, possibly with indices: $A,B,C,\ldots,A_1,B_1,C_1,\ldots$ One exception is $L_{a,b}$ which will denote the Lozi map. 
	\item Specially, $X$ and $Y$ denote the fixed points of $L_{a,b}$. Moreover, $Z$ and $V$ will denote points on $W_X^u$ and $W_X^s$, respectively, as defined in Subsection \ref{subsec:lozi_maps}; see Figure \ref{fig:Lozi_stable_unstable_Z_V}.
	\item For a point $A\in\mathbb{R}^2$, $A_x$ and $A_y$ are the $x$- and $y$-coordinate of $A$, respectively.
	\item For $A,B\in\mathbb{R}^2$, the straight line segment with endpoints $A$ and $B$ will be denoted by $\overline{AB}$.
	\item For $A\in\mathbb{R}^2$ and $\varepsilon>0$, $B_{\varepsilon}(A)$ represents the open ball in the plane centered at $A$ of radius $\varepsilon$.
	\item Lowercase Greek letters $\alpha,\beta,\gamma,$ etc.\ will stand for line segments, straight or polygonal ones.
	\item Capital script letters $\mathcal{A},\mathcal{B},$ etc.\ will denote two-dimensional subsets of the plane, typically polygons.
	\item For $\mathcal{A}\subset\mathbb{R}^2$, we will use the following notation:
		\begin{itemize}
			\item $\Int\mathcal{A}\ \ldots$ interior of $\mathcal{A}$,
			\item $\partial\mathcal{A}\ \ldots$ boundary of $\mathcal{A}$.
		\end{itemize}
	\end{itemize}			
	
	Finally, we will use specific notation concerning the Lozi map and the stable and unstable manifolds $W_X^s$ and $W_X^u$.
	\begin{itemize}
		\item For every $k\in\mathbb{Z}$ and every point $A \in \mathbb{R}^2$, we put $A^k=L_{a,b}^{k}(A)$. Specially, $A^{0} = A$.
		\item For points $A,B\in W_X^u$, we put:
			\begin{itemize}
				\item $[A,B]^{u}\subset W_X^u\ \ldots$ polygonal line lying on $W_X^{u}$ with $A$ and $B$ as endpoints,
				\item specially, if $[A,B]^{u}$ is a straight line segment, we denote it by $\overline{AB}^{u}$,
				\item $[A,B)^{u}:=[A,B]^{u}\setminus\{B\}$,
				\item $(A,B]^{u}:=[A,B]^{u}\setminus\{A\}$,
				\item $(A,B)^{u}:=[A,B]^{u}\setminus\{A,B\}$.
			\end{itemize}
		\item For $A,B\in W_X^s$, we define the sets $[A,B]^{s}$, $\overline{AB}^{s}$, $[A,B)^{s}$, $(A,B]^{s}$ and $(A,B)^{s}$ analogously.			 
	\end{itemize}

	\section{Boundary homoclinic points}\label{sec:tan_hom_pts}
	
	We will investigate homoclinic points for the fixed point $X$ of the Lozi map; more precisely, we will describe the boundary of the area of their existence in the parameter space. Recall that a point $T$, $T\neq X$, is said to be a \emph{homoclinic point} for $X$ if $T$ is contained in the intersection of its stable and unstable manifold, $T\in W^{s}_{X}\cap W^{u}_{X}$. The main result is that in the border case, all homoclinic points for $X$ are iterates of the points $Z$ and/or $V$; see Theorem \ref{thm:hom_T0V0}. In this section we will observe parameter pairs $(a,b)$ such that $0<b<1$ and $a+b>1$.

	\subsection{Structure of the stable manifold}\label{subsec:stable_struct}
	
	In this subsection, we will describe in greater detail the shape that the stable manifold $W_X^{s}$ forms in the third quadrant which will give us some insight into the order of appearance of V-points on it. The following technical lemma will be useful during that analysis.
	
	\begin{lem}[Some geometric properties of $L_{a,b}$ and $L_{a,b}^{-1}$]\label{lemma:lozi_geom}\mbox{}
	\begin{enumerate}
		\item The image of the $y$-axis under $L_{a,b}$ is the $x$-axis and the image of the $x$-axis under $L_{a,b}$ is the curve $x=1-\frac{a}{b}|y|$.
		\item The image of the $x$-axis under $L_{a,b}^{-1}$ is the $y$-axis and the image of the $y$-axis under $L_{a,b}^{-1}$ is the curve $y=a|x|-1$.
		\item Let $\alpha$ be a straight line segment in the lower half-plane which lies on a straight line whose slope equals $s_1$. Then $L_{a,b}^{-1}(\alpha)$ lies on a straight line whose slope equals
		\begin{equation*}
		s_2=b\cdot\frac{1}{s_1}-a.
		\end{equation*}
	\end{enumerate}
	\end{lem}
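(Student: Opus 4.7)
The plan is to prove all three claims by direct computation, the key preparatory step being an explicit formula for $L_{a,b}^{-1}$.

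For part (1), I would substitute $x=0$ and then $y=0$ into $L_{a,b}(x,y) = (1+y-a|x|,\,bx)$. The image of the $y$-axis is $\{(1+y,0) : y\in\mathbb{R}\}$, manifestly the $x$-axis, while the image of the $x$-axis is the parametric curve $\{(1-a|x|,\,bx) : x\in\mathbb{R}\}$; since $b\neq 0$, I can eliminate the parameter via $x = y'/b$ to obtain $x' = 1 - \tfrac{a}{b}|y'|$.

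For parts (2) and (3), I first invert $L_{a,b}$. Given $(u,v) = (1+y-a|x|,\,bx)$, the second coordinate yields $x = v/b$ at once, and back-substitution into the first gives $y = u-1+\tfrac{a}{b}|v|$, so
\[
L_{a,b}^{-1}(u,v) = \Bigl(\tfrac{v}{b},\ u-1+\tfrac{a}{b}|v|\Bigr).
\]
Part (2) then follows by the same axis-substitution strategy: $v=0$ gives $(0,u-1)$, the $y$-axis, while $u=0$ together with $v = bx'$ yields $y' = a|x'| - 1$.

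For part (3), I would exploit the lower-half-plane hypothesis to remove the absolute value: there $|y| = -y$, so $L_{a,b}^{-1}$ restricted to $\alpha$ is genuinely affine rather than merely piecewise affine. Parameterizing the supporting line by $y = s_1 x + c$ and writing $(X',Y') = L_{a,b}^{-1}(x,y) = (y/b,\,x-1-\tfrac{a}{b}y)$, I would invert these relations to get $y = bX'$ and $x = Y' + 1 + aX'$, substitute into $y = s_1 x + c$, and read off the coefficient of $X'$, obtaining $s_2 = b/s_1 - a$. The degenerate case $s_1=0$ would be recorded separately: for horizontal $\alpha$ the image $L_{a,b}^{-1}(\alpha)$ is vertical, consistent with the formula in the obvious limiting sense.

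None of these calculations is deep, so there is no substantial obstacle. The only point requiring care is the role of the lower-half-plane hypothesis in part (3): without it $\alpha$ could cross the $x$-axis, $L_{a,b}^{-1}(\alpha)$ would break at that crossing, and a single output slope could not be assigned.
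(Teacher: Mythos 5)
Your computations are all correct: the explicit inverse $L_{a,b}^{-1}(u,v)=\bigl(\tfrac{v}{b},\,u-1+\tfrac{a}{b}|v|\bigr)$ is right, the axis images follow by direct substitution, and the slope formula $s_2=\tfrac{b}{s_1}-a$ falls out of the affine restriction of $L_{a,b}^{-1}$ to the lower half-plane exactly as you describe. The paper states this lemma without proof, treating it as a routine verification, and your argument is precisely the intended one; your added remarks on why the lower-half-plane hypothesis is needed and on the degenerate case $s_1=0$ are correct and harmless.
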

	
	We know that $V$ is the intersection of the $y$-axis and a straight line through $X$ whose slope equals $s_0=\frac{b}{\lambda_X^s}=\frac{1}{2}\left(a+\sqrt{a^2+4b}\right)$. Furthermore,
	\begin{equation*}
	W_X^{s-}=\overline{XV^1}^{s}\cup\bigcup_{n=0}^{\infty}L_{a,b}^{-n}\bigl(\overline{VV^1}^{s}\bigr),
	\end{equation*}
	so we see that the part of $W_X^{s-}$ which does not contain $\overline{XV}^{s}$ consists of preimages of a line segment lying in the lower half-plane. Notice that Lemma \ref{lemma:lozi_geom}(3) gives a recurrence for the slopes of certain parts of those preimages.
	
	\begin{lem}\label{lem:lozi_recurr_coef}
	The sequence $(s_n)_{n\in\mathbb{N}_0}$ given by the recurrence
	\begin{equation}\label{eg:recurr}
	s_{n+1}=b\cdot\frac{1}{s_n}-a,\ n\geqslant0;\quad s_0=\tfrac{1}{2}\left(a+\sqrt{a^2+4b}\right),
	\end{equation} 
	has the following properties:
	\begin{enumerate}
	\item if $s_{n_1}>0$ for some $n_1\in\mathbb{N}_0$, then $s_n>0$ for all $n\leqslant n_1$,
	\item if $s_{n_2}<0$ for some $n_2\in\mathbb{N}_0$, then $s_n<0$ for all $n\geqslant n_2$,
	\item for all $n\in\mathbb{N}_0$, if $s_{2n+1}>0$, then $s_{2n+2}>0$,
	\item $(s_n)$ converges and $\displaystyle\lim_{n\rightarrow\infty}s_n=\tfrac{1}{2}\left(-a-\sqrt{a^2+4b}\right)$.  
	\end{enumerate}
	\end{lem}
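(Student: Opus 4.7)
The plan is to view the recurrence as the iteration $s_{n+1} = f(s_n)$ of the map $f(x) = b/x - a$ and to exploit the dynamics of $f$ on $\mathbb{R}$. Its fixed points are the roots of $x^2 + ax - b = 0$, namely $\mu_1 := \tfrac{1}{2}(-a+\sqrt{a^2+4b}) > 0$ and $\mu_2 := \tfrac{1}{2}(-a-\sqrt{a^2+4b}) < 0$, and the prescribed initial value satisfies $s_0 = -\mu_2 = \mu_1 + a$; in particular $s_0 > \mu_1$, and a brief calculation also gives $\mu_1 < b/a$.

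Parts (1) and (2) should be quick. For (1), I would invert the recurrence to $s_n = b/(s_{n+1}+a)$; if $s_{n+1} > 0$ then $s_{n+1}+a > 0$ (since $a > 0$), so $s_n > 0$, and backward induction from $n_1$ closes the claim. For (2), $s_n < 0$ forces $s_{n+1} = b/s_n - a < -a < 0$, and forward induction from $n_2$ finishes.

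For (3) I would prove simultaneously with the auxiliary statement ``if $s_{2n} > 0$ then $s_{2n} > \mu_1$'' by induction on $n$. The base case is $s_0 > \mu_1$. For the step, assume $s_{2n+1} > 0$; by (1), $s_{2n} > 0$, and by the hypothesis $s_{2n} > \mu_1$. Since $f$ is strictly decreasing on $(0, \infty)$, $s_{2n+1} = f(s_{2n}) < f(\mu_1) = \mu_1$, so $s_{2n+1} \in (0, \mu_1)$. One more application of $f$ yields $s_{2n+2} = f(s_{2n+1}) > f(\mu_1) = \mu_1 > 0$, which both proves (3) for $n$ and promotes the auxiliary statement to $n+1$.

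For (4), the key identity is $f^2(x) - x = a(x-\mu_1)(x-\mu_2)/(b-ax)$. It shows $f^2(x) > x$ on $(\mu_1, b/a)$, so if every $s_n$ were positive, then $(s_{2n})$ would be strictly increasing and bounded above by $b/a$ (else $s_{2n+1}$ would be non-positive), forcing its limit to be a fixed point of $f^2$ in $(\mu_1, b/a]$; since $\mu_1, \mu_2$ are the only fixed points, this is impossible. Hence $s_N < 0$ for some $N$, and by (2) all later terms are negative. The same identity gives $f^2(x) > x$ on $(-\infty, \mu_2)$ and $f^2(x) < x$ on $(\mu_2, 0)$; combined with $f^2$ being strictly increasing on $(-\infty, 0)$ with unique fixed point $\mu_2$, this yields monotone convergence $s_{N+2k} \to \mu_2$, and passing $f$ once more gives $s_{N+2k+1} \to \mu_2$ as well. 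The main obstacle is part (4), specifically ruling out indefinite positivity, which rests on the absence of genuine period-$2$ orbits of $f$.
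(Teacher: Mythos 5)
Your proof is correct, but it takes a genuinely different route from the paper's. The paper linearizes the M\"obius recurrence by the substitution $j_n=\frac{s_n-M_1}{s_n-M_2}$ (with $M_1=\mu_2$ and $M_2=\mu_1$ in your notation), which turns (\ref{eg:recurr}) into the geometric recursion $j_{n+1}=\mu j_n$, $\mu=M_2/M_1\in(-1,0)$; parts (3) and (4) then fall out of the explicit formula $j_n=\mu^n j_0$ (the sign alternation of $j_n$ gives (3), and $j_n\to 0$ gives $s_n\to M_1$). You instead run the real one-dimensional dynamics of $f(x)=b/x-a$ directly: monotonicity of $f$ on $(0,\infty)$, the invariant ``$s_{2n}>\mu_1$ while the orbit stays positive,'' and the factorization $f^2(x)-x=a(x-\mu_1)(x-\mu_2)/(b-ax)$, which rules out genuine period-two orbits. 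Parts (1)--(3) of your argument are airtight, and your treatment of (3) via the auxiliary invariant is arguably more transparent than the paper's sign computation with $j_{2n+1}<0$. In (4), the only step needing a touch-up is ``forcing its limit to be a fixed point of $f^2$ in $(\mu_1,b/a]$'': the limit-is-a-fixed-point argument uses continuity of $f^2$ at the limit, and $f^2$ has a pole at $b/a$, so the endpoint case $\ell=b/a$ must be excluded separately (if $s_{2n}\to b/a^-$ then $s_{2n+1}=b/s_{2n}-a\to 0^+$, hence $s_{2n+2}\to+\infty$, contradicting boundedness); with that one line added, the monotone-convergence argument for the negative tail goes through exactly as you describe. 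What the paper's substitution buys is a closed form and an explicit convergence rate $|\mu|^n$; what your approach buys is elementarity and the extra structural information $\mu_1<s_{2n}<b/a$ and $0<s_{2n+1}<\mu_1$ on the positive part of the orbit.
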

	
	\begin{proof}
	The first two claims immediately follow from recurrence (\ref{eg:recurr}) and $a,b>0$. In order to prove the third claim, let
	\begin{equation*}
	M_1:=\tfrac{1}{2}\left(-a-\sqrt{a^2+4b}\right),\quad M_2:=\tfrac{1}{2}\left(-a+\sqrt{a^2+4b}\right)
	\end{equation*}
	be the roots of the equation $M^2+aM-b=0$, i.e., the fixed points of the function
	\begin{equation*}
	f\colon\mathbb{R}\setminus\{0\}\rightarrow\mathbb{R},\quad f(x)=b\cdot\frac{1}{x}-a,
	\end{equation*}	 
	which defines recurrence (\ref{eg:recurr}). Because $a,b>0$, we see that $M_1<0$, $M_2>0$ and $|M_2|<|M_1|$. By setting
	\begin{equation*}
	j_n:=\frac{s_n-M_1}{s_n-M_2},\quad n\in\mathbb{N}_0,
	\end{equation*}
	we see that
	\begin{equation*}
	j_{n+1}\stackrel{(\ref{eg:recurr})}{=}\frac{b\cdot\frac{1}{s_n}-a-M_1}{b\cdot\frac{1}{s_n}-a-M_2}=\frac{b\cdot\frac{1}{s_n}-b\cdot\frac{1}{M_1}}{b\cdot\frac{1}{s_n}-b\cdot\frac{1}{M_2}}=\frac{M_2}{M_1}\cdot j_n
	\end{equation*}	 
	for every $n\in\mathbb{N}_0$. If $\mu:=\frac{M_2}{M_1}$, we have $-1<\mu<0$ and $(j_n)_{n\in\mathbb{N}_0}$ satisfies the recurrence
	\begin{equation}
	\label{e:j_n_recurrence}
	j_{n+1}=\mu j_n,\ n\geqslant0;\quad j_0=\tfrac{1}{a}\left(a+\sqrt{a^2+4b}\right)>0.
	\end{equation}
	Therefore, $j_{2n}>0$ and $j_{2n+1}<0$ for all $n\in\mathbb{N}_0$. Now assume that $s_{2n+1}>0$ for some $n\in\mathbb{N}_0$. Due to the fact that
	\begin{equation}
	\label{e:k_nj_n}
	s_n=\frac{M_1-M_2j_n}{1-j_n},\quad n\in\mathbb{N}_0,
	\end{equation}
	and since $j_{2n+1}<0$, i.e., $1-j_{2n+1}>0$, it follows that $M_1-M_2j_{2n+1}>0$. A direct calculation now gives
	\begin{equation*}
	s_{2n+2}\stackrel{(\ref{e:k_nj_n})}{=}\frac{M_1-M_2j_{2n+2}}{1-j_{2n+2}}\stackrel{(\ref{e:j_n_recurrence})}{=}\frac{M_1-M_2\mu j_{2n+1}}{1-\mu j_{2n+1}}=\frac{M_1^2-M_2^2j_{2n+1}}{M_1-M_2j_{2n+1}},
	\end{equation*}
	which combined together with $M_1^2-M_2^2j_{2n+1}>0$ yields $s_{2n+2}>0$. 
	
	Finally, since $\vert\mu\vert<1$, we see that $\displaystyle\lim_{n\rightarrow\infty} j_n=\lim_{n\rightarrow\infty}\mu^{n}j_0=0$. Consequently,
	\begin{equation*}
	\lim_{n\rightarrow\infty}s_n\stackrel{(\ref{e:k_nj_n})}{=}\lim_{n\rightarrow\infty}\frac{M_1-M_2j_n}{1-j_n}=M_1,
	\end{equation*}
	which proves the fourth claim.
	\end{proof}
	
	\begin{figure}[!ht]
	\begin{center}
	\begin{tikzpicture}[auto]
			\tikzstyle{nodec}=[draw,circle,fill=black,minimum size=2pt,
			inner sep=0pt, label distance=2mm]
			\tikzstyle{nodeh}=[draw,circle,fill=white,minimum size=4pt,
			inner sep=0pt]
			\tikzstyle{dot}=[circle,draw=none,fill=none,minimum size=0pt,inner sep=2pt, outer sep=-1pt]

			\draw[->] (-8.5,0)--(4,0) node [below]{$x$};
			\draw[->] (0,-8)--(0,1) node [left]{$y$};
			\node[label={[xshift=-0.2cm, yshift=-0.7cm]$0$}] at (0,0) {};
			
			\coordinate (e0) at (0,0);
			\coordinate (ex) at (1,0);
			\coordinate (ey) at (0,1);
			
			\node[nodec, color=red, label={[right, color=red]$V$}] (v0) at (0,-1) {};
			\node[nodec, color=red, label={[color=red]$V^{-1}$}] (v1) at (-1,-1.25) {};
			\node[nodec, color=red, label={[above, color=red]$V^{-2}$}] (v2) at (-2,-2) {};
			
			\node[nodec, color=red, label={[below right, color=red]$V^{-i_0+2}$}] (vi0m2) at (-3,-3) {};
			\node[nodec, color=red, label={[below, color=red]$V^{-i_0+1}$}] (vi0m1) at (-4,-3.5) {};
			\node[nodec, color=red, label={[color=red]$V^{-i_0}$}] (vi0) at (-6,-2) {};
			\node[nodec, color=red, label={[right, color=red]$V^{-i_0-1}$}] (vi0p1) at (-4.5,-5.5) {};
			\node[nodec, color=red, label={[below, color=red]$V^{-n_0+1}$}] (vn0m1) at (-4.25,-7) {};
			\node[nodec, color=red, label={[color=red]$V^{-n_0}$}] (vn0) at (-8,0.5) {};
			
			\draw[red, thick] (v0) to node[dot]{$\alpha_1$} (v1);
			\draw[red, thick] (v1) to node[dot]{$\alpha_2$} (v2);
			\draw[red, thick, dashed] (v2)--(vi0m2);
			\draw[red, thick] (vi0m2) to node[dot, swap]{$\alpha_{i_0-1}$} (vi0m1);
			\draw[red, thick] (vi0m1) to node[dot, swap]{$\alpha_{i_0}$} (vi0);
			\draw[red, thick] (vi0) to node[dot]{$\alpha_{i_0+1}$} (vi0p1);
			\draw[red, thick, dashed] (vi0p1)--(vn0m1);
			\draw[red, thick] (vn0m1) to node[dot]{$\alpha_{n_0}$} (vn0);
			
			\coordinate (q1) at (intersection of vi0m1--vi0m2 and e0--ex);
			\coordinate (lq1) at (intersection of vi0--vi0m1 and e0--ey);
			
			\node[nodec, label={[above]$T_1$}] at (q1) {};
			\node[nodec, label={[right]$T_1^{-1}$}] at (lq1) {};
			
			\draw[dotted] (vi0m2)--(q1); 
			\draw[dotted] (vi0m1)--(lq1);

			\end{tikzpicture}
	\end{center}
	\caption{The figure illustrates the proof of Lemma \ref{lem:zigzag}. The zigzag part of the stable manifold $W_X^s$ is represented in red.}
	\label{figure:stable_zigzag}
	\end{figure}

	Recall that the $i$th quadrant of the Cartesian coordinate system in the plane is denoted by $\mathcal{Q}_i$, for $i=1,2,3,4$.	
	
	\begin{lem}[Zigzag structure of $W_X^{s}$ in the third quadrant]\label{lem:zigzag}
	There exists a positive integer $n$ such that $V^{-n}$ lies in $\mathcal{Q}_2$. The smallest such positive integer $n_0$ is odd, $[V,V^{-n_0+1}]^{s}$ is contained in $\mathcal{Q}_3$ and all V-points of $W_X^{s}$ on it are negative iterates of $V$.
	\end{lem}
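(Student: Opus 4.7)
I would iterate $L_{a,b}^{-1}$ starting from $V$, tracking in parallel the positions of the iterates $V^{-i}$ and the slopes $\sigma_i$ of the successive straight pieces $\alpha_i:=\overline{V^{-i+1}V^{-i}}^{s}$ of $W_X^{s-}$. As long as $V^{-i+1}\in\mathcal{Q}_3$, the segment $\alpha_i$ lies in the closed lower half-plane, so Lemma \ref{lemma:lozi_geom}(3) gives $\sigma_{i+1}=b/\sigma_i-a$; combined with $\sigma_1=b/s_0-a=s_1$, this identifies the geometric slopes $\sigma_i$ with the algebraic sequence $(s_i)$ of Lemma \ref{lem:lozi_recurr_coef} for every $i$ during which $V^{-1},\dots,V^{-i+1}$ remain in $\mathcal{Q}_3$.

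For the existence of $n_0$, I would observe first that the linear part of $L_{a,b}^{-1}$ on the lower half-plane has a dominant eigenvalue of absolute value greater than $1$ whose corresponding eigendirection has slope $M_1<0$ and points into $\mathcal{Q}_2$, so the iterates $V^{-n}$ cannot remain in $\mathcal{Q}_3$ indefinitely; and secondly that the formula $V^{-i}_x=V^{-i+1}_y/b$ with $V^{-i+1}_y<0$ makes any $V^{-i}$ following a point of $\mathcal{Q}_3$ have negative $x$-coordinate. Hence the first $V^{-n}$ outside $\mathcal{Q}_3$ must land in $\mathcal{Q}_2$, so $n_0:=\min\{n\geqslant 1\colon V^{-n}\in\mathcal{Q}_2\}$ is finite. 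By minimality of $n_0$, the points $V^{0},V^{-1},\ldots,V^{-n_0+1}$ all lie in $\mathcal{Q}_3$, and each straight piece $\alpha_i$ with $1\leqslant i\leqslant n_0-1$ has both endpoints in $\mathcal{Q}_3$ and therefore stays inside $\mathcal{Q}_3$; this yields $[V,V^{-n_0+1}]^{s}\subset\mathcal{Q}_3$.

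The parity of $n_0$ comes from the auxiliary sequence $j_n=(s_n-M_1)/(s_n-M_2)$ used in the proof of Lemma \ref{lem:lozi_recurr_coef}: since $j_{2k}>0$ and $j_{2k+1}<0$, we have $s_{2k}\notin(M_1,M_2)$ while $s_{2k+1}\in(M_1,M_2)$. Writing the condition $V^{-n_0}\in\mathcal{Q}_2$ as $V^{-n_0}_y=V^{-n_0+1}_x-1-aV^{-n_0+1}_y/b>0$ and using the identification $\sigma_{n_0}=s_{n_0}$, one extracts a bound on the sign and magnitude of $s_{n_0}$ that is incompatible with the even-index branch of the above alternation; this forces $n_0$ to be odd.

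Finally, each $V^{-i}$, $0\leqslant i\leqslant n_0-1$, is an honest V-point, because the consecutive slopes $s_{i-1}$ and $s_i$ are distinct: the M\"obius orbit $(s_n)$ cannot hit the fixed points $M_1,M_2$ of the recurrence, as the explicit formula $j_n=\mu^n j_0$ with $\mu\neq 0$ and $j_0\neq 0$ shows $j_n\notin\{0,\infty\}$. Conversely, each open subsegment $(V^{-i+1},V^{-i})^{s}$ is a straight line segment by construction and so contains no V-point in its interior. Thus the V-points on $[V,V^{-n_0+1}]^{s}$ are exactly the iterates $V^{0}=V,V^{-1},\ldots,V^{-n_0+1}$ of $V$. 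The main difficulty will be the parity step, which requires linking the geometric event ``$V^{-n}$ first enters $\mathcal{Q}_2$'' to the combinatorial alternation of $(s_n)$ captured by $j_n$, and not merely to the first sign change of $s_n$.
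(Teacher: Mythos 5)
Your slope identification, your existence argument for $n_0$ (expansion along the eigendirection of slope $M_1$ of the affine branch of $L_{a,b}^{-1}$ in the lower half-plane, plus the observation that $V^{-i}_x=V^{-i+1}_y/b\leqslant 0$ forces the first exit to land in $\mathcal{Q}_2$), the convexity argument for $[V,V^{-n_0+1}]^s\subset\mathcal{Q}_3$, and the V-point claim all essentially follow the paper's route; the paper phrases the expansion in terms of the lengths of the segments $\alpha_n$ growing by a factor $\eta>1$ once their slopes approach $M_1$, which also quietly rules out the degenerate case where the affine iterates would converge to $Y$ — your pointwise version should say explicitly why $V$ has a nonzero component in the expanding direction (e.g.\ because $s_1\neq M_2$, as the $j_n$-orbit never hits the fixed points of the recurrence).

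The parity step, however, is a genuine gap, and it is the heart of the lemma. Your plan is to deduce from $j_{2k}>0$, $j_{2k+1}<0$ that $s_{2k}\notin[M_1,M_2]$ while $s_{2k+1}\in(M_1,M_2)$, and then to show that the exit condition $V^{-n_0}_y=V^{-n_0+1}_x-1-aV^{-n_0+1}_y/b>0$ is ``incompatible with the even-index branch.'' But that inequality constrains only the position of $V^{-n_0+1}$; it says nothing by itself about where $s_{n_0}$ sits relative to $M_1$. Concretely, once one knows $s_{n_0}<0$, the dichotomy to resolve is $s_{n_0}\in(M_1,0)$ (odd index) versus $s_{n_0}<M_1$ (even index), and a segment joining a point of $\mathcal{Q}_3$ to a point of $\mathcal{Q}_2$ can a priori have any negative slope — ruling out $s_{n_0}<M_1$ requires information about the whole preceding zigzag, not just the final crossing. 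Moreover, you have not established $s_{n_0}<0$ at all, and this is itself nontrivial: the paper proves it by locating a V-point $P$ of $[V^{-n_0},V^{-n_0-1}]^s$ on the $y$-axis below $(0,-1)$ and invoking the fact that $W_X^s$ has no self-intersections. The paper's parity argument then runs geometrically: $i_0$, the first index with $s_{i_0}<0$, is odd by Lemma \ref{lem:lozi_recurr_coef}(3), and an induction tracking the points where the lines carrying $\alpha_{n-1}$ and $\alpha_n$ meet the coordinate axes yields $V^{-n}_y>V^{-n+1}_y$ for every odd $n$ with $i_0\leqslant n\leqslant n_0$, which combined with $V^{-n_0}_y>0>V^{-n_0+1}_y$ pins down the parity. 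Some argument of this inductive, geometric kind — linking the positions of the $V^{-n}$ to the signs of the $s_n$ — is exactly what your sketch defers and does not supply.
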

	
	\begin{proof}
	Observe the backward orbit $(V^{-n})_{n\in\mathbb{N}_0}$ of $V$. We know that $V$ lies on the negative $y$-axis. In general, if a point lies in $\mathcal{Q}_3$, then its image under $L_{a,b}^{-1}$ is contained in $\mathcal{Q}_2$ or $\mathcal{Q}_3$ below the curve $y=a\vert x\vert-1$. For every $n\in\mathbb{N}$, let $\alpha_n :=[V^{-n+1},V^{-n}]^{s}$ and notice that $\alpha_{n+1}=L_{a,b}^{-1}(\alpha_n)$. Since $L_{a,b}^{-1}$ acts as an af\mbox{}fine map in the lower half-plane, we see that $\alpha_1=L_{a,b}^{-1}\bigl(\overline{V^1V}^{s}\bigr)$ is a straight line segment, and inductively, if $\alpha_n$ is a straight line segment contained in $\mathcal{Q}_3$, then $\alpha_{n+1}$ is again a straight line segment which may intersect the $x$-axis. If it intersects the $x$-axis, it follows that $\alpha_{n+2}$ (and consequently all $\alpha_{i}$ for $i\geqslant n+2$) intersects both coordinate axes in $\mathcal{Q}_3$; more precisely, its intersection with $\mathcal{Q}_3$ is a straight line segment whose one endpoint lies on the $x$-axis and the other one on the $y$-axis.
	
	In order to prove the first claim, we suppose by contradiction that the polygonal segments $\alpha_n$ are all contained in $\mathcal{Q}_3$. That implies that they are all straight line segments and the slope of each $\alpha_i$ is exactly $s_i$ from Lemma \ref{lem:lozi_recurr_coef}. Since
	\begin{equation*}
	\genfrac(){0pt}{0}{\lambda_Y^s}{b} = \genfrac(){0pt}{0}{\frac{1}{2}\left(a-\sqrt{a^2+4b}\right)}{b} = \tfrac{1}{2}\left(a-\sqrt{a^2+4b}\right)\genfrac(){0pt}{0}{1}{\frac{1}{2}\left(-a-\sqrt{a^2+4b}\right)},
	\end{equation*}
	 it follows that the limit of their slopes (Lemma \ref{lem:lozi_recurr_coef}(4)), $M_1=\frac{1}{2}\left(-a-\sqrt{a^2+4b}\right)$, corresponds to the stable direction of $L_{a,b}$ at its fixed point $Y$ in $\mathcal{Q}_3$. Combining this together with the fact that $L_{a,b}^{-1}$ is continuous, we see that there exist an $n'\in\mathbb{N}_0$ and $\eta>0$ such that
	 \begin{equation*}
	 \frac{\text{length}(\alpha_{n+1})}{\text{length}(\alpha_n)}>\eta>1,
	 \end{equation*}
	 for all $n\geqslant n'$, since $L_{a,b}^{-1}$ stretches all vectors in the direction $\genfrac(){0pt}{1}{\lambda_Y^s}{b}$ by the factor $\frac{1}{|\lambda_Y^s|}>1$ when it acts as an af\mbox{}fine map. The lengths of $\alpha_n$ are thus unboundedly increasing so one of those line segments will eventually intersect the $x$-axis, which is a contradiction with the original assumption that all segments $\alpha_n$ are contained in $\mathcal{Q}_3$. As a consequence, there exists an $n\in\mathbb{N}$ for which $\alpha_n$ intersects the $x$-axis. The smallest $n$ for which that happens yields the desired $n_0$, which proves the first claim of the lemma. 
	 
	From previous discussions it also immediately follows that $[V,V^{-n_0+1}]^{s}$ is contained in $\mathcal{Q}_3$, together with the claim about the  V-points of $W_X^{s}$ lying on it, since $\alpha_1,\alpha_2,\ldots,\alpha_{n_0}$ are all straight line segments.
		
	We now claim that $s_{n_0}<0$. Notice that $V^{-n_0-1}$ lies in the preimage of $\mathcal{Q}_2$, i.e.,\ parts of $\mathcal{Q}_1$ and $\mathcal{Q}_4$ below the curve $y=a|x|-1$, and $[V^{-n_0},V^{-n_0-1}]^{s}$ also has a V-point $P$ on the $y$-axis below $V$. The point $P$ is below the point $(0,-1)$ because $\alpha_{n_0}$ intersects the negative $x$-axis, so we have $[V^{-n_0},V^{-n_0-1}]^{s}=\overline{V^{-n_0}P}^{s}\cup\overline{PV^{-n_0-1}}^{s}$. If $s_{n_0}$ were positive, $\overline{V^{-n_0}P}^{s}$ would intersect $[V,V^{-(n_0-1)}]^{s}$, which is not possible. We thus conclude that $s_{n_0}$ is negative.
	
	Let $i_0$ be the smallest element of $\{1,\ldots,n_0\}$ such that $s_{i_0}<0$. Lemma \ref{lem:lozi_recurr_coef}(3) implies that $i_0$ is odd. Let $T_1$ be the intersection of the $x$-axis and the straight line through $V^{-i_0+2}$ and $V^{-i_0+1}$; see Figure \ref{figure:stable_zigzag}. Because $V^{-i_0+2}$ lies on $\overline{V^{-i_0+1}T_1}$, the points $V^{-i_0+1}$ will lie on $\overline{V^{-i_0}T_1^{-1}}$, i.e.,\ $V_{y}^{-i_0}>V_{y}^{-i_0+1}$. Similarly, if $T_2$ is the intersection of the $x$-axis and the straight line through $V^{-i_0}$ and $V^{-i_0-1}$, then $V^{-i_0-1}$ lies on $\overline{V^{-i_0}T_2^{-1}}$ so $V_{y}^{-i_0}>V_{y}^{-i_0-1}$. 
	
	Since Lemma \ref{lem:lozi_recurr_coef}(2) implies that $s_i<0$ for all $i\geqslant i_0$, we conclude by using the same argument that $V_{y}^{-n}>V_{y}^{-n+1}$ for all odd $n$ such that $i_0\leqslant n\leqslant n_0$. Namely, for any such $n$, let $T'$ be the intersection of the $x$-axis and the straight line through $V^{-n+2}$ and $V^{-n+1}$. Since $V^{-n+1}$ lies on $\overline{V^{-n+2}T'}$, then $V^{-n}$ will lie on $\overline{V^{-n+1}T'^{-1}}$ and thus $V_{y}^{-n}>V_{y}^{-(n-1)}$ because the slope of the straight line through $V^{-n+1}$ and $T'^{-1}$, $s_n$, is negative.
	
	Since $V_{y}^{-n_0}>0>V_{y}^{-n_0+1}$, it follows that $n_0$ is odd, which finishes the proof.
	\end{proof} 
	
	\begin{df}
	The broken line $[V,V^{-n_0}]^{s}$ from Lemma \ref{lem:zigzag} will be called the \emph{zigzag part} of $W_X^{s}$.
	\end{df}
	
	For example, on Figure \ref{fig:Lozi_stable_unstable_Z_V}, one can see that $n_0=5$, that is, the zigzag part of $W_X^s$ is $[V,V^{-5}]^{s}$.

	\subsection{Tangential homoclinic points}\label{subsec:border_hom_pts}
	
	Let $\eta$ and $\xi$ be two polygonal lines in the plane intersecting at a point $T$, and assume that $T$ is isolated from other intersection points of $\eta$ and $\xi$ in their respective topologies. For $\varepsilon>0$, let $\eta_{T,\varepsilon}$, $\xi_{T,\varepsilon}$ be the connected components of $\eta\cap B_{\varepsilon}(T)$, $\xi\cap B_{\varepsilon}(T)$ containing $T$, respectively.  
	Recall that there exist two possible types of intersections at $T$, as shown in Figure \ref{figure:tangential_transversal_intersec}. Firstly, $\eta$ and $\xi$ can intersect \emph{tangentially}; in that case, there exists an $\varepsilon>0$ such that $B_{\varepsilon}(T)\setminus \eta_{T,\varepsilon}$ consists of two connected components only one of which contains $\xi_{T,\varepsilon}$. Secondly, if such $\varepsilon$ does not exist, we say that the intersection at $T$ is \emph{transverse}. In addition, $\eta$ and $\xi$ can also \emph{intersect along a segment}, and this case will be dealt with in Subsection \ref{subsec:hom_intersec_segment}.
	
	\begin{figure}[!ht]
	\begin{center}
	\begin{tikzpicture}[auto, scale=1]
	\tikzstyle{nodec}=[draw,circle,fill=black,minimum size=2pt,
			inner sep=0pt, label distance=2mm]
			\tikzstyle{nodeh}=[draw,circle,fill=white,minimum size=4pt,
			inner sep=0pt]
			\tikzstyle{dot}=[circle,draw=none,fill=none,minimum size=0pt,inner sep=2pt, outer sep=-1pt]

			\begin{scope}
			\coordinate (t) at (0,0);			
			
			\coordinate (e1) at (0,1.25);
			\coordinate (e2) at (0,-0.84);
			\coordinate (e3) at (-1.45,-1.5);
			
			\draw[green!40!black, thick] (e1) node[left]{$\eta$} --(t)--(e2)--(e3) node[left]{$\eta$};
			
			\coordinate (k1) at (1,2);
			\coordinate (k2) at (1.6,1.2);
			\coordinate (k3) at (1,-1);
			
			\draw[violet!60!white, thick] (k1) node[left]{$\xi$}--(k2)--(t)--(k3) node[right]{$\xi$};	
			
			\node[nodec, label={[left]$T$}] at (t) {};
			\draw[thick, densely dotted] (t) circle (0.6);
			\end{scope}
			
			\begin{scope}[xshift=0.3333\linewidth, yshift=0cm]
			\coordinate (t) at (0,0);			
			
			\coordinate (e1) at (-2,1);
			\coordinate (e2) at (-1,2);
			\coordinate (e3) at (0.7,-1.4);
			
			\draw[green!40!black, thick] (e1) node[left]{$\eta$} --(e2)--(t)--(e3) node[right]{$\eta$};
			
			\coordinate (k1) at (0.5,1);
			\coordinate (k2) at (-1,-2);
			\coordinate (k3) at (-1.5,-1);
			\coordinate (k4) at (-2.2,-1.4);
			
			\draw[violet!60!white, thick] (k1) node[right]{$\xi$}--(t)--(k2)--(k3)--(k4) node[left]{$\xi$};	
			
			\node[nodec, label={[left]$T$}] at (t) {};
			\draw[thick, densely dotted] (t) circle (0.6);
			\end{scope}
			
			\begin{scope}[xshift=0.6666\linewidth, yshift=0cm]
			\coordinate (t1) at (0,0.75);
			\coordinate (t2) at (0,-0.75);			
			
			\coordinate (e1) at (1.5,2);
			\coordinate (e2) at (0.4,1.6);
			\coordinate (e3) at (-2,-1.2);
			
			\draw[green!40!black, thick] (e1) node[right]{$\eta$} --(e2)--(t1)--(t2)--(e3) node[left]{$\eta$};
			
			\coordinate (k1) at (-1.9,0.8);
			\coordinate (k2) at (-2,1.6);
			\coordinate (k3) at (0,-1.7);
			\coordinate (k4) at (-1.5,-2);
			
			\draw[violet!60!white, thick] (k1) node[left]{$\xi$}--(k2)--(t1)--(t2)--(k3)--(k4) node[left]{$\xi$};	
			
			\draw [thick, densely dotted] (0.6,0.75) arc (0:180:0.6);
			\draw [thick, densely dotted] (-0.6,-0.75) arc (180:360:0.6);
			
			\draw [thick, densely dotted] (-0.6,0.75)--(-0.6,-0.75);
			\draw [thick, densely dotted] (0.6,0.75)--(0.6,-0.75);
			
			\node [nodec, label={[right]$T_1$}] at (t1) {};
			\node [nodec, label={[right]$T_2$}] at (t2) {};
			\end{scope}
	\end{tikzpicture}
	\end{center}
	\caption{Intersections of two polygonal lines $\eta$ and $\xi$ in the plane: tangential (left), transverse (middle), intersection along a segment (right).}
	\label{figure:tangential_transversal_intersec}
	\end{figure}

	\begin{lem}\label{lemma:hom_tang}
	On the boundary of existence of homoclinic points for $X$, no intersection point of $W_X^u$ and $W_X^s$ dif\mbox{}ferent from $X$ is transverse, that is, $W_X^u$ and $W_X^s$ intersect tangentially or along a segment.
	\end{lem}
	
	\begin{proof}
	We know that the boundary of existence of homoclinic points for $X$ consists of parameter pairs $(a,b)$ in each open neighborhood of which there are corresponding Lozi maps which do and do not exhibit homoclinic points. Notice that all post-critical points on $W_X^s$ and V-points on $W_X^u$, and thus $W_X^s$ and $W_X^u$ themselves, depend continuously on $(a,b)$. Therefore, if $L_{a',b'}$ has a transverse homoclinic point for some $(a',b')$, that intersection will be transverse on some open neighborhood of $(a',b')$ in the parameter space, which implies that this point lies in the interior of the set of existence of homoclinic points for $X$. Hence, on the boundary of that set, no homoclinic intersections are transverse. 
	\end{proof}
	
	We will first consider the case when all homoclinic intersections are tangential. Each such homoclinic point is a V-point on $W_X^s$ or a post-critical point on $W_X^u$.
	
	In this subsection, we will consider the case when all intersections of $W_X^s$ and $W_X^u$ are tangential, the case of intersections along a segment will be studied in the next subsection. We will first describe the general structure of the unstable manifold $W_X^{u}$.
	
	Observe the forward orbit $(Z^n)_{n\in\mathbb{N}_0}$ of $Z$. Because $Z$ lies on the positive $x$-axis, from (\ref{eq:Z}) it follows that
	\begin{equation*}
	Z^1=\left(\frac{2-a-\sqrt{a^2+4b}}{2+a-\sqrt{a^2+4b}},\: \frac{2b}{2+a-\sqrt{a^2+4b}}\right).
	\end{equation*}
	Notice that for $a+b>1$ and $b<1$, we have $a^2>1-2b+b^2$, which implies
	\begin{multline*}
	a^2+a\sqrt{a^2+4b}+2b > 1-2b+b^2+a\sqrt{1-2b+b^2+4b}+2b \\
	 = 1+b(a+b)+a > 1+a+b = 2.
	\end{multline*}
	Hence,
	\begin{equation*}
	0>\frac{2-a^2-a\sqrt{a^2+4b}-2b}{2(1+a-b)}=\frac{2-a-\sqrt{a^2+4b}}{2+a-\sqrt{a^2+4b}}=Z_{x}^1,
	\end{equation*}	  
	and $Z^1$ lies in the second quadrant $\mathcal{Q}_2$. That is why $Z^2$ will lie in $\mathcal{Q}_4$ or $\mathcal{Q}_3$, and $[Z,Z^2]^{u}$ will not contain $Z^1$. In general, because $L_{a,b}$ is order reversing, $[Z,Z^{2k}]^{u}$ will not contain any odd iterates of $Z$ (for any given $k\in\mathbb{N}$) and vice versa; see Figure \ref{figure:unstable_structure}. For every $n \in \mathbb{N}_0$, we will put
	\begin{equation}
	\gamma_n = [Z^{2n-1},Z^{2n+1}]^{u},\quad \delta_n = [Z^{2n},Z^{2n+2}]^{u},
	\label{eq:gamma_n_delta_n}
	\end{equation}	 
and additionally,
	\begin{equation}
	\Gamma = \bigcup_{n=0}^{\infty}\gamma_n = W_X^{u-}\setminus[X,Z^{-1})^{u}, \quad \Delta = \bigcup_{n=0}^{\infty}\delta_n = W_X^{u+}\setminus[X,Z)^{u}.
	\label{eq:Gamma_Delta}
	\end{equation}
	
	\begin{figure}[!ht]
	\begin{center}
	\begin{tikzpicture}[auto]
			\tikzstyle{nodec}=[draw,circle,fill=black,minimum size=2pt,
			inner sep=0pt, label distance=2mm]
			\tikzstyle{nodeh}=[draw,circle,fill=white,minimum size=4pt,
			inner sep=0pt]
			\tikzstyle{dot}=[circle,draw=none,fill=none,minimum size=0pt,inner sep=2pt, outer sep=-1pt]

			\draw[->] (-5,0)--(5,0) node [below]{$x$};
			\draw[->] (0,-4)--(0,4) node [left]{$y$};
			\node[label={[xshift=-0.2cm, yshift=-0.7cm]$0$}] at (0,0) {};
			
			\coordinate (e0) at (0,0);
			\coordinate (ex) at (1,0);
			\coordinate (ey) at (0,1);
			
			\node[nodec, color=blue, label={[below right, color=blue]$Z$}] (t0) at (4,0) {};
			\node[nodec, color=blue, label={[above left, color=blue]$Z^1$}] (t1) at (-2,3) {};
			\coordinate (tm1) at (intersection of t0--t1 and e0--ey);
			\node[nodec, color=blue, label={[above right, color=blue]$Z^{-1}$}] at (tm1) {};
			\node[nodec, color=blue, label={[left, color=blue]$Z^3$}] (t3) at (-3,2) {};
			\node[nodec, color=blue, label={[below, color=blue]$Z^5$}] (t5) at (-2.5,-1) {};
			\node[nodec, color=blue, label={[below, color=blue]$Z^2$}] (t2) at (2.5,-3.5) {};
			\node[nodec, color=blue, label={[below, color=blue]$Z^4$}] (t4) at (-1,-3) {};
			\node[nodec, color=blue, label={[left, color=blue]$Z^6$}] (t6) at (-0.5,-1) {};
			
			\draw[blue, thick] (t0)--(tm1);
			\draw[blue, thick] (tm1) to node[dot, swap]{$\gamma_0$} (t1);
			\draw[blue, thick] (t1) to node[dot, swap]{$\gamma_1$} (t3);
			\draw[blue, thick] (t3) to node[dot, swap]{$\gamma_2$} (-1.8,0);
			\draw[blue, thick] (-1.8,0)--(t5);
			\draw[blue, thick] (t0) to node[dot, swap]{$\delta_0$} (t2);
			\draw[blue, thick] (t2) to node[dot, swap]{$\delta_1$} (t4);
			\draw[blue, thick] (t4)--(0.7,-2);
			\draw[blue, thick] (0.7,-2) to node[dot, swap]{$\delta_2$} (t6);
			\end{tikzpicture}
	\end{center}
	\caption{Sketch of the general structure of $W_X^u$. Here, we denote $\gamma_n = [Z^{2n-1},Z^{2n+1}]^u$ and $\delta_n = [Z^{2n},Z^{2n+2}]^u$ for all $n \in \mathbb{N}_0$.}
	\label{figure:unstable_structure}
	\end{figure}
	
	Recall that $W_X^{s+}$ denotes the part of $W_X^s$ that is a half-line starting at $X$ and going up in $\mathcal{Q}_1$. The following lemma shows that all homoclinic points for $X$ are always contained in the other part of $W_X^s$, that is, in $W_X^{s-}$.
	
	\begin{lem} \label{lem:Ws+X_WuX_intersect_at_X_only}
	$W_X^{u}\cap W_X^{s+}=\{X\}.$
	\end{lem}
	
	\begin{proof}
	Seeking a contradiction, assume that $W_X^u$ and $W_X^{s+}$ intersect at a point $A$ dif\mbox{}ferent from $X$. Since $W_X^{s+}$ is an $L_{a,b}$- and $L_{a,b}^{-1}$-invariant half-line in the upper half-plane, $L_{a,b}^{-1}$ acts on all points of $W_X^{s+}$ as an affine map. Therefore, for the Euclidean distances between backward iterates of $A$ and $X$ we have
	\begin{equation*}
	\dist(A^{-n},X)=\frac{1}{|\lambda_X^s|^n}\dist(A,X)\overset{n \rightarrow \infty}{\longrightarrow} \infty,
	\end{equation*}
	where $\lambda_X^s$ is the stable eigenvalue of $DL_{a,b}$ at $X$.
	
	On the other hand, since $A\in W_X^u$, we have by definition that $\dist(A^{-n},X)\overset{n \rightarrow \infty}{\longrightarrow}0$, which is a contradiction. Therefore, $W_X^u$ and $W_X^{s+}$ intersect at $X$ only.  
	\end{proof}
	
	\begin{lem}\label{lemma:hom1}
	Assume there exists a homoclinic point for the fixed point $X$. Then there exists a homoclinic point for $X$ on the line segment $\overline{XV}^{s}$.
	\end{lem}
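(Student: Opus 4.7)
The plan is to take an arbitrary homoclinic point $T$ and produce an iterate $T^{-m} := L_{a,b}^{-m}(T)$ lying on $\overline{XV}^s$. Because $W_X^s$ and $W_X^u$ are both $L_{a,b}$-invariant and $X$ is a fixed point, every iterate of a homoclinic point is again a homoclinic point, so exhibiting one such iterate is enough to conclude.

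The first step is to rule out the possibility $T \in W_X^{s+}$. The half $W_X^{s+}$ is a straight ray emanating from $X$ into the interior of $\mathcal{Q}_1$ along the stable eigenvector $(\lambda_X^s,b)$, and on a neighborhood of it $L_{a,b}$ is affine, acting along the ray as multiplication by $\lambda_X^s \in (0,1)$. Hence $L_{a,b}^{-1}$ expands distances from $X$ along $W_X^{s+}$ by the factor $1/\lambda_X^s > 1$. If $T \in W_X^{s+} \setminus \{X\}$ were homoclinic, then $\|T^{-n} - X\|$ would grow without bound, contradicting the requirement $T^{-n} \to X$ that follows from $T \in W_X^u$.

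Consequently $T \in W_X^{s-} \setminus \{X\}$. Invoking the decomposition $W_X^{s-} = \bigcup_{n \in \mathbb{Z}} L_{a,b}^{-n}(\overline{VV^1}^s)$ from Subsection \ref{subsec:lozi_maps}, I would pick $m \in \mathbb{Z}$ such that $T$ lies in $L_{a,b}^m(\overline{VV^1}^s)$. Then $T^{-m}$ lies in $\overline{VV^1}^s$, and to finish I would note that $\overline{VV^1}^s \subset \overline{XV}^s$: indeed, $L_{a,b}$ is affine on the half-plane $\{x \geqslant 0\}$ which contains $\overline{XV}^s$, fixes $X$, and contracts along the stable direction by $\lambda_X^s$, so $V^1 = L_{a,b}(V)$ lies on the stable line through $X$ strictly between $X$ and $V$. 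Hence $T^{-m} \in \overline{XV}^s$ is the desired homoclinic point.

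The main obstacle is the exclusion of $W_X^{s+}$ in the first step. Its subtlety lies in the fact that forward iterates of any point in $W_X^{s+}$ do converge to $X$, mimicking the behavior of a genuine homoclinic point, so the contradiction must be extracted from the backward dynamics along the upward ray, where the affine linearisation forces $T^{-n}$ to escape to infinity. The remaining two steps are essentially bookkeeping along the polygonal structure of $W_X^{s-}$ provided by the formula recalled from Subsection \ref{subsec:lozi_maps}.
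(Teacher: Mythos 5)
Your proof is correct and follows essentially the same route as the paper: push the homoclinic point along the stable manifold by an appropriate iterate of $L_{a,b}$ until it lands on $\overline{XV}^{s}$, using the decomposition of $W_X^{s-}$ into iterates of $\overline{VV^1}^{s}$ together with $\overline{VV^1}^{s}\subset\overline{XV}^{s}$. The only point where you go beyond the paper's one-line argument is in explicitly excluding homoclinic points on the ray $W_X^{s+}$ via backward expansion by $1/\lambda_X^s$ --- a case the paper silently subsumes in its assertion that every point of $W_X^{s}$ lies in some $L_{a,b}^{-n}\bigl(\overline{XV}^{s}\bigr)$ --- and that extra care is correct and welcome.
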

	
	\begin{proof}
	If $T$, $T\neq X$, is a homoclinic point for $X$, then, because $T\in W^{s-}_{X}=\bigcup_{n=0}^{\infty}L_{a,b}^{-n}\bigl(\overline{XV}^s\bigr)$, it follows that there exists an $n\in\mathbb{N}_0$ such that $T\in L_{a,b}^{-n}\bigl(\overline{XV}^{s}\bigr)$. Therefore, $T^n \in \overline{XV}^{s}$, and $T^n \neq X$ since $L_{a,b}$ is a homeomorphism. Since $W^{u}_{X}$ is $L_{a,b}^n$-invariant, $T^n$ is the desired homoclinic point.
	\end{proof}
	
	In general, if $Z^{2n+1}$ is in $\mathcal{Q}_2$ above the curve $y=a|x|-1$ for some $n\in\mathbb{N}_0$, then $Z^{2n+2}$ is in $\mathcal{Q}_4$, and $Z^{2n+3}$ is in $\mathcal{Q}_2$ or $\mathcal{Q}_1$. Therefore, there exist parameter pairs for which neither $\Gamma$ nor $\Delta$ intersect the $y$-axis except at $Z^{-1}$, which lies on $\Gamma$. However, in the case when homoclinic points for $X$ exist, notice that $\Gamma$ intersects the $y$-axis at additional points because, due to Lemma \ref{lemma:hom1}, it intersects $W_X^s$ at points lying on $\overline{XV}^{s}$ in $\mathcal{Q}_1$ or $\mathcal{Q}_4$. This leads us to our two main cases of interest: either $\Delta$ intersects the $y$-axis, or it does not.
	
	\begin{figure}[!ht]
	\begin{center}
	\begin{tikzpicture}[auto]
			\tikzstyle{nodec}=[draw,circle,fill=black,minimum size=2pt,
			inner sep=0pt, label distance=2mm]
			\tikzstyle{nodeh}=[draw,circle,fill=white,minimum size=4pt,
			inner sep=0pt]
			\tikzstyle{dot}=[circle,draw=none,fill=none,minimum size=0pt,inner sep=2pt, outer sep=-1pt]

			\draw[->] (-5,0)--(5.5,0) node [below]{$x$};
			\draw[->] (0,-5)--(0,5) node [left]{$y$};
			\node[label={[xshift=-0.2cm, yshift=-0.7cm]$0$}] at (0,0) {};
			
			\coordinate (e0) at (0,0);
			\coordinate (ex) at (1,0);
			\coordinate (ey) at (0,1);
			
			\node[nodec, color=blue, label={[below right, xshift=-1mm, color=blue]$Z$}] (t0) at (4.7,0) {};
			\node[nodec, color=blue, label={[above right, color=blue]$Z^1$}] (t1) at (-3,4) {};
			\coordinate (tm1) at (intersection of t0--t1 and e0--ey);
			\node[nodec, color=blue, label={[above right, color=blue]$Z^{-1}$}] at (tm1) {};
			\node[nodec, color=blue, label={[left, color=blue]$Z^3$}] (t3) at (-3.5,3.5) {};
			
			\node[nodec, color=blue, label={[left, color=blue]$Z^{2i-3}$}] (t2im3) at (-3,2) {};
			\node[nodec, color=blue, label={[below, color=blue]$Z^{2i-1}$}] (t2im1) at (-1.5,0.5) {};
			\node[nodec, color=blue, label={[below right, xshift=-1mm, yshift=-1mm, color=blue]$Z^{2i+1}$}] (t2ip1) at (0.5,1) {};
			\node[nodec, color=blue, label={[right, xshift=-1mm, yshift=-2mm, color=blue]$Z^{2i+3}$}] (t2ip3) at (0.75,1.5) {};
			
			\node[nodec, color=blue, label={[right, color=blue]$Z^2$}] (t2) at (4.1,-1.5) {};
			\node[nodec, color=blue, label={[below, color=blue]$Z^4$}] (t4) at (3,-2.5) {};
			
			\node[nodec, color=blue, label={[below left, color=blue]$Z^{2i-2}$}] (t2im2) at (1.5,-2.5) {};
			\node[nodec, color=blue, label={[above left, xshift=2mm, yshift=-1mm, color=blue]$Z^{2i}$}] (t2i) at (1,-1) {};
			
			\coordinate (q) at (intersection of t2im1--t2ip1 and e0--ey);
			\node[nodec, label={[below, xshift=2mm]$A$}] at (q) {};
			
			\draw[blue, thick] (t0)--(t1);
			\draw[blue, thick] (t1)--(t3);
			\draw[blue, thick, dashed] (t3)--(t2im3);
			\draw[blue, thick] (t2im3) to node[dot, swap]{$\gamma_{i-1}$} (t2im1);
			\draw[blue, thick] (t2im1) to node[dot, swap]{$\gamma_i$} (t2ip1);
			\draw[blue, thick] (t2ip1)--(0.25,1.4)--(t2ip3);
			
			\draw[blue, thick] (t0)--(t2)--(t4);
			\draw[blue, thick, dashed] (t4)--(t2im2);
			\draw[blue, thick] (t2im2) to node[dot]{$\delta_{i-1}$} (t2i);
			
			\coordinate (d1) at (-3.5,4.4);
			\coordinate (d2) at (-3.5,-4.4);
			
			\coordinate (d) at (intersection of d1--t1 and e0--ex);
			\draw[green!40!black, thick, dashed] (d1)--(d)--(d2) node[below]{$x=1-\frac{a}{b}|y|$};
			
			\draw[thick, dashed] (4,1.5) node[right]{$y=a|x|-1$}--(0,-2.25)--(-4,1.5);
			
			\coordinate (f1) at (0,-2.25);
			\coordinate (f2) at (4,1.5);
			\coordinate (f3) at (-4,1.5);
			\coordinate (f) at (intersection of f1--f2 and e0--ex);
			\draw[red, thick] (f1) to node[dot, swap]{\scriptsize{$\varphi$}} (f);
			
			\draw[red, thick] (-3,0.5625) to node[dot, label={[above, color=red]\scriptsize{$L_{a,b}^{-1}(\varphi)$}}]{} (0,1.4);
			
			\begin{pgfonlayer}{bg}
				\fill [cyan!10!white] (q.center)--(t2im1.center)--(t2im3.center)--(t3.center)--(t1.center)--(tm1.center)--cycle;
			\end{pgfonlayer}
			
			\node[dot,label=$\mathcal{F}$] (ff) at (-2.5,2.5) {}; 
			\end{tikzpicture}
	\end{center}
	\caption{The figure illustrates the proof of Lemma \ref{lemma:unstable_yaxis_first}. Further iterations of $\gamma_i$ under $L_{a,b}^2$ do not intersect $L_{a,b}^{-1}(\varphi)$ outside the shaded polygon $\mathcal{F}$.}
	\label{figure:unstable_yaxis_first}
	\end{figure}
	
	\begin{lem}\label{lemma:unstable_yaxis_first}
	Assume that $\Delta$ intersects the $y$-axis. If $i_0$ is the smallest $i\in\mathbb{N}_0$ such that $\delta_i$ intersects that axis, then $\delta_{i_0}$ is a straight line segment, $\delta_{i_0}=\overline{Z^{2i_0}Z^{2i_0+2}}^{u}$.
	\end{lem}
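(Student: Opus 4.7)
The plan is to establish, by induction on $j$ for $0 \le j \le i_0$, that $\delta_j$ is a straight line segment, together with the auxiliary invariant that $\delta_j \subset \overline{\mathcal{Q}_4}$ whenever $j < i_0$. For the base case $j = 0$, the arc $[X, Z]^u \subset \mathcal{Q}_1$ is straight; by affinity of $L_{a,b}$ on $\{x \ge 0\}$ the image $[X, Z^1]^u$ is straight, and so is its sub-arc $\gamma_0 = [Z^{-1}, Z^1]^u$. Since $\gamma_0$ lies in the closed left half-plane, affinity of $L_{a,b}$ on $\{x \le 0\}$ makes $\delta_0 = L_{a,b}(\gamma_0)$ a straight line segment. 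If $i_0 \ge 1$, then $\delta_0 \cap \{x = 0\} = \emptyset$ together with $Z^2_y = b Z^1_x < 0$ places $Z^2 \in \mathcal{Q}_4$, so $\delta_0 \subset \overline{\mathcal{Q}_4}$.

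For the inductive step, suppose that $\delta_{j-1}$ is straight and contained in $\overline{\mathcal{Q}_4}$ for some $1 \le j \le i_0$. Affinity of $L_{a,b}$ on the right half-plane sends $\delta_{j-1}$ to a straight segment $\gamma_j$ whose endpoints $Z^{2j-1}, Z^{2j+1}$ have non-negative $y$-coordinates $b Z^{2j-2}_x, b Z^{2j}_x$, so $\gamma_j$ lies in the closed upper half-plane. Moreover $Z^{2j-1}_x = Z^{2j}_y / b \le 0$ since $Z^{2j} \in \overline{\mathcal{Q}_4}$. The central sub-claim, and the main obstacle, is that $\gamma_j$ does not cross the $y$-axis in its interior, equivalently $Z^{2j+1}_x \le 0$. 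Granting the sub-claim, $\gamma_j$ is a straight segment contained in $\overline{\mathcal{Q}_2}$, on which $L_{a,b}$ is affine, and hence $\delta_j = L_{a,b}(\gamma_j)$ is a straight line segment — precisely the lemma's conclusion at $j = i_0$. For $j < i_0$, the relation $Z^{2j+2}_y = b Z^{2j+1}_x \le 0$ combined with $\delta_j \cap \{x = 0\} = \emptyset$ (which, by straightness of $\delta_j$ and $Z^{2j} \in \overline{\mathcal{Q}_4}$, forces $Z^{2j+2}_x \ge 0$) yields $Z^{2j+2} \in \overline{\mathcal{Q}_4}$ and $\delta_j \subset \overline{\mathcal{Q}_4}$, closing the induction.

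The hard part is the sub-claim itself, and I would argue it by contradiction. If $\gamma_j$ crossed the $y$-axis at an interior point $A$, then $B = L_{a,b}(A) \in \{y = 0,\ x > 0\}$ would be a post-critical break point of $\delta_j$ strictly between $Z^{2j}$ and $Z^{2j+2}$ on $W_X^u$. The geometric analysis is encoded by the shaded polygon $\mathcal{F}$ of Figure \ref{figure:unstable_yaxis_first}, bounded on one side by the $y$-axis segment $[Z^{-1}, A]$ and on the other by the initial arc of $\Gamma$ from $Z^{-1}$ through $Z^1, Z^3, \ldots, Z^{2j-1}$ and then along $\gamma_j$ up to $A$. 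The key observation — illustrated by the figure — is that $L_{a,b}^2$-iterates of the relevant portion of $\gamma_j$ meet the preimage curve $L_{a,b}^{-1}(\varphi)$, where $\varphi$ is the portion of $\{y = a|x| - 1\}$ lying in $\mathcal{Q}_4$, only inside $\mathcal{F}$. Pushing this containment forward through $L_{a,b}$, and using that intersections with $L_{a,b}^{-1}(\varphi)$ track intersections of earlier $\delta_k$'s with the corresponding subsegment of the $y$-axis, one concludes that some $\delta_k$ with $k < j \le i_0$ already meets the $y$-axis, contradicting the minimality of $i_0$ and establishing the sub-claim.
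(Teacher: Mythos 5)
Your reduction of the lemma to the sub-claim that no $\gamma_j$ with $j\leqslant i_0$ crosses the $y$-axis in its interior is sound, and your inductive bookkeeping ($\delta_j$ straight and contained in $\overline{\mathcal{Q}_4}$ for $j<i_0$, endpoints controlled via $Z^{2j+1}_y=bZ^{2j}_x$ and $Z^{2j-1}_x=Z^{2j}_y/b$) is a legitimate, slightly more explicit packaging of what the paper does. The genuine gap is in the proof of the sub-claim itself: you run the trapping argument in the wrong temporal direction. The polygon $\mathcal{F}$ controls the \emph{future} of $W_X^u$, not its past. What a transversal crossing of $\gamma_j$ with the positive $y$-axis at $A$ actually yields is that every later return of $W_X^u$ to $\mathcal{Q}_2$ must enter through the $y$-axis segment $\overline{AZ^{-1}}$, hence into $\mathcal{F}$, and since $W_X^u$ has no self-intersections it stays there; consequently no later $\gamma_l$ can reach the branch of $y=a|x|-1$ in $\mathcal{Q}_2$ (which is the preimage of the negative $y$-axis), so no later $\delta_l$ meets the $y$-axis at all. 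The contradiction is therefore with the lemma's standing hypothesis that $\Delta$ intersects the $y$-axis --- not, as you claim, with the minimality of $i_0$ via some \emph{earlier} $\delta_k$ meeting the axis.

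The specific assertion you lean on, that ``intersections with $L_{a,b}^{-1}(\varphi)$ track intersections of earlier $\delta_k$'s with the corresponding subsegment of the $y$-axis,'' is false. Since $\varphi$ is the portion of the curve $y=a|x|-1$ in $\mathcal{Q}_4$ (the preimage of the positive $y$-axis), the correct correspondence is $\gamma_{l-1}\cap L_{a,b}^{-1}(\varphi)\neq\emptyset \iff \delta_{l-1}\cap\varphi\neq\emptyset \iff \gamma_l$ meets the positive $y$-axis; pulling back further only produces intersections with ever higher preimages of the $y$-axis, never with the $y$-axis itself. So the backward propagation cannot terminate in ``some $\delta_k$ with $k<j$ already meets the $y$-axis,'' and the sub-claim is not established. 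To repair the argument, replace the last step by the forward version: assume $\gamma_j$ crosses the positive $y$-axis at $A$, show every subsequent visit of $W_X^u$ to $\mathcal{Q}_2$ is confined to $\mathcal{F}$, conclude that $\Delta$ never meets the $y$-axis, and contradict the hypothesis of the lemma.
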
 
	
	\begin{proof}
	We claim that there are no other post-critical points on $\delta_{i_0}$ apart from the corresponding iterates of $Z$. Assume, for sake of contradiction, that the converse holds. Then $\gamma_i$ transversely intersects the $y$-axis for some $i\in\mathbb{N}_0$, $0<i\leqslant i_0$. Observe the broken line $[Z^1,Z^{2i+1}]^{u}$. Because of the choice of $i_0$, that line does not intersect the curve $y=a|x|-1$, and thus it does not intersect the $x$-axis either. Therefore, the straight line segment $\gamma_i$ will intersect the positive $y$-axis at some point $A$. Let $\mathcal{F}$ denote the polygon in the second quadrant $\mathcal{Q}_2$ with boundary $\partial\mathcal{F}=[Z^{-1},A]^{u}\cup\overline{AZ^{-1}}$. Observe that, because $Z_{x}>1$, the segment $\overline{Z^{-1}Z^1}^{u}$ is above, and the polygonal line $[Z^1,A]^{u}$ is below the curve $x=1-\frac{a}{b}|y|$ in the upper half-plane, since $[Z^1,A]^{u}$ is obtained from images of the corresponding $\delta_n$ which lie in $\mathcal{Q}_4$; see Figure \ref{figure:unstable_yaxis_first}.
	
	Notice that if $\gamma_i$ intersects the positive $y$-axis, then $\delta_{i-1}$ intersects the portion of the curve $y=a|x|-1$ in $\mathcal{Q}_4$. That portion is a straight line segment which will be denoted by $\varphi$. In that case, $\gamma_{i-1}$ intersects $L_{a,b}^{-1}(\varphi)$, which is a line segment in the left half-plane, with one endpoint lying on $y=a|x|-1$ and the other one on the $y$-axis. In general, if $\gamma_l$ intersects the $y$-axis below $A$ for some $l\in\mathbb{N}$, then $\gamma_{l-1}$ intersects $L_{a,b}^{-1}(\varphi)$ outside $\mathcal{F}$.
	
	On the other hand, observe the forward images of $\gamma_i$ under $L_{a,b}^2$. We know that points in $\mathcal{Q}_1$ are mapped under $L_{a,b}$ to $\mathcal{Q}_1$ or $\mathcal{Q}_2$ above the curve $x=1-\frac{a}{b}|y|$, and points in $\mathcal{Q}_2$ which lie to the right of the curve $y=a|x|-1$ are mapped to those in $\mathcal{Q}_4$. Therefore, in order for some $\gamma_{j}$ to reach $\mathcal{Q}_2$, it will first intersect $\overline{AZ^{-1}}$, i.e., the polygon $\mathcal{F}$. Since $W_X^{u}$ has no self-intersections, further forward images of $\gamma_j$ can only intersect $L_{a,b}^{-1}(\varphi)$ inside $\mathcal{F}$, which leads us to the conclusion that no further forward images of $\gamma_i$ under $L_{a,b}^2$ intersect the $y$-axis below $A$. By consequence, no further forward images of $\gamma_i$ under $L_{a,b}$ intersect the curve $y=a|x|-1$ in $\mathcal{Q}_2$ either. This is a contradiction with the assumption that $\Delta$ intersects the negative $y$-axis, so $\delta_{i_0}$ is indeed a straight line segment.
	\end{proof}
	
	We want to determine tangential homoclinic points for $X$ in the border case of their existence. In order to do that, we need to determine which post-critical points of $W_X^u$ lie on $W_X^s$, as well as which V-points of $W_X^s$ lie on $W_X^u$.
	
	Notice that all post-critical points on $W_X^u$ are forward iterates of points at which $W_X^u$ transversely intersects the $y$-axis. In the following two lemmas, we show that these intersection points are contained in polygons bounded by parts of $W_X^u$ and parts of $W_X^s$.
	
	Let $\gamma_n,\delta_n\subset W_X^u$ and $\Gamma,\Delta\subset W_X^u$ be as in (\ref{eq:gamma_n_delta_n}) and (\ref{eq:Gamma_Delta}), respectively; see Figure \ref{figure:unstable_structure}. Also, let $[V,V^{-n_0}]^s$ be the zigzag part of $W_X^s$, as introduced in Lemma \ref{lem:zigzag}.
	
	\begin{lem} \label{lem:main_thm_lemma1}
	Assume that all intersections of $W_X^u$ and $W_X^s$ are tangential except the one at $X$. Assume also that $\Delta$ does not intersect the $y$-axis. Then all intersections of $W_X^{u}$ with the $y$-axis are contained in the triangle $XVZ^1$.
	\end{lem}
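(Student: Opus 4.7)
The plan is to translate the claim, via the action of $L_{a,b}$, into a statement about the intersections of $\Delta$ with the $x$-axis, and then trap those intersections using the stable manifold as a barrier. By Lemma \ref{lemma:lozi_geom}(1), $L_{a,b}$ sends the $y$-axis to the $x$-axis via $(0,y) \mapsto (1+y,0)$; moreover, because $\lambda_X^u < 0$, $L_{a,b}$ exchanges the two branches $W_X^{u-}$ and $W_X^{u+}$, while $L_{a,b}(V) = V^1$ and $L_{a,b}(Z^{-1}) = Z$. Since $\Delta$ is disjoint from the $y$-axis by hypothesis, $W_X^u \cap (y\text{-axis}) = W_X^{u-} \cap (y\text{-axis})$, and the $y$-axis portion of the triangle $XVZ^1$ is exactly $\overline{VZ^{-1}}$ (with $Z^{-1}$ sitting on the side $[X,Z^1]^u$). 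Thus the conclusion is equivalent to
$$\Delta \cap (x\text{-axis}) \subset \overline{V^1 Z}.$$

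I then argue by contradiction. Suppose $Q \in \Delta \cap (x\text{-axis})$ has $Q_x \notin [V^1_x, Z_x]$; since $\Delta \subset \{x > 0\}$, either $0 < Q_x < V^1_x$ or $Q_x > Z_x$. In the first case, take $Q$ to be the first such crossing encountered along $\Delta$ and let $\alpha$ denote the subarc of $\Delta$ from $Z$ to $Q$. Because $\Delta$ enters $\mathcal{Q}_4$ at $Z$ along $\delta_0$, the arc $\alpha$ lies in $\overline{\mathcal{Q}_4}$, and the Jordan curve $\alpha \cup \overline{QZ}$ bounds a region $\mathcal{D} \subset \overline{\mathcal{Q}_4}$ whose top boundary $\overline{QZ}$ contains $V^1$ in its interior (since $V^1_x \in (Q_x, Z_x)$). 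The straight stable segment $\overline{V^1 V}^s \subset \overline{XV}^s$ leaves $V^1$ into $\mathcal{Q}_4$ heading toward $V$; it therefore enters $\mathcal{D}$, but $V \notin \mathcal{D}$ because $V_x = 0$ and $\mathcal{D} \subset \{x>0\}$, so $\overline{V^1V}^s$ must exit $\mathcal{D}$ through its boundary. Since $\overline{V^1V}^s$ meets the $x$-axis only at $V^1$, the exit must occur through $\alpha \subset \Delta$, yielding a homoclinic intersection that by the tangentiality hypothesis is tangential. But a tangential touch forces $\overline{V^1V}^s$ to remain on one local side of $\Delta$ and hence to stay inside $\mathcal{D}$ past the touch, contradicting $V \notin \mathcal{D}$. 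This rules out $Q_x < V^1_x$.

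For the remaining case $Q_x > Z_x$, a parallel trapping argument applies: the arc of $\Delta$ from $Z$ to $Q$ together with $\overline{ZQ}$ encloses a region, and the barrier preventing $\Delta$'s escape is built by combining the no-self-intersection property of $W_X^u$ (so that $\Delta$ cannot cross $[X,Z]^u$) with a piece of $\overline{XV^1}^s$ or one of its forward $L_{a,b}$-images, again extracting a transversal stable--unstable crossing that contradicts tangentiality. The main obstacle of the proof lies precisely in this second case: in Case 1, the probe $\overline{V^1 V}^s$ is handed to us for free because $V^1$ falls inside the trapping region, whereas for $Q_x > Z_x$ the analogous probe is not immediately available and has to be assembled from both manifolds using the no-self-intersection of $W_X^u$ as the second wall of the cage.
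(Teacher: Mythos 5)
Your reduction to the statement $\Delta\cap(x\text{-axis})\subset\overline{V^1Z}$ is correct, and the Case~1 trapping idea (cage $\overline{V^1V}^{s}$ between an arc of $\Delta$ and the $x$-axis, then use tangentiality to forbid escape) is sound in spirit, though even there you need a repair: the subarc of $\Delta$ from $Z$ to the \emph{first crossing with} $Q_x<V^1_x$ need not lie in $\overline{\mathcal{Q}_4}$, since $\Delta$ may cross the $x$-axis several times inside $[V^1_x,Z_x]$ before producing the bad crossing. You must instead work with the arc between two \emph{consecutive} crossings $Q_{k-1},Q_k$ straddling $V^1$, and when that arc lies in the upper half-plane the probe is no longer $\overline{V^1V}^{s}$ but $\overline{V^1X}^{s}$, which forces you to also verify $X\notin\mathcal{D}$. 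These are fixable.

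The genuine gap is Case~2 ($Q_x>Z_x$, equivalently $\Gamma$ crossing the $y$-axis above $Z^{-1}$), which you only gesture at. The region bounded by the arc of $\Delta$ from $Z$ to $Q$ and by $\overline{ZQ}$ sits below the $x$-axis to the \emph{right} of $Z$; no piece of $\overline{XV^1}^{s}$ or of its forward images enters it, and $[X,Z]^{u}$ approaches $Z$ from the upper left, so the two walls you name do not close into a cage around anything, and no stable segment is trapped. The obstruction is real: the $y$-axis is not a dynamical barrier, so to confine $\Gamma$ below $Z^{-1}$ one needs a closed curve made \emph{entirely} of stable and unstable arcs. This is exactly what the paper manufactures and what your argument never produces. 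The paper first shows (using Lemma \ref{lemma:hom1} and the forward/backward invariance of the homoclinic set) that $\delta_0$ carries a homoclinic point, that the only candidates for $W_X^{s}\cap\mathcal{Q}_4$ meeting $\delta_0$ are $\overline{VV^1}^{s}$ and the segment $\theta$, and that tangentiality then forces the homoclinic points on $\delta_0$ to be the post-critical points $Z$ and $Z^2$; hence $Z^1\in\overline{VV^{-1}}^{s}$, so the triangle $XVZ^1$ has boundary $[X,Z^1]^{s}\cup[Z^1,X]^{u}$, and $\Gamma$ is trapped inside it by tangentiality plus non-self-intersection. Your proposal uses neither the existence of a homoclinic point on $\delta_0$ nor the classification of $W_X^{s}\cap\mathcal{Q}_4$, and without some such input I do not see how the right-hand exclusion $Q_x\leqslant Z_x$ can be obtained.
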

	
	\begin{proof}	
	Lemma \ref{lemma:hom1} implies that there exists a homoclinic point point for $X$ on $\overline{XV}^s$. Moreover, since $\overline{XV}^s=\{X\}\cup\bigcup_{n=0}^{\infty}L_{a,b}^n\bigl(\overline{VV^1}^s\bigr)$, we see that there exists a homoclinic point for $X$ on $\overline{VV^1}^s$ and all of its forward and backward iterates as well. In particular, there exists a homoclinic point for $X$ on $\overline{V^{-n_0+1}V^{-n_0}}^s$, where $n_0\in\mathbb{N}$ is such that $[V,V^{-n_0}]^{s}$ is the zigzag part of $W_X^s$.
	
	We now claim that $n_0=1$. Assume by contradiction that the converse holds, $n_0>1$. Lemma \ref{lem:zigzag} then implies that $n_0\geqslant3$. In that case, $L_{a,b}\bigl(\overline{V^{-n_0+1}V^{-n_0}}^{s}\bigr)$ and $L_{a,b}^2\bigl(\overline{V^{-n_0+1}V^{-n_0}}^{s}\bigr)$ are two line segments that both belong to the zigzag part of $W_X^s$, so they are both contained in the third quadrant $\mathcal{Q}_3$ and there is a homoclinic point on each one of them. Notice that this implies that a homoclinic point on one of those segments belongs to $\Delta$: if a homoclinic point on $L_{a,b}^2\bigl(\overline{V^{-n_0+1}V^{-n_0}}^{s}\bigr)$ lies on $\Gamma$, then the preimage of that point lies on $\Delta$. Therefore, $\Delta$ intersects $\mathcal{Q}_3$. On the other hand, the assumption that $\Delta$ does not intersect the $y$-axis implies that $\Delta\subset\mathcal{Q}_1\cup\mathcal{Q}_4$. This yields a contradiction, so it follows that $n_0=1$.
	
	Furthermore, $V^{-1}$ lies on the curve $y=a|x|-1$ and because $V_{y}>-1$, the segment $\overline{VV^{-1}}^{s}$ lies above that curve in $\mathcal{Q}_2\cup\mathcal{Q}_3$. However, because portions of further backward images of $\overline{VV^{-1}}^{s}$ in the $\mathcal{Q}_3$ have negative slopes (Lemma \ref{lem:zigzag}, Lemma \ref{lem:lozi_recurr_coef}(2)), and they intersect the negative $y$-axis, they will all be below the curve $y=a|x|-1$ in $\mathcal{Q}_3$ and intersect it in $\mathcal{Q}_2$. 
	
	In addition, since $W_X^{u}=\bigcup_{n=0}^{\infty}L_{a,b}^n\bigl(\overline{XZ}^{u}\bigr)$ and $\overline{XZ}^{u}=\{X\}\cup\bigcup_{n=0}^{\infty}L_{a,b}^{-2n}\bigl(\overline{ZZ^{-2}}^{u}\bigr)$, it follows that if there are homoclinic points for $X$, then there is also a homoclinic point for $X$ lying on $\overline{ZZ^{-2}}^{u}$ and all of its forward and backward iterates. In particular, there exists a homoclinic point for $X$ on $\gamma_n$ and $\delta_n$ for all $n\in\mathbb{N}_0$.
	
		\begin{figure}[!ht]
	\begin{center}
	\begin{tikzpicture}[auto, xscale=1.5]
			\tikzstyle{nodec}=[draw,circle,fill=black,minimum size=2pt,
			inner sep=0pt, label distance=2mm]
			\tikzstyle{nodeh}=[draw,circle,fill=white,minimum size=4pt,
			inner sep=0pt]
			\tikzstyle{dot}=[circle,draw=none,fill=none,minimum size=0pt,inner sep=2pt, outer sep=-1pt]

			\draw[->] (-3,0)--(4,0) node [below]{$x$};
			\draw[->] (0,-4)--(0,6) node [left]{$y$};
			\node[label={[xshift=-0.2cm, yshift=-0.7cm]$0$}] at (0,0) {};
			
			\coordinate (e0) at (0,0);
			\coordinate (ex) at (1,0);
			\coordinate (ey) at (0,1);
			
			\coordinate (x) at (0.9,0.925);

			\node[nodec, color=red, label={[below right, color=red]$V$}] (v0) at (0,-2) {};
			\node[nodec, color=red, label={[above right, color=red]$V^{-1}$}] (vm1) at (-2,4) {};
			\node[nodec, color=red, label={[above, color=red]$V^{-2}$}] (vm2) at (3,1) {};
			\coordinate (a) at (2,4.5);
			\draw[red, thick] (a)--(v0)--(vm1)--(0,-3)--(vm2)--(0,-3.5)--(-3,3);
			
			\draw[thick, dashed] (-2.5,5.625)--(0,-2.5)--(2.5,5.625) node[right]{$y=a|x|-1$};
			
			\coordinate (b) at (0,-3);
			\coordinate (t0) at (intersection of b--vm2 and e0--ex);
			
			\coordinate (t1) at (intersection of t0--x and v0--vm1);
			\coordinate (t2) at (0.2,-1.35);
			\coordinate (t3) at (0.7,0.275);
			
			\draw[blue, thick] (t3)--(t1)--(t0)--(t2);
			
			\draw[red, thick] (b) to node[dot]{$\theta$} (vm2);
			
			\node[nodec, color=blue, label={[above, color=blue]$Z$}] at (t0) {};
			\node[nodec, label={[above left, xshift=2mm]$X$}] at (x) {};
			\node[nodec, color=blue, label={[above right, color=blue]$Z^1$}] at (t1) {};
			\node[nodec, color=blue, label={[above left, xshift=3mm, color=blue]$Z^2$}] at (t2) {}; 
			\node[nodec, color=blue, label={[above, xshift=-0.5mm, yshift=0.5mm, color=blue]$Z^3$}] at (t3) {}; 
			
			\begin{pgfonlayer}{bg}
				\fill [magenta!10!white] (x.center)--(v0.center)--(t1.center)--cycle;
			\end{pgfonlayer}
			\end{tikzpicture}
	\end{center}
	\caption{The figure illustrates the proof of Lemma \ref{lem:main_thm_lemma1}: $Z$ lies on the line segment $\theta$ (portion of $[V^{-2},V^{-1}]^{s}$ in the right half-plane), $Z^2$ on $\overline{VV^1}^{s}$, and $\Gamma$, together with all of its intersections with the $y$-axis, is contained in the shaded triangle $XVZ^1$.}
	\label{figure:thm_hom_case1}
	\end{figure}
	
	Now observe $\delta_0$. Since $\gamma_0$ is a line segment in $\mathcal{Q}_2$, then $\delta_0$ will also be a line segment. Due to the assumption that $\Delta$ does not intersect the $y$-axis, $\delta_0$ lies in $\mathcal{Q}_4$. We know there exists a homoclinic point for $X$ lying on $\delta_0$, which implies that this line segment intersects $W_X^{s}$ in $\mathcal{Q}_4$. On the other hand, $W_X^{s}\cap\mathcal{Q}_4$ consists of $\overline{VV^1}^{s}$ and the segments $\nu_n:=L_{a,b}^{-1}\Bigl(L_{a,b}^{-n}\bigl(\overline{VV^{-1}}^{s}\bigr)\cap\mathcal{Q}_2\Bigr)\cap\mathcal{Q}_4$, for all $n\in\mathbb{N}$. 
	
	Assume that $\delta_0$ intersects one of the segments $\nu_n$ for some $n>1$. Then $\gamma_1$ intersects $L_{a,b}^{-n}\bigl(\overline{VV^{-1}}^{s}\bigr)$ in $\mathcal{Q}_2$ below the curve $y=a|x|-1$. Specially, this implies that $\Gamma$ intersects the curve $y=a|x|-1$. On the other hand, $\Delta$ does not intersect the $y$-axis, and therefore $\Gamma$ does not intersect its preimage $y=a|x|-1$. This is a contradiction, so it follows that $\delta_0$ intersects $\overline{VV^1}^{s}$ or $L_{a,b}^{-1}\bigl(\overline{VV^{-1}}^{s}\bigr)\cap(\mathcal{Q}_4\cup\mathcal{Q}_1)$, which is a line segment. We denote that line segment by $\theta$. 
	
	However, the endpoints of $\overline{VV^1}^{s}$ are $V^1$ and $V$ (which lies on the $y$-axis), and those of $\theta$ are $V^{-2}$ and a V-point on the $y$-axis. Since $\Delta$ does not intersect the $y$-axis, and the homoclinic intersection on $\delta_0$ is tangential, we conclude that every homoclinic point on $\delta_0$ is a post-critical point, that is, $Z$ and $Z^2$. In this case, we see that $Z$ lies on $\theta$, and $Z^2$ lies on $\overline{VV^1}^{s}$. Therefore, $Z^1$ lies on $\overline{VV^{-1}}^{s}$ in $\mathcal{Q}_2$, and $Z^3$ lies on $\overline{V^1V^2}^{s}$ in $\mathcal{Q}_1$, so $\gamma_1$ is the first element of the sequence $(\gamma_n)_{n\in\mathbb{N}}$ that intersects the $y$-axis.
	
	Finally, observe now the triangle $XVZ^1$ and its boundary, $[X,Z^1]^{s}\cup[Z^1,X]^{u}$. Notice that $\Gamma$ is contained in that triangle since it cannot transversely intersect its sides. Therefore, all intersections of $W_X^{u}$ with the $y$-axis are contained in that triangle, which finishes the proof.
	\end{proof}
	
	We now observe the case when $\Delta \subset W_X^u$ (given by (\ref{eq:Gamma_Delta})) intersects the $y$-axis. Recall that $\delta_{i_0}$ is the first element of the sequence $(\delta_n)_{n \in \mathbb{N}_0}$ that intersects the $y$-axis.
	
	\begin{lem} \label{lem:main_thm_lemma2}
	Assume that all intersections of $W_X^u$ and $W_X^s$ are tangential except the one at $X$. Also assume that $\Delta$ intersects the $y$-axis, and let $M$ be the first homoclinic point on $\Delta$ that lies in $\mathcal{Q}_3$, counting from $Z$. Let $\mathcal{G}$ be the polygon with boundary $\partial\mathcal{G}=[X,M]^u\cup[M,X]^s$. Then the folowing statements hold.
	\begin{enumerate}
		\item All intersections of $\Delta$ with the $y$-axis are contained in $\mathcal{G}$, and all intersections of $\Gamma$ with the $y$-axis are contained in $L_{a,b}(\mathcal{G})$.
		\item If there exists a homoclinic point $\delta_{i_0}$ that is a V-point on the $y$-axis, then that point is also an iterate of $Z$ (more precisely, $Z^{2i_0}$ or $Z^{2i_0+2}$).
	\end{enumerate}
	\end{lem}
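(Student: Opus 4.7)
My plan is to prove statement (1) by a trapping argument inside $\mathcal{G}$ and then transfer this to $\Gamma$ via $L_{a,b}$-equivariance. The boundary $\partial\mathcal{G}=[X,M]^u\cup[M,X]^s$ consists of arcs of the two invariant manifolds sharing the endpoints $X$ and $M$. The continuation of $\Delta$ past $M$ cannot cross the arc $[X,M]^u$, because $W_X^u$ has no self-intersections; it cannot cross the arc $[M,X]^s\subseteq W_X^s$ transversally either, by the tangential-intersection hypothesis (Lemma \ref{lemma:hom_tang}). Hence $\Delta\subseteq\overline{\mathcal{G}}$, which immediately gives the first inclusion. For the second, I would use $L_{a,b}(\delta_n)=\gamma_{n+1}$, so that $L_{a,b}(\Delta)=\Gamma\setminus\gamma_0$ and the inclusion $\Delta\subseteq\overline{\mathcal{G}}$ pushes forward to $\Gamma\setminus\gamma_0\subseteq L_{a,b}(\overline{\mathcal{G}})$. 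The remaining piece $\gamma_0=[Z^{-1},Z^1]^u$ meets the $y$-axis only at $Z^{-1}$ in the configurations under consideration (since $\gamma_0$ is a straight segment here), and $Z^{-1}\in L_{a,b}([X,M]^u)\subseteq\partial L_{a,b}(\mathcal{G})$.

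For statement (2), by Lemma \ref{lemma:unstable_yaxis_first}, $\delta_{i_0}=\overline{Z^{2i_0}Z^{2i_0+2}}^u$ is a single straight line segment, so it crosses the $y$-axis at a unique point $T_0$. If $T_0\in\{Z^{2i_0},Z^{2i_0+2}\}$ the claim is immediate; otherwise $T_0$ is strictly interior to $\delta_{i_0}$, and I argue by contradiction, assuming that $T_0$ is a V-point of $W_X^s$. Because $T_0$ lies on the kink line of $L_{a,b}$ (namely the $y$-axis), its image $L_{a,b}(T_0)$ is an interior post-critical point of $\gamma_{i_0+1}$ lying on the $x$-axis; moreover, because the two arms of $W_X^s$ at $T_0$ lie on opposite sides of the $y$-axis, $L_{a,b}$ maps them by different affine pieces and the V-point property is preserved at $L_{a,b}(T_0)$. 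Hence $L_{a,b}(T_0)$ is simultaneously a post-critical point of $W_X^u$ and a V-point of $W_X^s$. I would iterate this and use the tangential hypothesis together with the localization provided by statement (1) (i.e., the trapping of $y$-axis crossings of $\Delta$ in $\mathcal{G}$ and of $x$-axis crossings of $\Gamma$ in $L_{a,b}(\mathcal{G})$) to obtain a configuration that is inconsistent with the ordering of V-points along the zigzag structure of Lemma \ref{lem:zigzag} and the slope recurrence of Lemma \ref{lem:lozi_recurr_coef}.

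The hardest step will be the geometric contradiction in (2): one must show that at a hypothetical interior V-point on the straight segment $\delta_{i_0}$, the two arms of $W_X^s$ cannot emanate in a configuration compatible with the reflex-region tangency required of $\delta_{i_0}$ together with the global structure of $W_X^s$. The decisive quantitative input I expect is the mismatch between the $y$-coordinate of the unique $y$-axis crossing of $\delta_{i_0}$, determined by the explicit positions of $Z^{2i_0}$ and $Z^{2i_0+2}$, and the $y$-coordinates of V-points of $W_X^s$ on the $y$-axis, which are governed by the backward iteration of $V$; these two families of $y$-coordinates can coincide only when $T_0$ is one of the endpoints of $\delta_{i_0}$, yielding $T_0=Z^{2i_0}$ or $T_0=Z^{2i_0+2}$.
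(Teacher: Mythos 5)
Your treatment of part (1) is essentially the paper's: $\Delta$ is trapped because it can cross neither $[X,M]^u$ (no self-intersections of $W_X^u$) nor $[M,X]^s$ (transversal crossings are excluded by the tangency hypothesis), and $\Gamma$ is handled by pushing forward under $L_{a,b}$ and treating $\gamma_0$ separately. One small omission: the non-crossing argument only confines the continuation of $\Delta$ past $M$ to one side of $\partial\mathcal{G}$, so you still need the seed observation (stated explicitly in the paper) that $[M,M^2]^u$ actually lies \emph{inside} $\mathcal{G}$ rather than in its exterior. Also note that the paper only defines $\mathcal{G}$ after first locating $M$ (it shows $M$ lies on the zigzag part of $W_X^s$ or on $L_{a,b}^{-1}\bigl(\overline{V^{-n_0+1}V^{-n_0}}^s\bigr)$), which is what guarantees that $[X,M]^u\cup[M,X]^s$ bounds a polygon with the stated properties; your proposal takes this for granted.

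Part (2) contains a genuine gap. First, your intermediate claim that ``the V-point property is preserved at $L_{a,b}(T_0)$'' is backwards: V-points on the $y$-axis arise precisely because $L_{a,b}^{-1}$ breaks a straight piece of $W_X^s$ at its crossing with the $x$-axis, so the forward image of such a V-point is generically a locally straight point of $W_X^s$ on the $x$-axis, not a V-point; applying the two affine branches of $L_{a,b}$ to the two arms \emph{undoes} the break rather than preserving it. Second, and more importantly, the contradiction itself is never derived --- ``iterate this and obtain a configuration inconsistent with the zigzag ordering'' and the anticipated ``mismatch of $y$-coordinates'' are placeholders for the entire content of the claim, and no coordinate computation of the kind you expect appears to be available or needed. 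The paper's route is different and concrete: it first shows that every homoclinic point on $\delta_{i_0}\cap\mathcal{Q}_3$ must lie on one of the three segments $\beta_0,\beta_1,\beta_2$ (the portions of $L_{a,b}^{-n}\bigl(\overline{V^{-n_0+2}V^{-n_0+1}}^s\bigr)\cap\mathcal{Q}_3$ below the curve $x=1-\frac{a}{b}|y|$), since otherwise $\delta_{i_0+1}$ would have to cross some $\alpha_{k-1}$ transversally; this forces a V-point of $W_X^s$ on the $y$-axis lying on $\delta_{i_0}$ to be the specific point $P_1=L_{a,b}^{-1}\bigl(\overline{V^{-n_0+1}V^{-n_0}}^s\cap\{x\text{-axis}\}\bigr)$. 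It then considers the triangle $P_1V^{-n_0-1}P_2$ and observes that a straight segment $\delta_{i_0}$ meeting $W_X^s$ tangentially at $P_1$ would have to cross a side of that triangle transversally unless $P_1$ is an endpoint of $\delta_{i_0}$, i.e.\ $Z^{2i_0}$ or $Z^{2i_0+2}$. Without some substitute for this localization and the local triangle argument, your proof of (2) is not complete.
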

	
	\begin{proof}
	Let $i_0$ be the smallest $i\in\mathbb{N}_0$ such that $\delta_i$ intersects the $y$-axis, as in Lemma \ref{lemma:unstable_yaxis_first}. Like before, let $[V,V^{-n_0}]^{s}$ be the zigzag part of $W_X^{s}$. We claim that $M$, the first homoclinic point that occurs on $\Delta$ in the third quadrant $\mathcal{Q}_3$ counting from $Z$, lies either on the zigzag part of $W_X^{s}$ or on $L_{a,b}^{-1}\bigl(\overline{V^{-n_0+1}V^{-n_0}}^{s}\bigr)$. 
	
	\begin{figure}[!ht]
	\begin{center}
	\begin{tikzpicture}[auto]
			\tikzstyle{nodec}=[draw,circle,fill=black,minimum size=2pt,
			inner sep=0pt, label distance=2mm]
			\tikzstyle{nodeh}=[draw,circle,fill=white,minimum size=4pt,
			inner sep=0pt]
			\tikzstyle{dot}=[circle,draw=none,fill=none,minimum size=0pt,inner sep=2pt, outer sep=-1pt]

			\draw[->] (-7,0)--(2,0) node [below]{$x$};
			\draw[->] (0,-7)--(0,1.5) node [left]{$y$};
			\node[label={[xshift=-0.2cm, yshift=-0.7cm]$0$}] at (0,0) {};
			
			\coordinate (e0) at (0,0);
			\coordinate (ex) at (1,0);
			\coordinate (ey) at (0,1);
			
			\coordinate (xi) at (-5,-6);
			\draw[green!40!black, thick, dashed] (ex)--(xi) node[below]{$x=1-\frac{a}{b}|y|$};
			
			\coordinate (a1) at (-1,-1);
			\coordinate (a2) at (-0.50291, -2.49484);
			\coordinate (a3) at (-4.11632, 0.66169);
			\coordinate (a4) at (0,-4.5);
			\coordinate (a5) at (0.77078, -3.92);
			\coordinate (a6) at (0,-5);
			\coordinate (a7) at (-4.6,0);
			\coordinate (a8) at (-5.2,0);
			\coordinate (a9) at (0,-5.45);
			\coordinate (a10) at (-5.5,0);
			\coordinate (a11) at (0,-5.82);
			\coordinate (a12) at (-6,0);
			\coordinate (a13) at (0,-6.4);
			
			\coordinate (b1) at (intersection of ex--xi and a1--a2);
			\coordinate (b2) at (intersection of ex--xi and a2--a3);
			\coordinate (b3) at (intersection of ex--xi and a3--a4);
			\coordinate (b4) at (intersection of ex--xi and a6--a7);
			\coordinate (b5) at (intersection of ex--xi and a8--a9);
			\coordinate (b6) at (intersection of ex--xi and a10--a11);
			\coordinate (b7) at (intersection of ex--xi and a12--a13);
			
			\draw[red, thick] (a1)--(b1);
			\draw[red, thick] (b2)--(a3)--(b3);
			\draw[red, thick] (a4)--(a5)--(a6);
			\draw[red, thick] (b4)--(a7);
			\draw[red, thick] (b5)--(a8);
			\draw[red, thick] (b6)--(a10);
			\draw[red, thick] (b7)--(a12);
			
			\draw[violet, thick] (b1) to node[dot]{$\beta_0$} (a2);
			\draw[violet, thick] (b2) to node[dot, swap]{$\beta_1$} (a2);
			\draw[violet, thick] (b3) to node[dot]{$\beta_2$} (a4);
			\draw[violet, thick] (b4)--(a6);
			\draw[violet, thick] (b5)--(a9);
			\draw[violet, thick] (b6)--(a11);
			\draw[violet, thick] (b7)--(a13);
			
			\node[nodec, color=red, label={[above, xshift=-1.5mm, color=red]$V^{-n_0+2}$}] at (a1) {};
			\node[nodec, color=red, label={[right, color=red]$V^{-n_0+1}$}] at (a2) {};			
			\node[nodec, color=red, label={[above, color=red]$V^{-n_0}$}] at (a3) {};
			\node[nodec, color=red, label={[above right, color=red]$V^{-n_0-1}$}] at (a5) {};
			\node[nodec, color=red, label={[above, xshift=2.3mm, yshift=1.5mm, color=red]$P_1$}] at (a4) {};
			\node[nodec, color=red, label={[below right, color=red]$P_2$}] at (a6) {};
					
			\end{tikzpicture}
	\end{center}
	\caption{Portions of $W_X^s$ in the third quadrant $\mathcal{Q}_3$: segments $\beta_n$ (violet) and $\alpha_n$. All possible homoclinic points on $\delta_{i_0}$ in the third quadrant can lie on $\beta_0$, $\beta_1$ or $\beta_2$ only.}
	\label{figure:thm_hom_stable_segments}
	\end{figure}
	
	Observe $\delta_{i_0}$. Lemma \ref{lemma:hom_tang} implies that all homoclinic points on $\delta_{i_0}$ are either post-critical points on $W_X^u$ or V-points on $W_X^s$. Since $\delta_{i_0}$ is a line segment by Lemma \ref{lemma:unstable_yaxis_first}, the only post-critical points on $\delta_{i_0}$ are $Z^{2i_0}$ and $Z^{2i_0+2}$. Assume that $Z$ is not a homoclinic point for $X$ and let us now focus on homoclinic points on $\delta_{i_0}$ that are V-points on $W_X^s$ (if such points exist).
	
	Assume first that all of these points lie in the fourth quadrant $\mathcal{Q}_4$. Notice that for all V-points $S$ in $\mathcal{Q}_4$, the second iterate $S^2$ lies again in $\mathcal{Q}_4$ unless $S$ is $V^{-n_0-1}$ (in that case, $V^{-n_0+1}$ lies in $\mathcal{Q}_3$). Assume for sake of contradiction that $S$ lies on $\delta_{i_0}$, and $S \neq V^{-n_0-1}$. Then $W_X^s$ intersects $\delta_{i_0+1}$ at $S^2 \in \mathcal{Q}_4$. Let $R$ be the intersection point of $\delta_{i_0}$ with the $y$-axis. Notice that in $\mathcal{Q}_4$, $\delta_{i_0+1}$ is above $[Z,R]^u$ since $\delta_{i_0+1}$ intersects the curve $x=1-\frac{a}{b}|y|$, and $[Z,R]^u$ lies below that curve. Therefore, to intersect $\delta_{i_0+1}$ in $\mathcal{Q}_4$, the stable manifold $W_X^s$ will transversely intersect $[Z,R]^u$. This yields a contradiction since the intersections of $W_X^u$ and $W_X^s$ apart from $X$ are tangential. Therefore, $S=V^{-n_0-1}$ is a homoclinic point on $\delta_{i_0}$. Then $M=V^{-n_0+1}$, and $V^{-n_0+1}$ lies on $\delta_{i_0+1}$ as a V-point on the zigzag part, which proves the claim in this case. 
	
	Now assume that there exist homoclinic points on $\delta_{i_0}$ that are V-points on $W_X^s$, and that also lie in $\mathcal{Q}_3$ or on the $y$-axis. For every $n\in\mathbb{N}_0$, let $\alpha_n := L_{a,b}^{-n}\bigl(\overline{V^{-n_0+2}V^{-n_0+1}}^{s}\bigr)\cap\mathcal{Q}_3$, and let $\beta_n$ be the portion of $\alpha_n$ below the curve $x=1-\frac{a}{b}|y|$; see Figure \ref{figure:thm_hom_stable_segments}. Observe that by the construction of the zigzag part of $W_X^s$, $\beta_n$ is non-empty for every $n\in\mathbb{N}_0$, and $\alpha_n$ is a straight line segment with endpoints lying on the coordinate axes for every $n\geqslant 2$. Moreover, notice that $L_{a,b}^{-1}(\alpha_n\setminus\beta_n)=\alpha_{n+1}$ for every $n\in\mathbb{N}_0$.
	
	Homoclinic points on $\delta_{i_0}$ in $\mathcal{Q}_3$ are forward images of the corresponding homoclinic point on $\gamma_{i_0-1}$ in $\mathcal{Q}_2$, that is, every homoclinic point on $\delta_{i_0}$ in $\mathcal{Q}_3$ also lies in $L_{a,b}(\mathcal{Q}_2)$. Therefore, every homoclinic point on $\delta_{i_0}$ in $\mathcal{Q}_3$ lies on some $\beta_k$. We claim that all such points lie on $\beta_0$, $\beta_1$ or $\beta_2$. Assume by contradiction that the converse holds, there is a homoclinic point on $\delta_{i_0}\cap\mathcal{Q}_3$ that lies on some $\beta_k$ for $k>2$. Then $\delta_{i_0+1} \subset W_X^u$ transversely intersects $\alpha_{k-1} \subset W_X^s$ in $\mathcal{Q}_3$ in order to intersect $\alpha_{k-2}$ and the curve $x=1-\frac{a}{b}|y|$. This is a contradiction with the assumption that all intersections of $W_X^s$ and $W_X^u$ apart from $X$ are tangential. It follows that $k\leqslant2$, all homoclinic points on $\delta_{i_0}\cap\mathcal{Q}_3$ lie on $\beta_0$, $\beta_1$ or $\beta_2$. This also completes the proof of the claim that the first homoclinic point on $\Delta$ in $\mathcal{Q}_3$, counting from $Z$, lies either on the zigzag part of $W_X^{s}$ or on $L_{a,b}^{-1}\bigl(\overline{V^{-n_0+1}V^{-n_0}}^{s}\bigr)$.
	
	We now prove claim (2), that is, if a homoclinic point on $\delta_{i_0}$ is a V-point on the $y$-axis, then that point is also an iterate of the point $Z$. Suppose that a homoclinic point $P_1$ on $\delta_{i_0}$ is a V-point on the $y$-axis. In the previous paragraph we proved that all homoclinic points on $\delta_{i_0}\cap\mathcal{Q}_3$ lie on the segments $\beta_0$, $\beta_1$ or $\beta_2$; see Figure \ref{figure:thm_hom_stable_segments}. It follows that $P_1$ lies on $\beta_2$, that is, $P_1$ is the preimage of $\overline{V^{-n_0+1}V^{-n_0}}^{s}\cap\{x\text{-axis}\}$. Let now $P_2$ be the preimage of $\overline{V^{-n_0}P_1}^{s}\cap\{x\text{-axis}\}$, and observe the triangle $P_1V^{-n_0-1}P_2$. Suppose by contradiction that $P_1$ is not a post-critical point on $\delta_{i_0}$. In order to tangentially intersect $W_X^s$ at $P_1$, the segment $\delta_{i_0}$ will transversely intersect one of the sides of the triangle $P_1V^{-n_0-1}P_2$ that does not lie on the $y$-axis, that is, one of the sides lying on the polygonal segment $[P_1,P_2]^{s}$. This is a contradiction with the assumption that all intersections of $W_X^u$ and $W_X^s$ apart from $X$ are tangential. Therefore, $P_1$ is a post-critical point on $\delta_{i_0}$ and thus an iterate of $Z$ ($Z^{2i_0}$ or $Z^{2i_0+2}$). This finishes the proof of claim (2).
		
	\begin{figure}[!ht]
	\begin{center}
	\begin{tikzpicture}[auto, scale=0.6]
			\tikzstyle{nodec}=[draw,circle,fill=black,minimum size=2pt,
			inner sep=0pt, label distance=2mm]
			\tikzstyle{nodeh}=[draw,circle,fill=white,minimum size=4pt,
			inner sep=0pt]
			\tikzstyle{dot}=[circle,draw=none,fill=none,minimum size=0pt,inner sep=2pt, outer sep=-1pt]

			\draw[->] (-11,0)--(9,0) node [below]{$x$};
			\draw[->] (0,-10.5)--(0,6.5) node [left]{$y$};
			\node[label={[xshift=-0.2cm, yshift=-0.7cm]$0$}] at (0,0) {};
			
			\coordinate (e0) at (0,0);
			\coordinate (ex) at (1,0);
			\coordinate (ey) at (0,1);
			
			\coordinate (s11) at (1.70611, 5.22029);
			\coordinate (s1) at (0,-3);
			\coordinate (s2) at (-1.37846, -1.49641);
			\coordinate (s3) at (-0.54099, -4.52417);
			\coordinate (s4) at (-3.34328, -0.88442);
			\coordinate (s5) at (-1.08857, -5.61932);
			\coordinate (s6) at (-8.91564, 1.49913);
			\coordinate (s7) at (0,-9.3);
			\coordinate (s8) at (1.3518, -8.56567);
			\coordinate (s9) at (0,-10);
			\coordinate (s10) at (-8.89105, 0.45318);
			
			\coordinate (u1) at (7.5,0);
			\coordinate (u2) at (6.95637, -3.18657);
			\coordinate (u3) at (5.57133, -5.31244);
			\coordinate (u4) at (1.86716, -7.30948);
			\coordinate (u5) at (-2.78, 5.8);
			\coordinate (u6) at (-5.56578, 4.78457);
			\coordinate (u7) at (-7.17629, 3.52838);
			\coordinate (u8) at (-7.28054, 1.22623);
			
			\coordinate (x) at (intersection of u1--u5 and s1--s11);
			
			\coordinate (p1) at (-2,-7.5);
			\coordinate (p2) at (-1,-7.5);
			\coordinate (q1) at (-2,-1.8);
			\coordinate (q2) at (-1,-1.8);
			
			\coordinate (m) at (intersection of s6--s7 and p1--p2);
			\coordinate (lm) at (intersection of s5--s6 and q1--q2);
			
			\draw[blue, thick] (u7)--(u6)--(u5)--(u1)--(u2)--(u3);
			\draw[blue, thick, dashed] (u7)--(u8);
			\draw[blue, thick, dashed] (u3)--(u4);
			\draw[blue, thick] (u8)--(lm);
			\draw[blue, thick] (u4) to node[dot]{$\delta_{i_0}$} (m);
			
			\draw[red, thick] (s11)--(s1)--(s2)--(s3)--(s4);
			\draw[red, thick, dashed] (s4)--(s5);
			\draw[red, thick] (s5)--(s6)--(s7)--(s8)--(s9)--(s10);	
			
			\node[nodec, color=red, label={[below right, color=red]$V$}] at (s1) {};
			\node[nodec, color=blue, label={[below right, color=blue]$Z$}] at (u1) {};
			\node[nodec, label={[above right]$X$}] at (x) {};
			\node[nodec, label={[above, xshift=2mm]$M$}] at (m) {};
			\node[nodec, label={[right, yshift=1mm]$M^1$}] at (lm) {};
			
			\node[dot, label=$\mathcal{G}$] (gg) at (5,-2) {};
			\node[dot, label=$L_{a,b}(\mathcal{G})$] (hh) at (-4,3.5) {};
			
			\begin{pgfonlayer}{bg}
				\fill[green!10!white] (x.center)--(u5.center)--(u6.center)--(u7.center)--(u8.center)--(lm.center)--(s5.center)--(s4.center)--(s3.center)--(s2.center)--(s1.center)--cycle;
				
				\fill[yellow!10!white] (x.center)--(u1.center)--(u2.center)--(u3.center)--(u4.center)--(m.center)--(s6.center)--(s5.center)--(s4.center)--(s3.center)--(s2.center)--(s1.center)--cycle;
			\end{pgfonlayer}	
			\end{tikzpicture}
	\end{center}
	\caption{Polygon $\mathcal{G}$ and its image $L_{a,b}(\mathcal{G})$, as in Lemma \ref{lem:main_thm_lemma2}. Point $M$ is the first homoclinic point on $\Delta$ in $\mathcal{Q}_3$, counting from $Z$.}
	\label{figure:thm_hom_case2}
	\end{figure}
	
	Finally, to prove claim (1), as before, let $M$ be the first homoclinic point on $\Delta$ lying in $\mathcal{Q}_3$, counting from $Z$. Let $\mathcal{G}$ be the polygon whose boundary is $\partial\mathcal{G}=[X,M]^{u}\cup[M,X]^{s}$; see Figure \ref{figure:thm_hom_case2}. Notice that the only V-points contained in $[M,X]^{s}$ are iterates of $V$ and possibly $M$ itself (the previous paragraph implies that in that case, $M$ coincides with some iterate of $Z$). Moreover, since $W_X^{u}$ does not transversely intersect the sides of $\mathcal{G}$ and $[M,M^2]^{u}$ is contained in that polygon, $\mathcal{G}$ contains $\Delta$ and all of its intersections with the $y$-axis. Similarly, $L_{a,b}(\mathcal{G})$ is a polygon with boundary $\partial L_{a,b}(\mathcal{G})=[X,M^1]^{u}\cup[M^1,X]^{s}$, and $L_{a,b}(\mathcal{G})$ contains $\Gamma$ and all of its intersections with the $y$-axis (notice that $[X,M^1]^{u}$ intersects the $y$-axis at $Z^{-1}$ only). This finishes the proof of the lemma.
	\end{proof}
	
	\begin{theorem} \label{thm:hom_tangential}
	Assume that all intersections of $W_X^s$ and $W_X^u$ are tangential except the one at $X$. Then the set of all homoclinic points for $X$ is one of the following: the orbit of $Z$, the orbit of $V$, or the union of these orbits.
	\end{theorem}
	
	\begin{proof}
	Lemma \ref{lemma:hom1} implies that it suf{}fices to observe all possible homoclinic points on $\overline{XV}^{s}$. Furthermore, because $\overline{XV^1}^{s}=\{X\}\cup\bigcup_{n=1}^{\infty}L_{a,b}^n\bigl(\overline{VV^1}^{s}\bigr)$, it is enough to observe homoclinic points on $\overline{VV^1}^{s}$. From Lemma \ref{lemma:hom_tang} it follows that two possibilities can occur in this case. Firstly, $V$ can be a homoclinic point as a V-point on $W_X^{s}$, and the claim of the theorem follows. Secondly, there is a post-critical point on $W_X^{u}$ lying on $\overline{VV^1}^{s}$ as a homoclinic point. 
	
	Consider this second case. Let $[V,V^{-n_0}]^s$ be the zigzag part of $W_X^s$ as in Lemma \ref{lem:zigzag}, let the point $M$ be as in Lemma \ref{lem:main_thm_lemma2}, $\delta_{i_0} \subset W_X^u$ as in Lemma \ref{lemma:unstable_yaxis_first}, and $\Delta \subset W_X^u$ as in (\ref{eq:Gamma_Delta}). We know that every post-critical point on $W_X^u$ is a forward iterate of a transverse intersection point of $W_X^u$ with the $y$-axis. These intersection points are all contained in polygons such that the boundary of each polygon consists of a polygonal line contained in $W_X^u$ and a polygonal line contained $[X,V^{-n_0-1}]^{s}$.  
	
	 More precisely, in the case when $\Delta$ does not intersect the $y$-axis, from Lemma \ref{lem:main_thm_lemma1} we obtain that the intersection points of $W_X^u$ with the $y$-axis are contained in the triangle $XVZ^1$. If $\Delta$ intersects the $y$-axis, by Lemma \ref{lem:main_thm_lemma2}(1) we see that these points are contained in the union $\mathcal{G}\cup L_{a,b}(\mathcal{G})$, where the boundary of the polygon $\mathcal{G}$ is $\partial\mathcal{G} = [X,M]^u\cup[M,X]^{s}$. Also notice that in both cases, the interiors of these polygons do not intersect $W_X^s$, and the only points in $W_X^u\cap\{y\text{-axis}\}$ that may lie in the boundary of these polygons are $Z^{-1}$ and, if $\Delta\cap\{y\text{-axis}\}\neq\emptyset$, the intersection of $\delta_{i_0}$ with the $y$-axis.
	 
	 Now, let $T$ be a post-critical point on $W_X^u$ that lies on $\overline{VV^1}^s$. Let $n\in\mathbb{N}$ be such that $W_X^u$ transversely intersects the $y$-axis at $T^{-n}$. Then by Lemma \ref{lemma:hom_tang}, $T^{-n}$ is also a V-point at which $L_{a,b}^{-n}\bigl(\overline{VV^1}^{s}\bigr)$ tangentially intersects $W_X^u$.
	 
	 However, we know that $T^{-n}$ is contained in a polygon as described above. Denote that polygon by $\mathcal{P}$. We distinguish between three cases: $T^{-n}\in\Int\mathcal{P}$, $T^{-n}\in\partial\mathcal{P}\cap W_X^u$, and $T^{-n}\in\partial\mathcal{P}\cap W_X^s$.
	 
	 If $T^{-n}\in\Int\mathcal{P}$, then $T^{-n}$ does not lie on $W_X^s$ since $\Int\mathcal{P}\cap W_X^s=\emptyset$.
	 
	 Now let $T^{-n}\in\partial\mathcal{P}\cap W_X^s$. Every V-point on $\partial\mathcal{P}\cap W_X^s$ is either (\emph{i}) a backward iterate of $V$ on the zigzag part of $W_X^s$ or (\emph{ii}) the V-point on $[V^{-n_0},V^{-n_0-1}]^s$ lying on the $y$-axis. If (\emph{i}), then we see that $V$ is a homoclinic point, and the claim of the theorem follows. In the case (\emph{ii}), Lemma \ref{lem:main_thm_lemma2}(2) implies that $T^{-n}$ coincides with $Z^{2i_0}$ or $Z^{2i_0+2}$, so we see that $Z$ is a homoclinic point.
	 
	 Finally, let $T^{-n}\in\partial\mathcal{P}\cap W_X^u$. Then $T^{-n}$ can be $Z^{-1}$, in which case $Z$ is a homoclinic point and the claim follows. Alternatively, $T^{-n}$ can be the intersection point of $\delta_{i_0}$ with the $y$-axis, in which case it again follows from Lemma \ref{lem:main_thm_lemma2}(2) that $T^{-n}$ is either $Z^{2i_0}$ or $Z^{2i_0+2}$. This finishes the proof.
	\end{proof}

	\subsection{Homoclinic intersections along a segment} \label{subsec:hom_intersec_segment}
	
	In order to prove the main Theorem \ref{thm:hom_T0V0}, it remains to consider the case when homoclinic points are not isolated in the relative topologies on $W_X^s$ and $W_X^u$. In other words, we consider parameter pairs for which $W_X^s$ and $W_X^u$ intersect along a (straight or polygonal) segment.
	
	We already know by Lemma \ref{lem:Ws+X_WuX_intersect_at_X_only} that all homoclinic intersections are contained in $W_X^{s-}$. The following lemma shows that these intersections are a proper subset of $W_X^{s-}$ and $W_X^u$.
	
	\begin{lem}  \label{lem:W_X_u_W_X_s-_do_not_coincide}
	$W_X^u$ and $W_X^{s-}$ do not coincide.
	\end{lem}
	
	\begin{proof}
	Seeking a contradiction, assume that the converse holds. Now observe the unstable manifold $W_Y^u$ of the other fixed point $Y$ in the third quadrant $\mathcal{Q}_3$, more precisely, the part of $W_Y^u$ which passes through $Y$ and lies on a straight line whose direction is the unstable eigenvector of $DL_{a,b}$ at $Y$. That part of $W_Y^u$ intersects the segment $\overline{VV^1}^s$ at the point
	\begin{equation} \label{eq:point_D}
	D=\left(\frac{a^2+2b-2-a\sqrt{a^2+4b}}{2\bigl(a^2-(1-b)^2\bigr)},\: \frac{-a^2+2b^2-2b-a\sqrt{a^2+4b}}{2\bigl(a^2-(1-b)^2\bigr)} \right).
	\end{equation}
	Therefore, $W_X^u$ and $W_X^{s-}$ coinciding would imply that the unstable manifolds of $X$ and $Y$ intersect, which is a contradiction. This finishes the proof.	 
	\end{proof}
	
	As a consequence, the set of homoclinic points for $X$ cannot contain any segments of the form $[A,A^1]^s$ for any $A\in W_x^{s-}$, nor those of the form $[B,B^2]^u$ for any $B\in W_X^u$. In particular, the set of homoclinic points for $X$ can only contain "true" segments (that have both endpoints).
	
	Recall the zigzag structure of $W_X^s$ from Lemma \ref{lem:zigzag}: $n_0$ denotes the smallest positive integer such that $V^{-n_0}$ lies in $\mathcal{Q}_2$, and $[V,V^{-n_0+1}]^s$ is a polygonal line contained in $\mathcal{Q}_3$.
	
	Denote by $E_0$ the intersection of $\overline{V^{-n_0+1}V^{-n_0}}^s$ with the negative $x$-axis. Then $[V^{-n_0},V^{-n_0-1}]^s$ is a polygonal line which intersects both the $y$-axis (at $E_0^{-1}$) and the negative $x$-axis at a point which we denote by $E_1$. In general, all subsequent preimages of $\overline{V^{-n_0+1}V^{-n_0}}^s$ intersect both the $y$-axis and the negative $x$-axis. For a positive integer $i$, we denote by $E_i$ the intersection point of $[V^{-n_0-i+1},V^{-n_0-i}]^s$ with the negative $x$-axis; see Figure \ref{figure:points_Eij}. Notice that for every $i\in\mathbb{N}_0$, $E_i^{-1}$ is a $V$-point on $W_X^{s}$ lying on the negative $y$-axis, $E_i^{-2}$ lies in $\mathcal{Q}_2$, and $E_i^{-3}$ lies in $\mathcal{Q}_4$ or $\mathcal{Q}_1$. Also notice that every $V$-point of $W_X^s$ in $\mathcal{Q}_4$ is either a backward iterate of $V$ or that of one of the points $E_i$. In addition, for every $i,j\in\mathbb{N}_0$, there exist unique $p,q\in\mathbb{N}$ such that $E_i^{-j}$ is contained in $[D^{-p},V^{-q}]^{s}\cup[V^{-q},D^{-p-1}]^{s}$, where $D$ is the intersection point of $W_X^{s}$ and $W_Y^{u}$ in $\mathcal{Q}_3$ defined by (\ref{eq:point_D}).
	
	\begin{figure}[!ht]
	\begin{center}
	\begin{tikzpicture}[auto, scale=0.6]
			\tikzstyle{nodec}=[draw,circle,fill=black,minimum size=2pt,
			inner sep=0pt, label distance=1.5mm]			

			\draw[->] (-15,0)--(12,0) node [below]{$x$};
			\draw[->] (0,-14.5)--(0,17) node [left]{$y$};
			\node[label={[xshift=-0.2cm, yshift=-0.6cm]$0$}] at (0,0) {};
			
			\coordinate (origin) at (0,0);
			\coordinate (ex) at (1,0);
			\coordinate (ey) at (0,1);
			
			
			\coordinate (start) at (3,2);
			\coordinate (v) at (0,-2.7);
			\coordinate (vm1) at (-2.29,-0.79);
			\coordinate (vm2) at (-1,-4);
			\coordinate (vm3) at (-3.86,-0.36);
			\coordinate (vm4) at (-0.28,-6.68);
			\coordinate (vm5) at (-7.26,3.59);
			\coordinate (vm6) at (3.85,-6.89);
			\coordinate (vm7) at (-9.21,13.11);
			\coordinate (vm8) at (8.59,-1.75);
			\coordinate (vm9) at (-2,14);
			\coordinate (vm10) at (6.88,4.06);
			
			\coordinate (e0) at (intersection of vm4--vm5 and origin--ex);
			\coordinate (e0m1) at (0,-8.74);
			\coordinate (e0m2) at (-10.93,8.05);
			\coordinate (e0m3) at (6.92,-6.09);
			\coordinate (e0m4) at (-8.38,15.89);
			\coordinate (e0m5) at (10.07,-1.11);
			
			\coordinate (e1) at (intersection of vm5--e0m1 and origin--ex);
			\coordinate (e1m1) at (0,-9.66);
			\coordinate (e1m2) at ($1.1018*(e0m2)-0.1018*(vm1)$);
			\coordinate (e1m3) at (7.6,-6.53);
			\coordinate (e1m4) at (-8.58,16.89);
			\coordinate (e1m5) at (10.96,-0.82);
			
			\coordinate (e2) at (intersection of e0m2--e1m1 and origin--ex);
			\coordinate (e2m1) at (0,-10.79);
			\coordinate (e2m2) at ($1.2093*(e0m2)-0.2093*(vm1)$);
			\coordinate (e2m3) at ($1.35*(e1m3)-0.35*(e0m3)$);
			
			\coordinate (e3) at (intersection of e1m2--e2m1 and origin--ex);
			\coordinate (e3m1) at (0,-11.71);
			\coordinate (e3m2) at ($1.29638*(e0m2)-0.29638*(vm1)$);
			\coordinate (e3m3) at ($2.1818*(e1m3)-1.1818*(e0m3)$);
			
			\coordinate (e4) at (intersection of e2m2--e3m1 and origin--ex);
			\coordinate (e4m1) at (0,-12.99);
			\coordinate (e4m2) at ($1.4468*(e0m2)-0.4468*(vm1)$);
			
			\coordinate (e5) at (intersection of e3m2--e4m1 and origin--ex);
			\coordinate (e5m1) at (0,-14);
			
			\coordinate (e6) at (intersection of e4m2--e5m1 and origin--ex);
			
			\draw[red, thick] (start) node[right]{$W_X^s$}--(v)--(vm1)--(vm2)--(vm3)--(vm4)--(vm5)--(e0m1)--(vm6)--(e1m1)--(e0m2)--(vm7)--(e1m2)--(e2m1)--(e0m3)--(vm8)--(e1m3)--(e3m1)--(e2m2)--(e0m4)--(vm9)--(e1m4)--(e3m2)--(e4m1)--(e2m3)--(e0m5)--(vm10)--(e1m5)--(e3m3)--(e5m1)--(e4m2);
			
			\node[nodec, color=red, label={[right, color=red]$V$}] at (v) {};
			\node[nodec, color=red, label={[above, color=red]$V^{-1}$}] at (vm1) {};
			\node[nodec, color=red, label={[right, color=red]$V^{-2}$}] at (vm2) {};
			\node[nodec, color=red, label={[above, xshift=1.5mm, color=red]$V^{-3}$}] at (vm3) {};
			\node[nodec, color=red, label={[right, color=red]$V^{-4}$}] at (vm4) {};
			\node[nodec, color=red, label={[above, color=red]$V^{-5}$}] at (vm5) {};
			\node[nodec, color=red, label={[above, color=red]$V^{-6}$}] at (vm6) {};
			\node[nodec, color=red, label={[right, color=red]$V^{-7}$}] at (vm7) {};
			\node[nodec, color=red, label={[left, color=red]$V^{-8}$}] at (vm8) {};
			\node[nodec, color=red, label={[below, color=red]$V^{-9}$}] at (vm9) {};
			\node[nodec, color=red, label={[above, color=red]$V^{-10}$}] at (vm10) {};
			
			\node[nodec, color=black, label={[above, color=black]$E_0$}] at (e0) {};
			\node[nodec, color=black, label={[below, color=black]$E_1$}] at (e1) {};
			\node[nodec, color=black, label={[above, color=black]$E_2$}] at (e2) {};
			\node[nodec, color=black, label={[below, color=black]$E_3$}] at (e3) {};
			\node[nodec, color=black, label={[above, color=black]$E_4$}] at (e4) {};
			\node[nodec, color=black, label={[below, color=black]$E_5$}] at (e5) {};
			\node[nodec, color=black, label={[above, color=black]$E_6$}] at (e6) {};
			
			\node[nodec, color=black, label={[right, color=black]$E_0^{-1}$}] at (e0m1) {};
			\node[nodec, color=black, label={[right, color=black]$E_1^{-1}$}] at (e1m1) {};
			\node[nodec, color=black, label={[right, color=black]$E_2^{-1}$}] at (e2m1) {};
			\node[nodec, color=black, label={[right, color=black]$E_3^{-1}$}] at (e3m1) {};
			\node[nodec, color=black, label={[right, color=black]$E_4^{-1}$}] at (e4m1) {};
			\node[nodec, color=black, label={[right, color=black]$E_5^{-1}$}] at (e5m1) {};
			
			\node[nodec, color=black, label={[right, color=black]$E_0^{-2}$}] at (e0m2) {};
			\node[nodec, color=black, label={[right, color=black]$E_1^{-2}$}] at (e1m2) {};
			\node[nodec, color=black, label={[right, color=black]$E_2^{-2}$}] at (e2m2) {};
			\node[nodec, color=black, label={[right, color=black]$E_3^{-2}$}] at (e3m2) {};
			\node[nodec, color=black, label={[right, color=black]$E_4^{-2}$}] at (e4m2) {};
			
			\node[nodec, color=black, label={[above, color=black]$E_0^{-3}$}] at (e0m3) {};
			\node[nodec, color=black, label={[below left, color=black]$E_1^{-3}$}] at (e1m3) {};
			\node[nodec, color=black, label={[above right, color=black]$E_2^{-3}$}] at (e2m3) {};
			\node[nodec, color=black, label={[right, color=black]$E_3^{-3}$}] at (e3m3) {};
			
			\node[nodec, color=black, label={[below, color=black]$E_0^{-4}$}] at (e0m4) {};
			\node[nodec, color=black, label={[left, color=black]$E_1^{-4}$}] at (e1m4) {};
			
			\node[nodec, color=black, label={[left, color=black]$E_0^{-5}$}] at (e0m5) {};
			\node[nodec, color=black, label={[below right, color=black]$E_1^{-5}$}] at (e1m5) {};

			
			\coordinate (Ystart) at (3.29,0);
			\coordinate (Yend) at (-7.66,-5.46);
			
			\draw[teal, thick] (Ystart)--(Yend) node[left]{$W_Y^u$};
			
			\coordinate (d) at (intersection of Ystart--Yend and start--v);
			\coordinate (dm1) at (intersection of Ystart--Yend and v--vm1);
			\coordinate (dm2) at (intersection of Ystart--Yend and vm1--vm2);
			\coordinate (dm3) at (intersection of Ystart--Yend and vm2--vm3);
			\coordinate (dm4) at (intersection of Ystart--Yend and vm3--vm4);
			\coordinate (dm5) at (intersection of Ystart--Yend and vm4--vm5);
			\coordinate (dm6) at (intersection of Ystart--Yend and e1--e0m1);
			\coordinate (dm7) at (intersection of Ystart--Yend and e2--e1m1);
			\coordinate (dm8) at (intersection of Ystart--Yend and e3--e2m1);
			\coordinate (dm9) at (intersection of Ystart--Yend and e4--e3m1);
			\coordinate (dm10) at (intersection of Ystart--Yend and e5--e4m1);
			\coordinate (dm11) at (intersection of Ystart--Yend and e6--e5m1);
			
			\node[nodec, color=teal, label={[above, color=teal]$D$}] at (d) {};
			\node[nodec, color=teal, label={[above, color=teal]$D^{-1}$}] at (dm1) {};
			\node[nodec, color=teal, label={[below right, color=teal]$D^{-2}$}] at (dm2) {};
			\node[nodec, color=teal, label={[above, color=teal]$D^{-3}$}] at (dm3) {};
			
			\node[nodec, color=teal, label={[above, color=teal]$D^{-9}$}] at (dm9) {};
			\node[nodec, color=teal, label={[below right, xshift=-1.5mm, color=teal]$D^{-10}$}] at (dm10) {};
			\node[nodec, color=teal, label={[left, color=teal]$D^{-11}$}] at (dm11) {};			

			\end{tikzpicture}
	\end{center}
	\caption{Sketch of points $E_i^{-j}$ (black) for parameters $a=1$, $b=0.95$. Point $D$ is the intersection of $W_X^s$ and $W_Y^u$ in the third quadrant $\mathcal{Q}_3$, defined as in (\ref{eq:point_D}).}
	\label{figure:points_Eij}
	\end{figure}
	
	Recall that $\partial\mathcal{H}$ denotes the boundary of the set of all parameters $(a,b)$, $a>0$, $0<b<1$, such that the Lozi map $L_{a,b}$ has homoclinic points for the fixed point $X$. 
	
	\begin{theorem} \label{thm:hom_segments}
	Let $(a,b)\in\partial\mathcal{H}$ and assume that $W_X^{s-}$ and $W_X^u$ intersect along a segment $\beta$. Then:
	\begin{enumerate}
		\item one endpoint of $\beta$ is an iterate of $Z$, and the other one is an iterate of $V$,
		\item the set of homoclinic points for $X$ is the union of orbits of points lying on $\beta$, that is, there are no other isolated tangential homoclinic points for $X$.
	\end{enumerate}
	\end{theorem}
	
	\begin{proof}
	We will first prove claim $(1)$. Let $W_X^u$ and $W_X^{s-}$ intersect along a segment $\beta$. By Lemma \ref{lem:W_X_u_W_X_s-_do_not_coincide}, we know that $\beta$ has two endpoints. In addition, since $\beta\subset W_X^{u}$, there exists a $k\in\mathbb{N}_0$ such that $L_{a,b}^{-k}(\beta)\subset\overline{XZ}^u$. Denote $\alpha:=L_{a,b}^{-k}(\beta)$. Then $\alpha$ is a straight line segment whose endpoints are $V$-points on $W_X^{s-}$. Therefore, the endpoints of $\alpha$ can be one of the following:
	\begin{enumerate}[(a)]
		\item backward iterates of $V$;
		\item backward iterates of intersections of $W_X^{s-}$ with the positive $x$-axis; since the intersection point of $W_X^{s-}$ with the positive $y$-axis also lies on $W_X^u$ as a homoclinic point, that point is $Z$;
		\item backward iterates of points $E_i$ (intersections of $W_X^{s-}$ with the negative $x$-axis, as defined above).
	\end{enumerate}
	
	We will now show that case (c) is not possible. Assume by contradiction that one of the endpoints of $\alpha$ is the point $E_i^{-j}$, for some $i,j\in\mathbb{N}_0$, and that this point does not coincide with any iterate of $Z$. Then there exists an $m\in\mathbb{N}$ such that $\alpha$ is contained in $\overline{Z^{-2m-2}Z^{-2m}}^u$, and $\alpha$ does not contain the endpoints of that segment. Consequently, $L_{a,b}^{2m}(\alpha)$ is a straight line segment lying on $\overline{ZZ^2}^u$ in $\mathcal{Q}_4$, and one of the endpoints of $L_{a,b}^{2m}(\alpha)$ is $E_i^{-j+m}$.
	
	Let the point $D$ be defined by (\ref{eq:point_D}), and let $p,q\in\mathbb{N}$ be such that $E_i^{-j+m}$ is contained in $[D^{-p},V^{-q}]^{s}\cup[V^{-q},D^{-p-1}]^{s}$. Observe the polygon $\mathcal{P}$ whose boundary is $\partial\mathcal{P}=[V^{-q},D^{-p-1}]^{s}\cup\overline{D^{-p-1}D^{-p}}\cup[D^{-p},V^{-q}]^{s}$. The point $E_i^{-j+m+2}$ lies outside of $\mathcal{P}$. Therefore, in order to pass through that point, $[Z^2,Z^4]^{u}$ will intersect the interior $\Int\mathcal{P}$ and consequently, it intersects transversely the boundary of $\mathcal{P}$. Since the boundary $\partial\mathcal{P}$ is contained in $W_X^s\cup W_Y^u$, $[Z^2,Z^4]^u$ can intersect only the portion of $\partial\mathcal{P}$ that belongs to $W_X^s$. This implies that $W_X^u$ and $W_X^s$ intersect transversely, which is a contradiction with Lemma \ref{lemma:hom_tang}. Therefore, backward iterates of points $E_i$ cannot be the endpoints of $\alpha$.
	
	It follows that the endpoints of $\alpha$ can only be iterates of $Z$ and $V$ (cases (a) and (b)). Due to Lemma \ref{lem:W_X_u_W_X_s-_do_not_coincide}, the endpoints of $\alpha$ cannot both be iterates of $Z$, nor can they both be iterates of $V$. Therefore, one endpoint is an iterate of $Z$, and the other one an iterate of $V$, which proves claim (1).
	
	Let us now prove claim (2). For every $n\in\mathbb{N}_0$, define
	\begin{equation*}
	\delta_n=[Z^{2n},Z^{2n+2}]^u, \quad \displaystyle\Delta=\bigcup_{n=0}^{\infty}\delta_n,
	\end{equation*}
as in (\ref{eq:gamma_n_delta_n}) and (\ref{eq:Gamma_Delta}). Notice that if $W_X^s$ and $W_X^u$ intersect along a segment, then $\Delta$ intersects the $y$-axis: otherwise, $Z$ would lie as a homoclinic point on $\overline{VV^1}^s$ like in Lemma \ref{lem:main_thm_lemma1} (see Figure \ref{figure:thm_hom_case1}), and there would not exist a segment of homoclinic points joining $Z$ with an iterate of $V$.
	
	Let $i_0$ be the smallest index $i\in\mathbb{N}_0$ such that $\delta_i$ intersects the $y$-axis. By Lemma \ref{lemma:unstable_yaxis_first}, we know that $\delta_{i_0}$ is a straight line segment, $\delta_{i_0}=\overline{Z^{2i_0}Z^{2i_0+2}}^u$, and $Z^{2i_0}$, $Z^{2i_0+2}$ lie in $\mathcal{Q}_4$, $\mathcal{Q}_3$, respectively. 
	
	Now let $V^k$ be an iterate of $V$ such that $W_X^u$ and $W_X^s$ intersect along $[Z^{2i_0}, V^k]^u=[Z^{2i_0},V^k]^s$. We claim that $V^k$ cannot lie in $\mathcal{Q}_3$. Assume by contradiction that the converse holds. Then the segment $[Z^{2i_0},V^k]^u$ intersects the $y$-axis. Since $[Z^{2i_0},V^k]^u\subset\delta_{i_0}$, it follows that $[Z^{2i_0},V^k]^u$ is itself a straight line segment. On the other hand, since $[Z^{2i_0},V^k]^s\subset W_X^s$ intersects the $y$-axis, it is a polygonal line which breaks at the intersection point with the $y$-axis. This a contradiction, so it follows that $V^k$ does not lie in $\mathcal{Q}_3$, that is, it lies in $\mathcal{Q}_4$.
	
	Considering the segment $[Z^{2i_0+2},V^{k+2}]^u=[Z^{2i_0+2},V^{k+2}]^s$ and using the same argument, we obtain that $V^{k+2}$ lies in $\mathcal{Q}_3$. Therefore, $V^{k}=V^{-n_0-1}$ and $V^{k+2}=V^{-n_0+1}$, where $n_0\in\mathbb{N}$ is such that $[V,V^{-n_0}]^s$ is the zigzag part of $W_X^s$ from Lemma \ref{lem:zigzag}.
	
	Now observe the polygon $\mathcal{T}$ with boundary
	\begin{equation*}
	\partial\mathcal{T}=[Z^{2i_0},V^{-n_0+1}]^s\cup[V^{-n_0+1},Z^{2i_0}]^u,
	\end{equation*}	 
and the union
	\begin{equation*}
	\displaystyle\mathcal{U}=\bigcup_{n=0}^{\infty}L_{a,b}^n(\mathcal{T}).
	\end{equation*}
Notice that $\mathcal{U}$ contains all intersections of $W_X^u$ with the $y$-axis apart from $Z^{-1}$, and the only V-points of $W_X^s$ in $\mathcal{U}$ are iterates of $V$ ($V,V^{-1},\ldots,V^{-n_0+1},V^{-n_0}$) and $E_0^{-1}$.
	
	Seeking a contradiction, assume that $W_X^s$ and $W_X^u$ intersect along a segment, and that there exists an isolated tangential homoclinic point $A$ for $X$. By possibly replacing $A$ with one of its iterates, we may assume that $A$ lies on $\overline{VX}^s$, in which case $A$ is a post-critical point on $W_X^u$. Thus, there exists a $k\in\mathbb{N}$ such that $A^{-k}$ is a transverse intersection point of $W_X^u$ with the $y$-axis, and consequently, $A^{-k}$ is a V-point on $W_X^s$.
	
	Since $A^{-k}\in\mathcal{U}$, that point is either $E_0^{-1}$ or an iterate of $V$. If $A^{-k}$ would be an iterate of $V$, then $A^{-k}$ would not be an isolated homoclinic point since all iterates of $V$ are endpoints of segments of homoclinic points by claim (1) of the theorem. If $A^{-k}$ would be $E_0^{-1}$, then $W_X^s$ would intersect $\delta_{i_0}$ tangentially at $E_0^{-1}$ and, as in the proof of claim (1), $\delta_{i_0+1}$ would intersect $\overline{V^{-n_0}E_0^{-1}}^s$ transversely. This is a contradiction, so it follows that there are no isolated tangential homoclinic points. This completes the proof of claim (2) and the proof of the theorem.     
	\end{proof}
	
	Figure \ref{fig:homoclinic_segments_example} presents an example of homoclinic intersections of $W_X^u$ and $W_X^s$ along a segment, for numerically obtained values of parameters.
	
	\begin{rem} \label{rem:misiurewicz_no_segments}
	The result obtained in Theorem \ref{thm:hom_segments} is especially surprising when compared to the parameters considered in \cite{misiurewicz1980strange}, in particular, the Misiurewicz parameter set.
	
	Consider parameter pairs from the set $\mathcal{M}_{1,2}=\{(a,b)\in\mathbb{R}^2\colon 0<b<1,\; a>0,\; a>b+1\}$. Assume that for one such parameter pair, $W_X^{u}$ and $W_X^s$ intersect along a segment. That segment contains a point $A$ such that $L_{a,b}$ is dif\mbox{}ferentiable at all points on the orbit of $A$: namely, the set of points for which $L_{a,b}$ is not dif\mbox{}ferentiable at all points of their orbits is countable, and the aforementioned segment contains uncountably many points in the plane.
	
	The hyperbolicity theorem \cite[Theorem~3]{misiurewicz1980strange} now implies that the tangent space at the point $A$ can be split into one-dimensional stable and unstable subspaces, which contradicts the assumption that $W_X^s$ and $W_X^u$ coincide along a segment containing $A$. Therefore, for $(a,b)\in\mathcal{M}_{1,2}$, the stable and unstable manifold of $X$ do not intersect along a segment.   
	\end{rem}
	
	\begin{figure}
	\begin{center}
		\includegraphics[width=\linewidth]{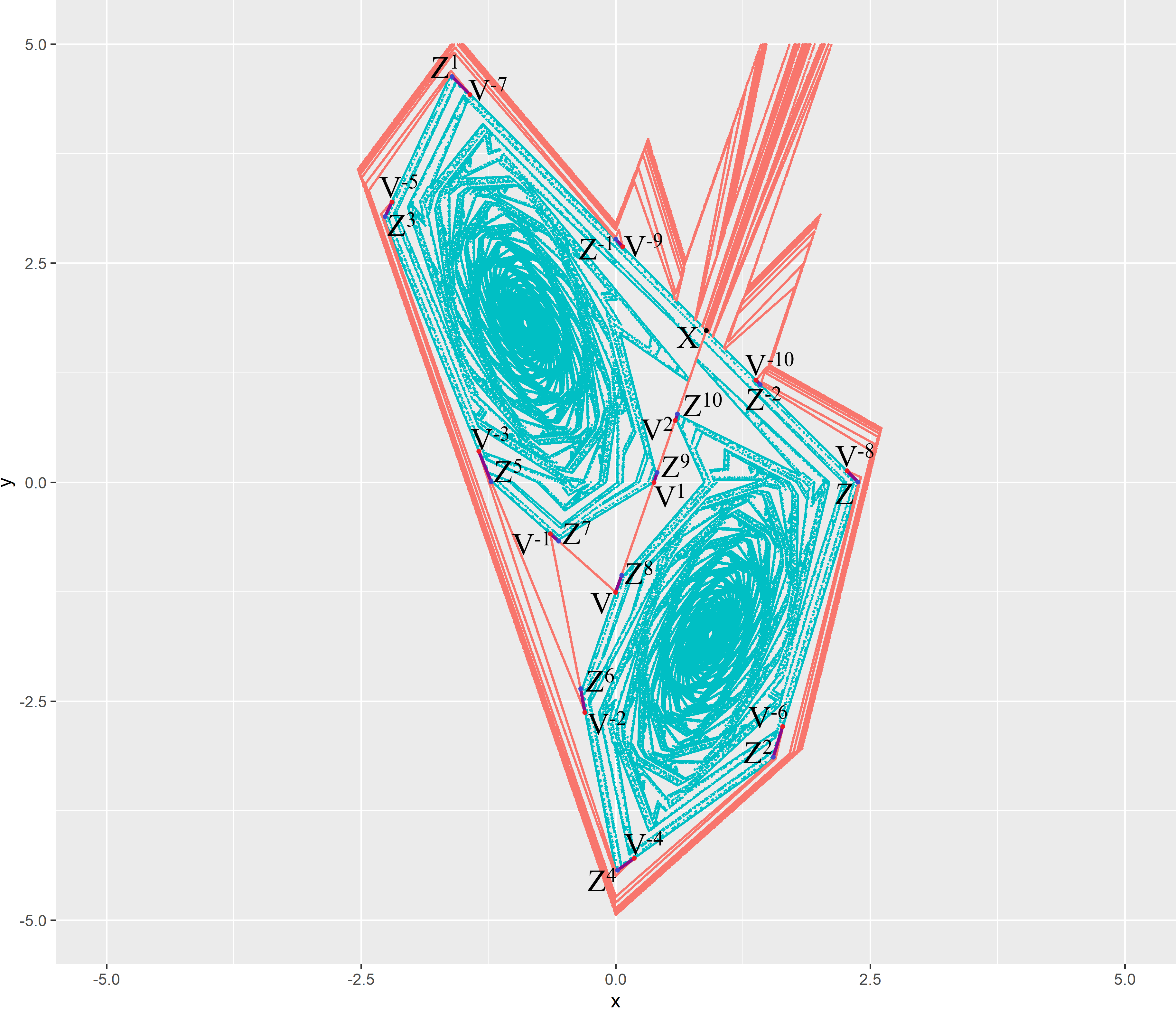}
	\end{center}
	\caption{The stable (red) and unstable (blue) manifold of $X$ for numerically obtained parameter values $a=1.0990404022941$, $b=0.975012556912299$. Every homoclinic point for $X$ belongs to the orbit of a point lying on the segment $\overline{VZ^8}$.}
	\label{fig:homoclinic_segments_example}
	\end{figure}
	
	We conclude this section with the proof of the main theorem.
	
	\begin{proof}[Proof of Theorem \ref{thm:hom_T0V0}]
	Let $(a,b)\in\partial\mathcal{H}$. For such a parameter pair, Lemma \ref{lemma:hom_tang} implies that $W_X^u$ and $W_X^s$, apart from $X$, intersect tangentially or along a segment.
	
	If all intersections of $W_X^s$ and $W_X^u$ are tangential, Theorem \ref{thm:hom_tangential} implies that the set of homoclinic points is the orbit of $Z$, orbit of $V$, or the union of these orbits. 
	
	If $W_X^s$ and $W_X^u$ intersect along a segment, Theorem \ref{thm:hom_segments} implies that the set of homoclinic points consists exactly of those points whose orbit intersects a segment joining an iterate of V with an iterate of $Z$. By possibly iterating this segment backward or forward a sufficient number of times, we conclude that the orbit of every homoclinic point also intersects a segment joining $V$ with an iterate of $Z$. This completes the proof.
	\end{proof}

	\section{Boundary curves in the parameter space}\label{sec:examples_bd_curves}
	
	After considering all possible homoclinic points for $X$ in the border case of their existence, we will now compute and describe some curves in the parameter space which represent a portion of that border, i.e., the locus of tangential homoclinic points for $X$ (Lemma \ref{lemma:hom_tang}). We obtain the equations of those curves in the form
	\begin{equation*}
	C_n\ldots\quad P_n(a,b)+Q_n(a,b)\sqrt{a^2+4b}=0,
	\end{equation*}
	where $P_n$ and $Q_n$ are polynomials in $a$ and $b$. The values of some $P_n$ and $Q_n$ are presented in Appendix \ref{appendix:curves_Cn_equations}. Since these equations can be written in the form $(P_n(a,b))^2-(Q_n(a,b))^2(a^2+4b)=0$, we see that these curves are algebraic curves. They are shown in the parameter space in Figure \ref{figure:homoclinic_border_curves}, and the corresponding configurations of $W_X^s$ and $W_X^u$ for a few boundary curves can be seen in Figure \ref{figure:homoclinic_border_WsWu_config}.
	
	\begin{figure}[!ht]
	\begin{center}
	\includegraphics[scale=1]{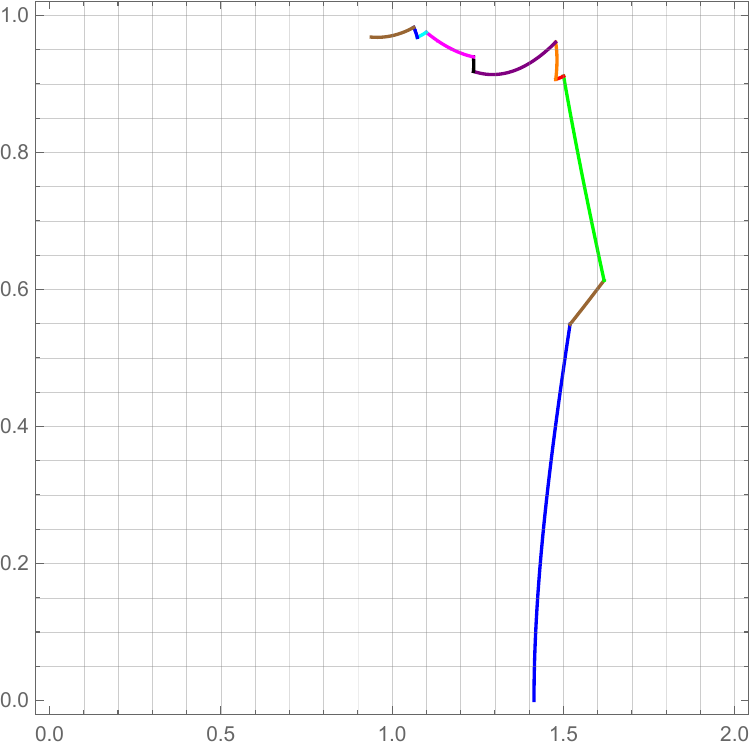}
	\end{center}
	\caption{Borders for the region of existence of homoclinic points for $X$: curves $C_1$ (blue), $C_2$ (brown), $C_3$ (green), $C_4$ (red), $C_5$ (orange), $C_6$ (purple), $C_7$ (black), $C_8$ (magenta), $C_9$ (cyan), $C_{10}$ (blue) and $C_{11}$ (brown). The values of parameter $a$ are presented on the horizontal and those of $b$ on the vertical axis.}	
	\label{figure:homoclinic_border_curves}
	\end{figure}
	
	By Theorem \ref{thm:hom_T0V0}, we know that the set of homoclinic points in the border case contains iterates of $Z$ or $V$. Moreover, Lemma \ref{lemma:hom1} implies that it suf{}fices to consider homoclinic points on the straight line segment $\overline{XV}^s$ in order to describe the boundary curves $C_n$.
	
	In the degenerate case $b = 0$, the Lozi map is a tent map, $L_{a,0} = T_a$, and a tangential homoclinic point appears for $a = \sqrt{2}$ when the second iterate of the critical value of the tent map coincides with the positive fixed point, i.e., in notation of Lozi maps, when $Z^2 = X$. Since for $b > 0$, the point $Z^2$ lies in the lower half-plane, the equation of the first boundary curve $C_1$ can be derived from the condition
	\begin{equation*}
	Z^2 \in [V, V^1]^{s}.
	\end{equation*}	  
	The corresponding part of the border of existence of homoclinic points for $X$ is only a portion of the curve $C_1$, starting at $(a_0,b_0)=(\sqrt{2},0)$, and ending at $(a_1,b_1)$ for which $Z^2$ coincides with $V$; see Figure \ref{figure:homoclinic_border_WsWu_config}(a).

	The equation of the second boundary curve $C_2$ is obtained from the condition
	\begin{equation*}
	V \in [Z^2, T]^{u},
	\end{equation*}	 
where $T$ is the first post-critical point on $[Z^2,Z^4]^u$, counting from $Z^2$. The endpoints of the curve $C_2$ are $(a_1,b_1)$ and $(a_2,b_2)$, a point in the parameter space for which $Z^4$ lies on $[V,V^{1}]^s$, and $V$ still lies on $[Z^2,T]^u$; see Figure \ref{figure:homoclinic_border_WsWu_config}(b) and (c).

	The third boundary curve $C_3$ is determined by
	\begin{equation*}
	Z^4 \in [V,V^{1}]^{s},
	\end{equation*}
and apart from $(a_2,b_2)$, the other endpoint of that curve is $(a_3,b_3)$ for which the homoclinic point $Z^4$ remains on $[V,V^{1}]^{s}$, but $Z^2$ also lies on the $y$-axis; see Figure \ref{figure:homoclinic_border_WsWu_config}(d) and (e).

	In general, on the boundary curves, for some $i \in \mathbb{N}$, at least one of the following conditions is satisfied:
	\begin{itemize}
		\item $Z^{2i} \in [V,V^1]^{s}$,
		\item $V \in [Z^{2i},T]^{u}$, where $T$ is the first post-critical point on $[Z^{2i},Z^{2i+2}]^{u}$ counting from $Z^{2i}$.
	\end{itemize}
	We remark that for some parameter pairs on the boundary curves, $W_X^u$ and $W_X^s$ may intersect along a segment, as it is shown in Figure \ref{fig:homoclinic_segments_example} (for parameter values on the curve $C_9$) and discussed in Subsection \ref{subsec:hom_intersec_segment}.
	
In addition, for a fixed $i$, the first condition can be satisfied on more than one boundary curve. Moreover, for every $n \in \mathbb{N}$, the endpoint $(a_n,b_n)$ of the boundary curves $C_{n}$ and $C_{n+1}$ is a parameter pair for which either both of the above conditions are satisfied (as in Figure \ref{figure:homoclinic_border_WsWu_config}(c)), $Z^{2j}$ lies on the $y$-axis (Figure \ref{figure:homoclinic_border_WsWu_config}(e)), or $V=Z^{2j}$ (Figure \ref{figure:homoclinic_border_WsWu_config}(f)), for some $j \in \mathbb{N}$. The approximative values of $a_n$ and $b_n$ for some boundary curves are given in Table \ref{table:hom_borders_anbn}.

\begin{table}
	\caption{Approximative values of $(a_n,b_n)$, the intersection points of $C_n$ and $C_{n+1}$ in the parameter space.}
		\renewcommand{\arraystretch}{1.25}
		\begin{tabular}{|c|c|c|}
		\hline
		$C_n$ & $a_n$ & $b_n$ \\
		\hline \hline
		$C_1$ & $1.51950144$ & $0.549133899$ \\
		\hline
		$C_2$ & $1.61870652$ & $0.613234325$ \\
		\hline
		$C_3$ & $1.50065366$ & $0.911203728$ \\
		\hline
		$C_4$ & $1.4778227$ & $0.906571953$ \\
		\hline
		$C_5$ & $1.47728163$ & $0.960699507$ \\
		\hline
		$C_6$ & $1.23772202$ & $0.918152634$ \\
		\hline
		$C_7$ & $1.23744761$ & $0.939287819$ \\
		\hline
		$C_8$ & $1.09974148$ & $0.975240539$ \\
		\hline
		$C_9$ & $1.07356315$ & $0.967833586$ \\
		\hline
		$C_{10}$ & $1.06317799$ & $0.982377305$ \\
		\hline
		$C_{11}$ & $0.939255378$ & $0.968217486$ \\
		\hline
		\end{tabular}
		\renewcommand{\arraystretch}{1}
	\label{table:hom_borders_anbn}
	\end{table}

	\begin{rem}
	If we write down the equation $P_1(a,b)+Q_1(a,b)\sqrt{a^2+4b}=0$ of the curve $C_1$ in its algebraic form, we obtain
	\begin{equation*}
	2a^4-4a^2-3a^2b+4b^3=0.
	\end{equation*}
This equation corresponds to the curve
	\begin{equation*}
	2a=\sqrt{3b^2+4+\sqrt{(3b^2+4)^2-32b^3}}
	\end{equation*}
used in \cite{misiurewicz1980strange} as the fourth condition for the construction of the Misiurewicz parameter set.
	\end{rem}        

	\begin{figure}[hbt!]
		\centering
		\begin{tabular}{cc}
			\raisebox{4cm}{(a)} \includegraphics[trim=7cm 7cm 7cm 7cm, clip=true,width=7cm,height=4.6cm]{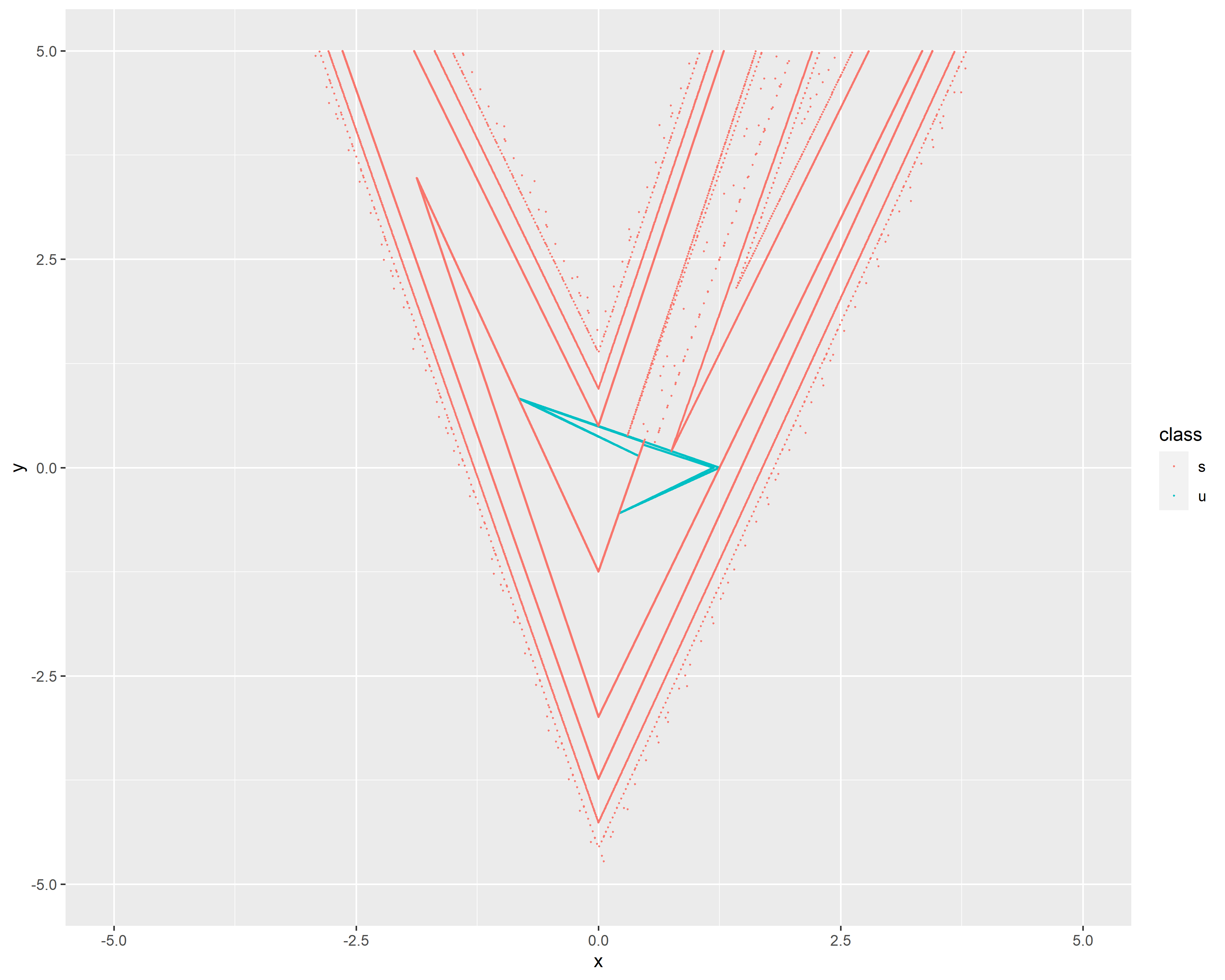} & 
			\raisebox{4cm}{(b)} \includegraphics[trim=6cm 6cm 6cm 6cm, clip=true,width=7cm,height=4.6cm]{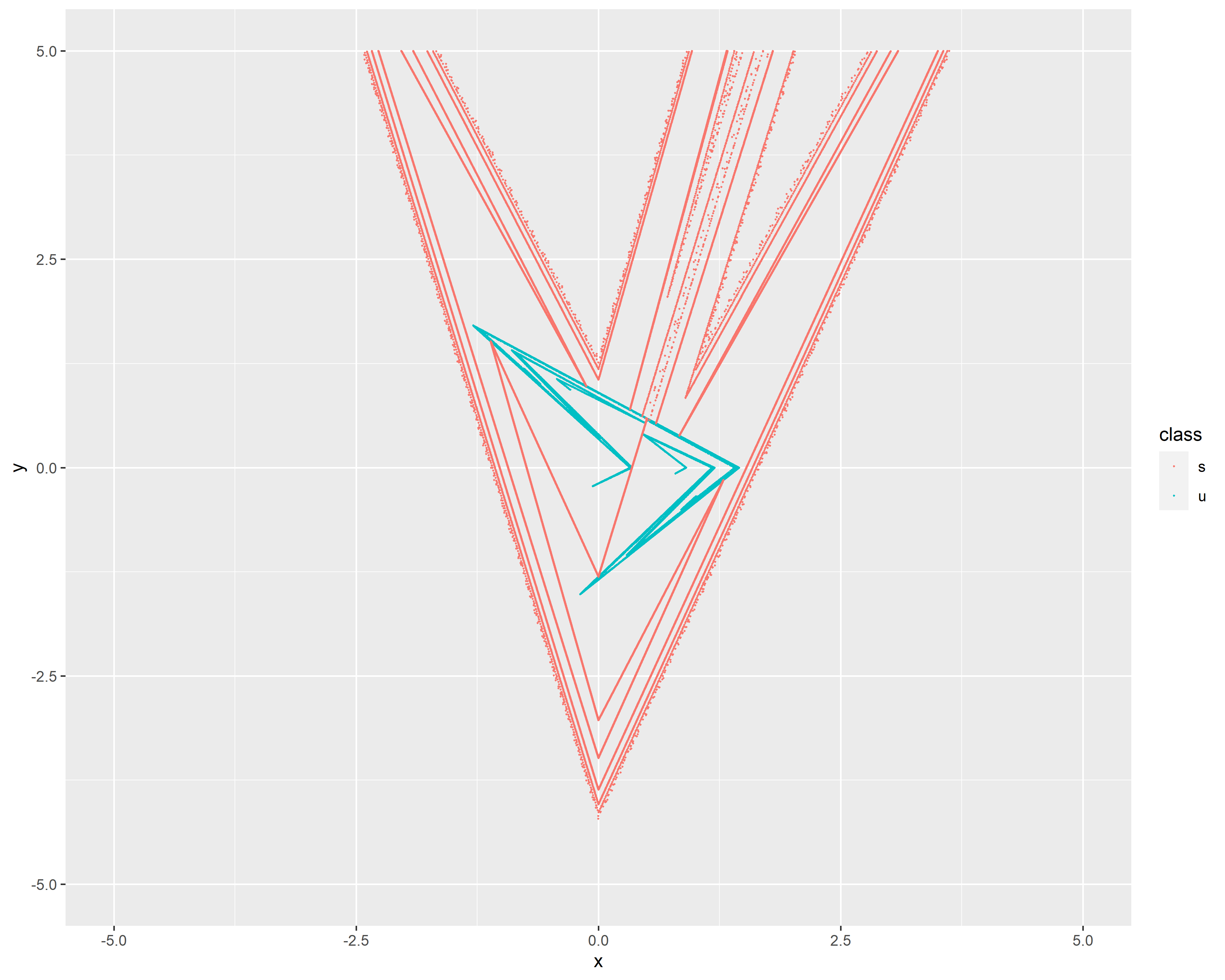} \\
			\raisebox{4cm}{(c)} \includegraphics[trim=6cm 4.5cm 6cm 6cm, clip=true,width=7cm,height=4.6cm]{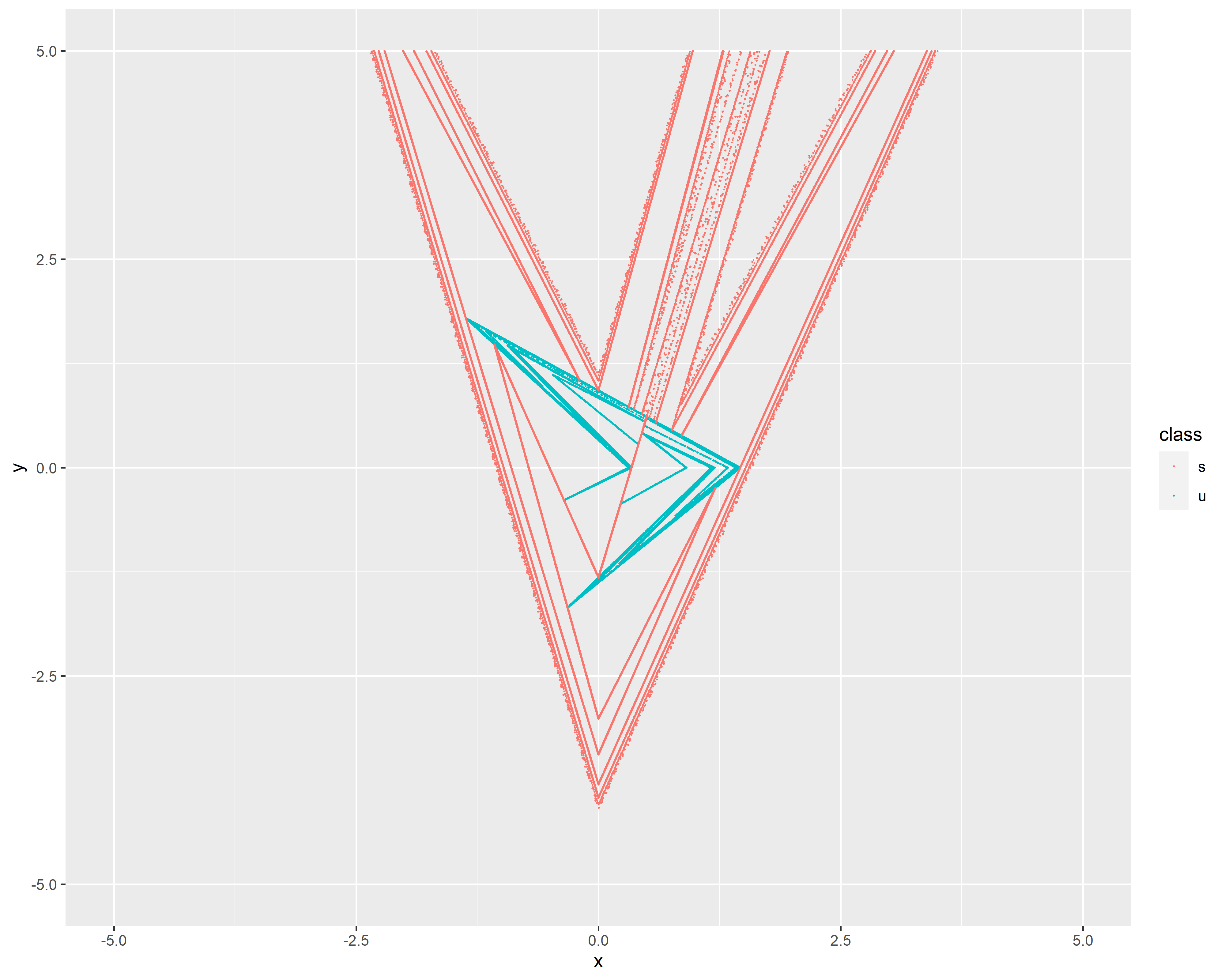} 
			& \raisebox{4cm}{(d)} \includegraphics[trim=6cm 6cm 6cm 5cm, clip=true,width=7cm,height=4.6cm]{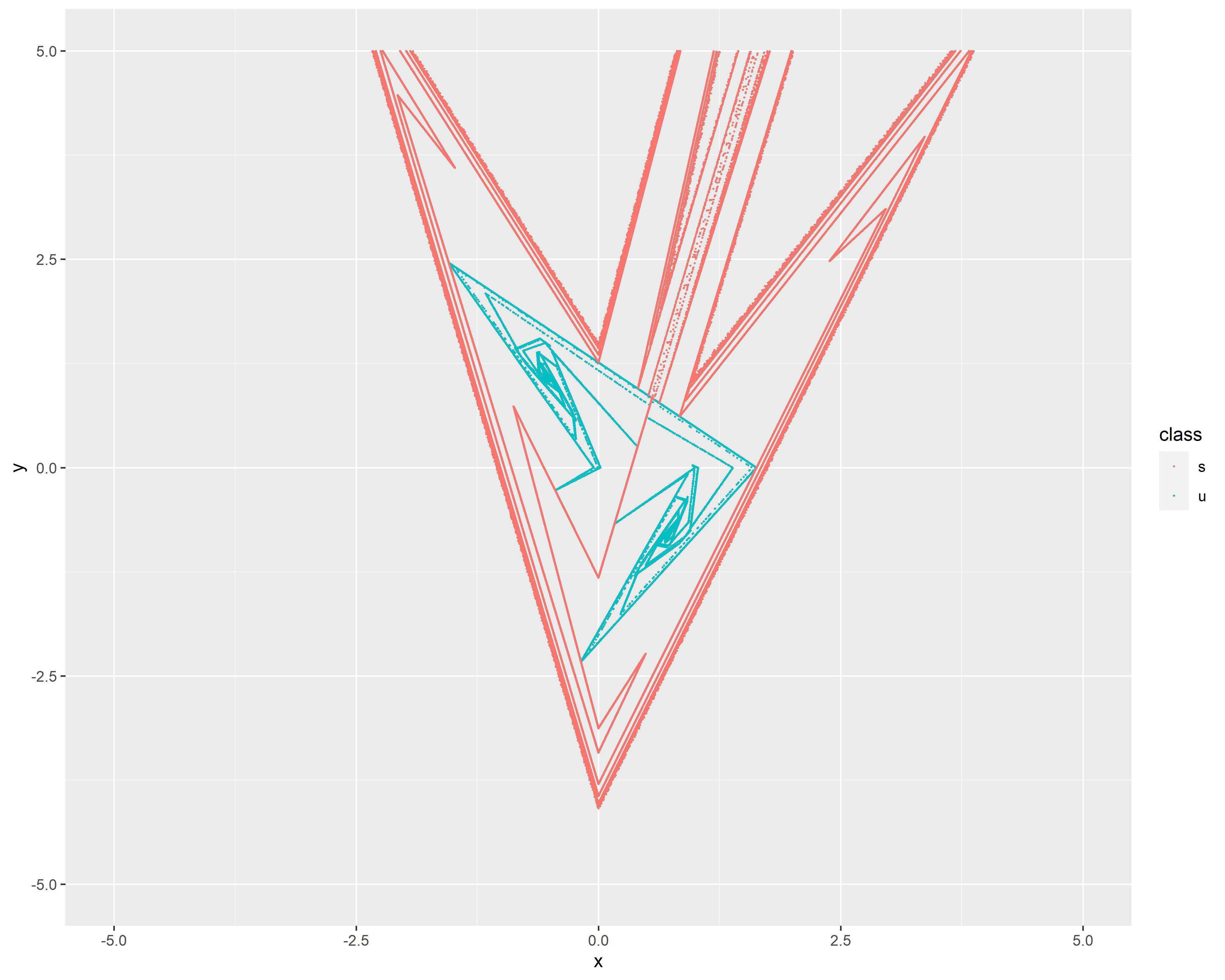} \\
			\raisebox{4cm}{(e)} \includegraphics[trim=6cm 3cm 6cm 3cm, clip=true,width=7cm,height=4.6cm]{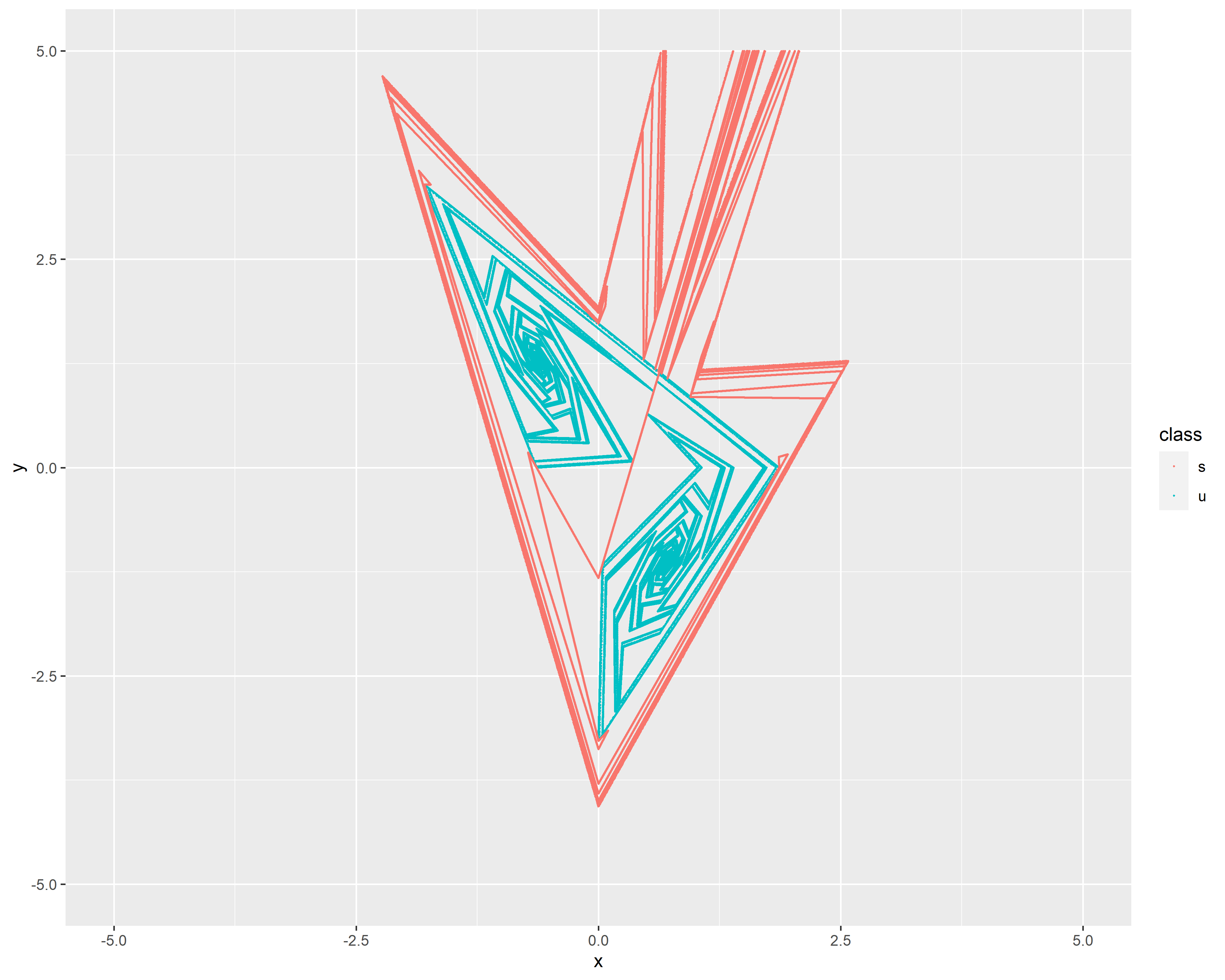} 
			& \raisebox{4cm}{(f)} \includegraphics[trim=4cm 3.5cm 4cm 2.5cm, clip=true,width=7cm,height=4.6cm]{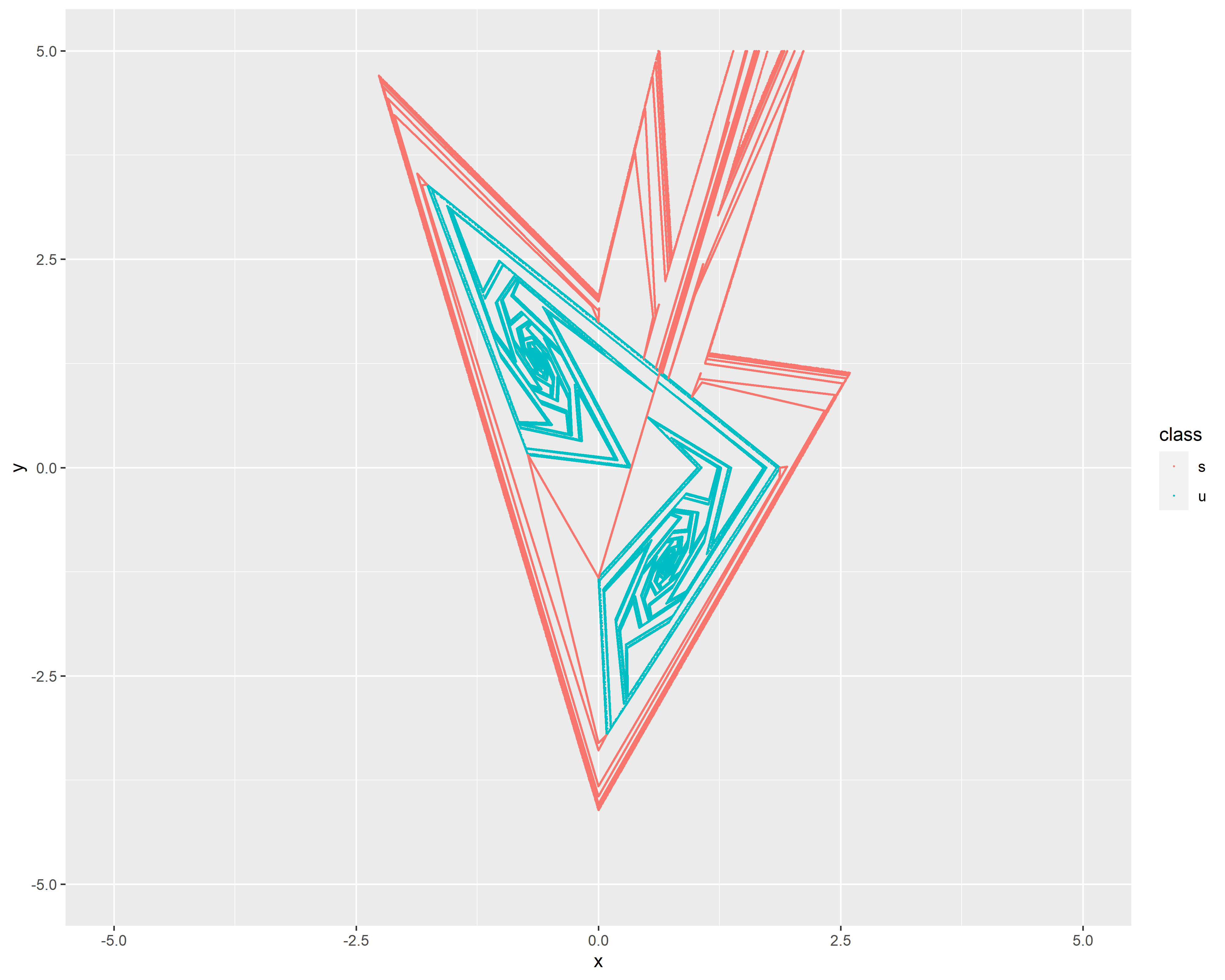} \\
			\raisebox{4cm}{(g)} \includegraphics[trim=6cm 4.5cm 6cm 3.5cm, clip=true,width=7cm,height=4.6cm]{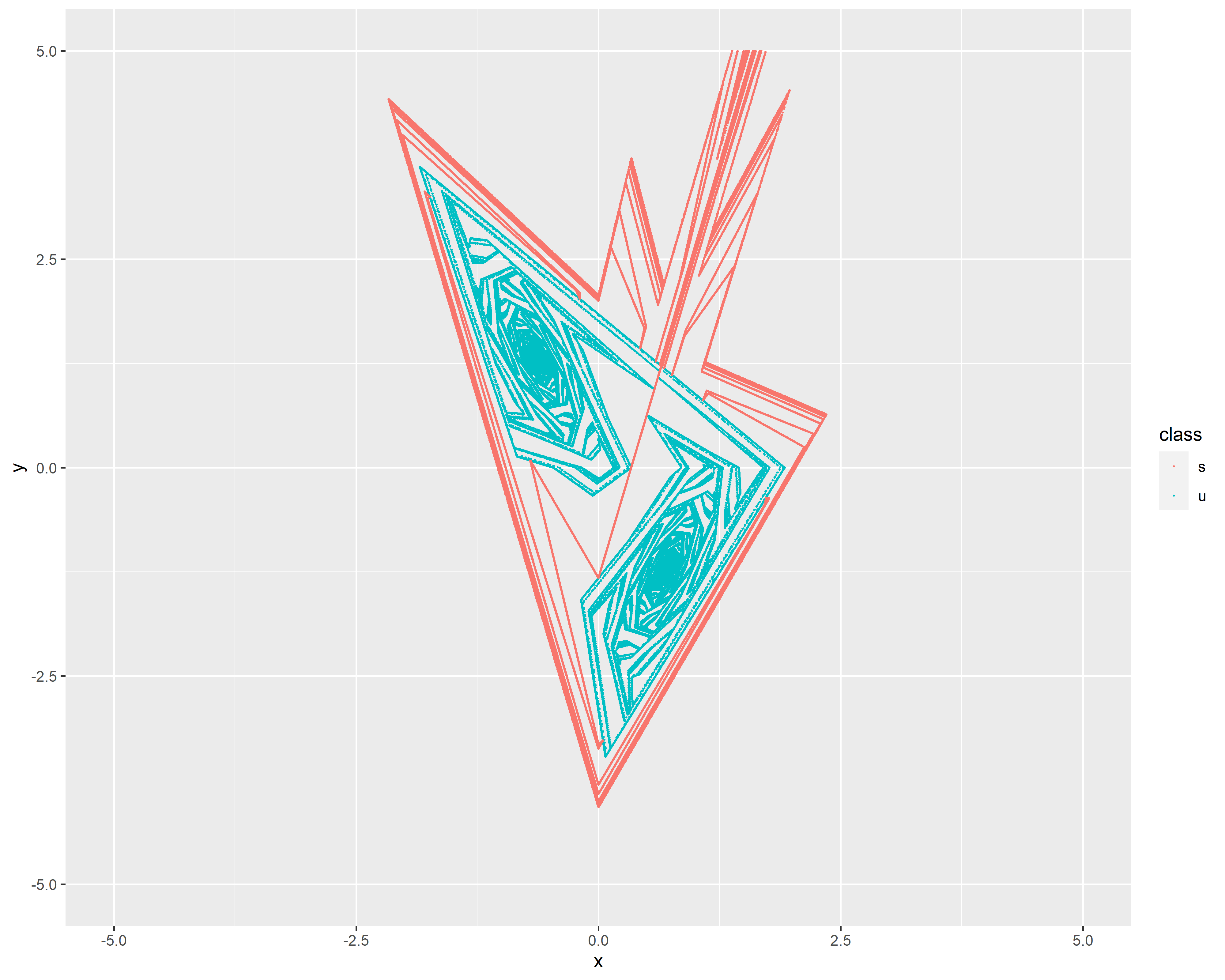}
			& \raisebox{4cm}{(h)} \includegraphics[trim=4cm 4cm 4cm 3cm, clip=true,width=7cm,height=4.6cm]{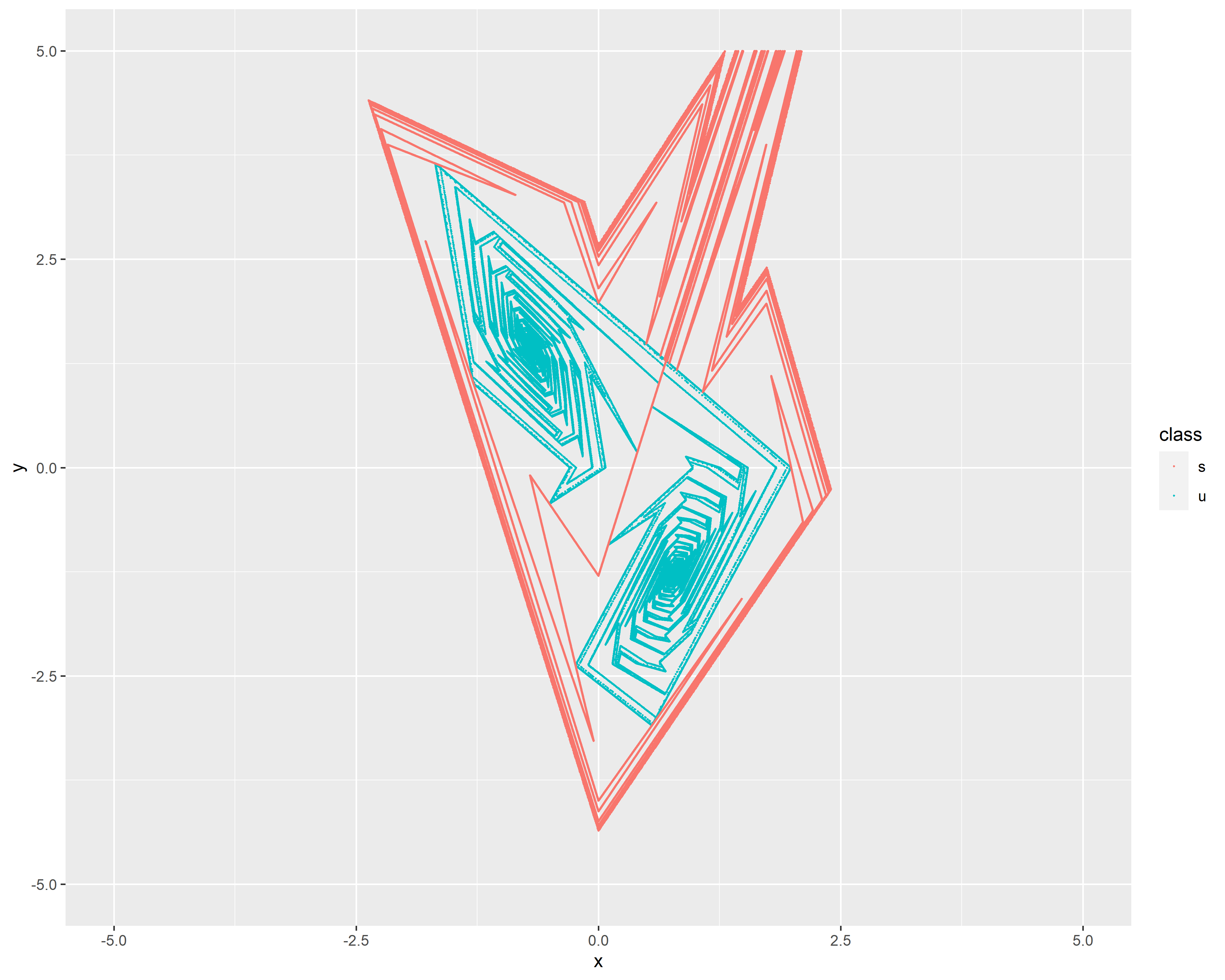}
		\end{tabular}
		\caption{$W_X^s$ (red) and $W_X^u$ (blue) for parameter pairs on the boundary curves: (a) curve $C_1$ ($a=1.46, b=0.332873$), (b) curve $C_2$ ($a=1.58, b=0.587775$), (c) endpoint $(a_2,b_2)$, (d) curve $C_3$ ($a=1.56, b=0.75378$), (e) endpoint $(a_3,b_3)$, (f) endpoint $(a_4,b_4)$, (g) curve $C_5$ ($a=1.48115, b=0.94$), (h) curve $C_6$ ($a=1.35, b=0.918178$).}
		\label{figure:homoclinic_border_WsWu_config} 
	\end{figure}

\newpage

	\appendix
	
	\section{Polynomials $P_n$ and $Q_n$ for $n=1,2,\ldots,11$}\label{appendix:curves_Cn_equations} 
	
	As already stated in Section \ref{sec:examples_bd_curves}, the equation of each boundary curve $C_n$ is obtained in the form $P_n(a,b)+Q_n(a,b)\sqrt{a^2+4b}=0$, where polynomials $P_n$ and $Q_n$ for $n=1,2,\ldots,11$ are as follows:
	
		\begin{align*}
		P_1 & (a,b) =a^3 - 4a, \\
		P_2 & (a,b) =4ab^5 - (8a^3 - 4)b^4 - (4a^5 + 8a^3 + 15a^2 + 4a + 4)b^3 \\
			  & \; + (15a^4 + 16a^3 + 11a^2)b^2 + (4a^7 + 2a^6 - 8a^5 - 10a^4)b - (2a^8 - 2a^6), \\
		P_3 & (a,b) =4b^3 + 3a^2b^2 - (a^4 + 6a^2 + 4a)b - (4a^4 + 4a^3 + 2a^2), \\
		P_4 & (a,b) =4ab^3 + (-9a^3 + 4a)b^2 + (2a^5 - 4a^3 + 4a)b + (4a^5 - 2a^3), \\
		P_5 & (a,b) =7a^2b^7 + (-11a^4 + 9a^2)b^6 + (3a^6 - 27a^4 + 11a^2 + 6a)b^5 + (20a^6 - 52a^4 - 35a^3)b^4 \\
			  & \; + (-4a^8 + 64a^6 + 56a^5)b^3 + (-28a^8 - 36a^7)b^2 + (4a^{10} + 10a^9)b - a^{11}, \\
		P_6 & (a,b) =4ab^3 + (a^3 - 4a)b^2 + (4a^3 - 4a)b + (-a^7 + 6a^5 - 6a^3 - 6a^2 - 4a), \\
		P_7 & (a,b) =4ab^{10} + (-2a^3 - 6a^2 + 4a)b^9 + (-16a^5 - 8a^4 - 12a^3 - 2a^2 + 4a)b^8 \\
			  & \; + (6a^7 + 10a^6 - 22a^5 + 2a^4 - 26a^3 - 16a^2 + 8a + 4)b^7 \\
			  & \; + (-2a^8 + 32a^7 + 10a^6 - 16a^5 + 46a^4 + 2a^3 - 11a^2 - 12a)b^6 \\
			  & \; + (-8a^9 - 16a^8 + 80a^7 + 72a^6 - 64a^5 - 10a^4 + 22a^3)b^5 \\
			  & \; + (4a^{10} - 48a^9 - 160a^8 - 16a^7 + 60a^6 + 68a^5)b^4 + (8a^{11} + 80a^{10} + 132a^9 - 6a^8 - 72a^7)b^3 \\
			  & \; + (-12a^{12} - 76a^{11} - 64a^{10} - 26a^9)b^2 + (12a^{13} + 38a^{12} + 32a^{11})b - (6a^{14}+6a^{13}), \\
		P_8 & (a,b) =(-4a + 4)b^6 + (-15a^3 + a^2 + 4a)b^5 + (-24a^5 - 13a^4 + 4a^3 - 2a^2 + 4a)b^4 \\
			  & \; + (-5a^7 - 19a^6 - 6a^5 + 2a^4 + 14a^3)b^3 + (6a^9 - 24a^7 + 4a^6 + 24a^5 + 8a^4 + 4a^3)b^2 \\
			  & \; + (2a^{11} + 2a^{10} - 8a^9 + 12a^8 + 8a^7 - 4a^6 + 4a^4 + 4a^3 + 2a^2)b \\
			  & \; + (4a^{10} - 4a^8 - 4a^7 - 4a^6 - 4a^5 - 2a^4), \\
		P_9 & (a,b) =-4b^6 + (17a^3 + 13a^2 - 4a)b^5 + (-4a^5 + 15a^4 + 8a^3 - 6a^2 - 4a)b^4 \\
			  & \; + (-13a^7 - 3a^6 + 26a^5 - 2a^4 - 2a^3 - 2a^2 - 4a)b^3 \\
			  & \; + (-2a^9 - 8a^8 + 8a^7 - 12a^6 + 8a^5 - 8a^3)b^2 \\
			  & \; + (2a^{11} + 2a^{10} - 8a^9 - 4a^8 + 8a^7 - 4a^6 - 8a^5 - 4a^4 - 4a^3 - 2a^2)b \\
			  & \; + (4a^{10} - 4a^8 + 4a^6 + 4a^5 + 2a^4), \\
		P_{10} & (a,b) =(-2a^2 + 4a)b^{12} + (-16a^4 - 14a^3 + 16a^2)b^{11} + (2a^6 - 30a^4 - 12a^3 + 16a^2 + 4a)b^{10} \\
				 & \; + (2a^8 + 2a^7 + 12a^6 + 6a^5 - 34a^4 + 6a^3 + 10a^2 + 4a)b^9 \\ 
				 & \; + (2a^8 + 4a^7 + 52a^6 + 12a^5 - 58a^4 - 14a^3 - 14a^2 - 12a + 4)b^8 \\
				 & \; + (-2a^{10} - 2a^9 - 10a^8 + 122a^6 + 168a^5 + 102a^4 + 88a^3 + 5a^2 + 16a - 4)b^7 \\
				 & \; + (-6a^{10} - 8a^9 - 74a^8 - 240a^7 - 226a^6 - 198a^5 - 84a^4 - 93a^3 + a^2 + 4a + 4)b^6 \\
				 & \; + (2a^{12} + 2a^{11} + 94a^9 + 260a^8 + 262a^7 + 116a^6 + 22a^5 + 77a^4 + 20a^3 - 21a^2 - 20a - 4)b^5 \\
				 & \; + (10a^{12} + 14a^{11} - 118a^{10} - 230a^9 - 160a^8 + 110a^7 - 24a^6 - 156a^5 - 49a^4 \\
				 & \quad + 81a^3 + 37a^2 + 8a)b^4 \\
				 & \; + (-2a^{14} - 14a^{13} + 92a^{11} + 137a^{10} - 58a^9 - 160a^8 + 84a^7 + 205a^6 - 8a^5 - 102a^4 - 40a^3)b^3 \\
				 & \; + (2a^{15} + 12a^{14} - 4a^{13} - 44a^{12} - 25a^{11} + 105a^{10} + 82a^9 - 70a^8 - 79a^7 - 5a^6 + 6a^5)b^2 \\
				 & \; + (-2a^{16} - 6a^{15} + a^{14} + 20a^{13} - 7a^{12} - 68a^{11} - 39a^{10} + 52a^9 + 59a^8 + 18a^7)b \\
				 & \; + (a^{17} + a^{16} - 3a^{15} - 3a^{14} + 12a^{13} + 13a^{12} - 9a^{11} - 15a^{10} - 5a^9), \\
		P_{11} & (a,b) =(8a - 4)b^6 + (-7a^3 + 5a^2 - 4a)b^5 + (-4a^5 + 34a^4 + 16a^3 + 2a^2 - 4a)b^4 \\
				 & \; + (-7a^7 - 9a^6 + 40a^5 - 32a^4 - 2a^3 + 6a^2 - 4a)b^3 \\
				 & \; + (-5a^8 - 4a^7 - 26a^6 + 16a^5 - 6a^4 - 16a^3)b^2 \\
				 & \; + (a^{11} + a^{10} - 6a^9 + 6a^8 + 6a^7 - 14a^6 - 6a^5 + 8a^4 + 4a^3)b \\
				 & \; + (4a^{10} - 4a^8 + 4a^6 + 4a^5 + 4a^4 + 4a^3 + 2a^2), \\
			   &   \\
		Q_1 & (a,b) =a^2 - 2b, \\
		Q_2 & (a,b) =(-4a^4 + 4a^2 - a)b^3 - (8a^4 + a^3 - 3a)b^2 + (4a^6 + 6a^5 - 6a^3)b - (2a^7 - 2a^5), \\
		Q_3 & (a,b) =-3ab^2 - (a^3 - 2a)b + (4a^3 + 4a^2 + 2a), \\
		Q_4 & (a,b) =2b^3 - 5a^2b^2 + (2a^4 + 4a^2)b + (-4a^4 + 2a^2), \\
		Q_5 & (a,b) =ab^7 + (-3a^3 + a)b^6 + (a^5 - a^3 + a + 1)b^5 - 9a^2b^4 + 24a^4b^3 - 22a^6b^2 + 8a^8b - a^{10}, \\
		Q_6 & (a,b) =-2b^3 + 3a^2b^2 + 2a^4b + (-a^6 - 2a^4 + 2a^2 + 2a), \\
		Q_7 & (a,b) =(-2a^2 + 2a)b^9 + (-4a^4 + 2a)b^8 + (2a^6 - 6a^5 - 2a^4 - 6a^3 + 2a^2 + 4a)b^7 \\
			  & \; + (2a^7 - 10a^5 - 14a^3 - 10a^2 - 3a + 2)b^6 + (16a^7 - 8a^5 + 16a^4 + 6a^3 - 2a^2)b^5 \\
			  & \; + (-4a^9 + 40a^7 + 32a^6 + 4a^5 - 12a^4)b^4 + (-24a^9 - 60a^8 - 26a^7 + 4a^6)b^3 \\
			  & \; + (4a^{11} + 28a^{10} + 32a^9 + 18a^8)b^2 + (-4a^{12}-14a^{11}-12a^{10})b + (2a^{13} + 2a^{12}), \\
		Q_8 & (a,b) =2b^6 + (-a^2 - 3a)b^5 + (-10a^4 - 7a^3 + 2a)b^4 + (-13a^6 - 7a^5 + 2a^4 + 2a^3 - 2a^2)b^3 \\
			  & \; + (2a^8 + 4a^7 - 4a^5 + 8a^3 + 4a^2)b^2 + (2a^{10} + 2a^9 - 12a^7 + 12a^5 + 8a^4 + 4a^3 + 4a^2 + 2a)b \\
			  & \; + (-4a^9 + 4a^7 + 4a^6 + 4a^5 + 4a^4 + 2a^3), \\
		Q_9 & (a,b) =-2b^6 + (7a^2 + a)b^5 + (10a^4 + 5a^3 - 4a^2 - 2a)b^4 \\
			  & \; + (-5a^6 - 7a^5 + 2a^4 + 6a^3 - 2a^2 - 2a)b^3 + (-6a^8 - 4a^7 + 12a^5)b^2 \\
			  & \; + (2a^{10} + 2a^9 + 4a^7 + 4a^5 - 4a^3 - 4a^2 - 2a)b + (-4a^9 + 4a^7 - 4a^5 - 4a^4 - 2a^3), \\ 
		Q_{10} & (a,b) =2ab^{12} + (-8a^3 - 2a^2)b^{11} + (-2a^5 - 4a^4 - 2a^3 + 8a^2)b^{10} \\
				 & \; + (2a^7 + 2a^6 - 4a^5 - 14a^4 + 6a^3 + 10a^2 + 2a)b^9 \\
				 & \; + (6a^7 + 8a^6 - 4a^5 - 20a^4 + 10a^3 - 2a^2 + 2a + 2)b^8 \\
				 & \; + (-2a^9 - 2a^8 + 14a^7 + 28a^6 - 6a^5 + 24a^4 + 18a^3 - 9a - 2)b^7 \\
				 & \; + (-10a^9 - 12a^8 + 26a^7 - 4a^6 - 94a^5 - 90a^4 + 40a^3 + 17a^2 + 13a)b^6 \\
				 & \; + (2a^{11} + 2a^{10} - 32a^9 - 58a^8 + 100a^7 + 218a^6 + 36a^5 - 42a^4 - 109a^3 - 12a^2 - 3a + 2)b^5 \\
				 & \; + (14a^{11} + 54a^{10} - 6a^9 - 184a^8 - 200a^7 - 22a^6 + 112a^5 + 64a^4 + 35a^3 - 7a^2 - 3a - 2)b^4 \\
				 & \; + (-2a^{13} - 18a^{12} - 36a^{11} + 44a^{10} + 163a^9 + 108a^8 + 48a^7 - 48a^6 - 101a^5 \\
				 & \quad - 38a^4 + 18a^3 + 12a^2)b^3 \\
				 & \; + (2a^{14} + 16a^{13} + 14a^{12} - 40a^{11} - 83a^{10} - 91a^9 - 14a^8 + 62a^7 + 75a^6 + 27a^5 + 4a^4)b^2 \\
				 & \; + (-2a^{15} - 8a^{14} - a^{13} + 26a^{12} + 31a^{11} + 20a^{10} - a^9 - 42a^8 - 39a^7 - 12a^6)b \\
				 & \; + (a^{16} + a^{15} - 3a^{14} - 3a^{13} - 4a^{12} - 3a^{11} + 7a^{10} + 9a^{9} + 3a^{8}), \\
		Q_{11} & (a,b) =-2b^6 + (5a^2 + 5a)b^5 + (14a^4 - 6a^3 - 2a)b^4 + (-9a^6 - 9a^5 - 12a^4 + 8a^3 + 2a^2 - 2a)b^3 \\
				 & \; + (-2a^8 + a^7 + 26a^5 - 4a^4 - 2a^3 + 4a^2)b^2 + (a^{10} + a^9 + 2a^8 - 2a^7 - 2a^6 + 10a^5 \\
				 & \quad + 2a^4 - 8a^3 - 4a^2)b \\
				 & \; + (-4a^9 + 4a^7 - 4a^5 - 4a^4 - 4a^3 - 4a^2 - 2a). \\
		\end{align*}

\end{document}